\newtheorem{thm}{Theorem}[section]
\newtheorem{cor}[thm]{Corollary}
\newtheorem{prop}[thm]{Proposition}
\newtheorem{lem}[thm]{Lemma}
\newenvironment{dfn}{\medskip\refstepcounter{thm}
\noindent{\bf Definition \thesection.\arabic{thm}\ }}{\medskip}
\newenvironment{ex}{\medskip\refstepcounter{thm}
\noindent{\bf Example \thesection.\arabic{thm}\ }}{\medskip}
\newenvironment{proof}[1][,]{\medskip\ifcat,#1
\noindent{{\it Proof}:\ }\else\noindent{\it Proof of #1.\ }\fi}
{\hfill$\square$\medskip}
\newenvironment{remark}[1][Remark]{\begin{trivlist}
\item[\hskip \labelsep {\bfseries #1}]}{\end{trivlist}}
\newenvironment{remarks}[1][Remarks]{\begin{trivlist}
\item[\hskip \labelsep {\bfseries #1}]}{\end{trivlist}}
\newenvironment{note}[1][Note]{\begin{trivlist}
\item[\hskip \labelsep {\bfseries #1}]}{\end{trivlist}}
\newenvironment{notes}[1][Notes]{\begin{trivlist}
\item[\hskip \labelsep {\bfseries #1}]}{\end{trivlist}}
\newenvironment{ack}[1][Acknowledgements]{\begin{trivlist}
\item[\hskip \labelsep {\bfseries #1}]}{\end{trivlist}}
\newenvironment{qnA}[1][Question A]{\begin{trivlist}
\item[\hskip \labelsep {\bfseries #1}]}{\end{trivlist}}
\newenvironment{qnB}[1][Question B]{\begin{trivlist}
\item[\hskip \labelsep {\bfseries #1}]}{\end{trivlist}}
\newenvironment{qnC}[1][Question C]{\begin{trivlist}
\item[\hskip \labelsep {\bfseries #1}]}{\end{trivlist}}
\def\O{{\mathbb O}}
\def\C{{\mathbb C}}
\def\H{{\mathbb H}}
\def\Im{\mathop{\rm Im}\nolimits} 
\def\Re{\mathop{\rm Re}\nolimits} 
\def\R{{\mathbb R}}
\DeclareMathOperator\Spin{Spin}
\DeclareMathOperator\SU{SU}
\DeclareMathOperator\GG{G}
\DeclareMathOperator\II{II}
\DeclareMathOperator\III{III}
\def\spin7{\mathfrak{spin}(7)}
\def\bfe{{\mathbf e}}
\def\bff{{\mathbf f}}
\def\g2{{\mathfrak g}_2}
\def\bfx{{\mathbf x}}
\def\bfu{{\mathbf u}}
\def\bfv{{\mathbf v}}
\def\bft{{\mathbf t}}
\def\bfn{{\mathbf n}}
\def\bfb{{\mathbf b}}
\def\P{{\mathbb P}}
\DeclareMathOperator\U{U}
\DeclareMathOperator\GL{GL}
\def\gg{{\rm g}}
\def\hh{{\rm h}}
\def\d{{\rm d}}
\def\w{\wedge}
\def\eq#1{{\rm(\ref{#1})}}
\DeclareMathOperator\SO{SO}
\def\Z{{\mathbb Z}}
\DeclareMathOperator\Tr{Tr}
\DeclareMathOperator\vol{vol}
\DeclareMathOperator\Gr{Gr}
\DeclareMathOperator\Id{Id}
\DeclareMathOperator\End{End}
\DeclareMathOperator\MM{M}
\begin{document}

\title{Associative Submanifolds of the 7-Sphere}
\author{Jason D. Lotay\footnote{Address for correspondence: Department of Mathematics, University College London, Gower Street, London WC1E 6BT, England, U.K. Email: j.lotay@ucl.ac.uk.}\\ {\normalsize Imperial College London}}

\date{}

\maketitle

\begin{abstract}
\noindent Associative submanifolds of the 7-sphere $\mathcal{S}^7$ are 3-dimensional minimal submanifolds which 
are the links of calibrated 4-dimensional cones in $\R^8$ called Cayley cones.  Examples of associative 3-folds are thus 
given by the links of complex and special Lagrangian cones in $\C^4$, as well as Lagrangian submanifolds of the 
nearly K\"ahler 6-sphere.  By classifying the associative group 
orbits, we exhibit the first known explicit example of an associative 3-fold in $\mathcal{S}^7$ which does not arise from 
 other geometries. 
 We then study associative 3-folds satisfying the curvature constraint known as Chen's equality, 
which is equivalent to a natural pointwise condition on the second fundamental form, and describe them 
using a new family of pseudoholomorphic curves in the Grassmannian of 2-planes in $\R^8$ and isotropic
 minimal surfaces in $\mathcal{S}^6$.   We also prove that 
 associative 3-folds which are ruled by geodesic circles, like minimal surfaces in space forms, admit families of local isometric deformations.  
 Finally, we construct associative 3-folds satisfying Chen's equality which have an 
 $\mathcal{S}^1$-family of global isometric deformations using harmonic 2-spheres in $\mathcal{S}^6$.
\end{abstract}

\section[Introduction]{Introduction}\label{intro}



\noindent Associative submanifolds of the 7-sphere $\mathcal{S}^7$ are 3-dimensional \emph{minimal} submanifolds 
  which are related to the $\GG_2$ structure that $\mathcal{S}^7$ naturally inherits from the standard $\Spin(7)$ structure 
on $\R^8$.  
Since $\GG_2$ acts irreducibly on $\R^7$, we are considering $\mathcal{S}^7$ with its standard constant curvature 1 metric.   
Associative submanifolds were introduced by Harvey and Lawson 
in \cite{HarLaw} in their seminal work on \emph{calibrated geometries} as 3-dimensional submanifolds of 7-manifolds endowed
 with a $\GG_2$ structure that are calibrated by the 3-form defining the $\GG_2$ structure.  In the case of $\mathcal{S}^7$, 
 associative submanifolds are in fact the links of calibrated 4-dimensional cones in $\R^8$ known as \emph{Cayley} cones.  

Cayley submanifolds of $\R^8$ include complex surfaces and special Lagrangian 4-folds in $\C^4$, 
as well as calibrated submanifolds in $\R^7$ called coassociative 4-folds.
 Examples of associative 3-folds in $\mathcal{S}^7$ are thus provided by well-known classes of submanifolds, 
including \emph{Lagrangian} submanifolds of the nearly K\"ahler 6-sphere, Hopf lifts of \emph{holomorphic curves} in $\C\P^3$ and \emph{minimal Legendrian} submanifolds of $\mathcal{S}^7$.  It is also straightforward to construct associative 3-folds in 
 $\mathcal{S}^7$ using \emph{pseudoholomorphic curves} in the nearly K\"ahler $\mathcal{S}^6$.

\subsection{Motivation and summary}

In this article, we study various aspects of the geometry of associative 3-folds in $\mathcal{S}^7$, motivated by
three natural questions.  

\medskip

Our first question is the following.

\begin{qnA}
Are there examples of associative 3-folds in $\mathcal{S}^7$ which do not arise from other known geometries?
\end{qnA}

The local generality of associative 3-folds in $\mathcal{S}^7$ is greater than the other submanifolds we have listed, so Question A
is only challenging as a global question.  Answering this question is surprisingly difficult -- for example, if one searches for constant 
curvature associative 3-folds, work by the author shows that they are homogeneous and either Lagrangian or minimal 
Legendrian examples which were already well-known.  However, here we provide the first known explicit example of such an associative 3-fold 
in $\mathcal{S}^7$.  We achieve this during the following classification of associative group orbits, which in itself is an important 
problem to solve, building upon the study of Lagrangian group orbits in $\mathcal{S}^6$ in \cite{Mashimo} and 
minimal Legendrian group orbits in $\mathcal{S}^7$ which forms part of the work in \cite{Marshall}.  

\begin{thm}\label{fullthm}
Let $A$ be a connected associative 3-fold in $\mathcal{S}^7$ which is the orbit of a closed 3-dimensional Lie subgroup of\/ 
$\Spin(7)$.  Suppose further that $A$ does not lie in a totally geodesic $\mathcal{S}^6$.  Then, up to rigid motion, $A$ is either
\begin{itemize}
\item[\emph{(a)}] $A_1\cong \U(1)^3$ given by Example \ref{u1ex}, 
\item[\emph{(b)}] $A_2
\cong\SU(2)/\Z_3$ 
given by Example
 \ref{su2ex1}, or 
\item[\emph{(c)}] $A_3\cong\SU(2)$ given by Example \ref{su2ex2}.
\end{itemize}  
\end{thm}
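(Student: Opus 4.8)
The plan is to reduce the problem to Lie theory and then impose the associative condition pointwise. Since $\Spin(7)$ is compact, any closed $3$-dimensional subgroup $G$ is compact, and a compact connected $3$-dimensional Lie group is isomorphic to $\U(1)^3$, $\SU(2)$ or $\SO(3)$; at the level of Lie algebras $\mathfrak{g}=\mathrm{Lie}(G)\subseteq\spin7$ is therefore either abelian (a Cartan subalgebra $\R^3$) or $\mathfrak{su}(2)$. Because the $\GG_2$ structure, and hence the associative condition, is $\Spin(7)$-invariant, and because $\Spin(7)$ acts by rigid motions, it suffices to classify $\mathfrak{g}$ up to conjugacy in $\Spin(7)$ and then, for each conjugacy class, to determine which orbits $G\cdot p$ are associative. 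Since $\dim G=3$ and $A=G\cdot p$ is a $3$-fold, the stabilizer of $p$ is finite and $A\cong G/H$, which is consistent with the quotients $\U(1)^3$, $\SU(2)/\Z_3$ and $\SU(2)$ appearing in the statement.

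In the abelian case $\mathfrak{g}$ is a Cartan subalgebra of $\spin7$. As $\Spin(7)$ has rank $3$, every $3$-torus is maximal, so all are conjugate and I may fix one. I would then decompose the $8$-dimensional spin representation into its weight spaces, write the orbit $G\cdot p$ and its tangent space $\mathfrak{g}\cdot p$ explicitly in terms of the weights and the components of $p$, and impose that $\mathfrak{g}\cdot p$ be an associative $3$-plane using the $\GG_2$ three-form. The non-degeneracy hypothesis, that $A$ is not contained in a totally geodesic $\mathcal{S}^6$, forces $p$ to have nonzero component in each relevant weight space (equivalently, the subrepresentation generated by $p$ is all of $\R^8$), which pins down the base point up to the torus action and yields exactly the flat orbit $A_1\cong\U(1)^3$ of Example \ref{u1ex}.

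The non-abelian case is where the real work lies. Here $\mathfrak{g}\cong\mathfrak{su}(2)$, and classifying its conjugacy classes in $\spin7$ amounts to listing the inequivalent ways the real $8$-dimensional spin representation branches under $\SU(2)$, that is, writing it as a sum of irreducible $\SU(2)$-modules (built from the symmetric powers of $\C^2$) whose real dimensions total $8$ and which actually occur in the restriction. For each admissible embedding I would again express $\mathfrak{g}\cdot p$ explicitly, discard those embeddings for which every orbit lies in a totally geodesic $\mathcal{S}^6$ (detected by the presence of a trivial $\SU(2)$-summand meeting $p$), and then solve the associative condition for the base point $p$. I expect this to be the main obstacle: the branching analysis produces several candidate embeddings, and for each the associative condition becomes a system of algebraic equations on the components of $p$ whose solution set must be computed and then organised into $\SU(2)$-orbits. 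Carrying this out should leave precisely the two families $A_2\cong\SU(2)/\Z_3$ and $A_3\cong\SU(2)$ of Examples \ref{su2ex1} and \ref{su2ex2}, with the finite stabilizers $\Z_3$ and $\{1\}$ emerging from the isotropy of the surviving base points.

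Throughout I would use the structure equations of $\S$\ref{struct} both to phrase the associative condition on $\mathfrak{g}\cdot p$ invariantly and to confirm, via the induced adapted frame, that the three resulting orbits are genuinely distinct non-congruent associative $3$-folds. A final check that each orbit is connected and that no further rigid motion identifies two of them then completes the classification.
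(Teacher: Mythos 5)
Your proposal follows essentially the same route as the paper: reduce to the three compact group types, classify the embeddings into $\Spin(7)$ via the branching of the $8$-dimensional representation under $\SU(2)$ (the paper finds $\rho_4\oplus\rho_4$ and $\rho_8$ as the only admissible non-abelian decompositions, plus the irreducible maximal torus), and then impose the associative condition on the tangent space $\mathfrak{g}\cdot p$ at a base point. The one caveat is that the diagonal $\rho_4\oplus\rho_4$ embedding is not eliminated by the presence of a trivial summand --- its associative orbits turn out to be totally geodesic $\mathcal{S}^3$'s, which are excluded only because they happen to lie in a totally geodesic $\mathcal{S}^6$ --- so that case still requires the explicit orbit computation, exactly as in the paper's Proposition \ref{su2action1prop}.
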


\begin{note}
The example $A_3$ does not arise from other known geometries, thus providing a concrete positive answer to Question A.  The geometry of the associative 3-fold $A_3$ has certainly be identified earlier, at least implicitly, from other viewpoints.  In particular, $A_3$ can be realised as a solution to a system of first order ordinary differential equations by applying the construction methods of \cite{Lotaysym}. 
\end{note}

\noindent Since the subgroup of $\Spin(7)$ fixing a real direction in $\R^8$ is isomorphic to $\GG_2$, if $A$ is a homogeneous 
associative 3-fold lying in a totally geodesic $\mathcal{S}^6$, then it arises as the orbit of 
 a subgroup of $\GG_2$.  Since such associative 3-folds are Lagrangian (or \emph{totally real}) 
submanifolds of $\mathcal{S}^6$, they are classified by  \cite[Theorem 4.4]{Mashimo}.

\medskip

Our second question is motivated by work on minimal submanifolds.

\begin{qnB}
Is there rigidity for associative 3-folds in $\mathcal{S}^7$ satisfying certain curvature conditions; that is, 
is there a classification up to rigid motions?
\end{qnB}

Work by the author shows that there is a unique (up to rigid motion) associative 3-fold in $\mathcal{S}^7$ with a given constant
 curvature.  Moreover, one can show that under certain pinching constraints on scalar 
or sectional curvature an associative 3-fold has to be totally geodesic.  In this article we focus on associative 3-folds satisfying the curvature condition known as \emph{Chen's equality}, which is 
 defined for submanifolds of manifolds with constant curvature.  
Submanifolds satisfying Chen's equality have been studied by a number 
of authors, in particular in \cite{Chen} and \cite{Dillen2}.
 We show that an associative 3-fold satisfying Chen's equality is 
 \emph{austere}: a notion defined in \cite{HarLaw} because of the relationship between austere submanifolds and special Lagrangian 
 geometry.  
Austere 3-folds, which are defined as 3-dimensional submanifolds whose second fundamental form satisfies a pointwise algebraic
 condition, are studied for example in \cite{Bryantaustere} 
and \cite{Ikawa}.  

The constant curvature and austere 
associative 3-folds are certain to play an important role in classifying the so-called \emph{second order families} of associative
 3-folds in $\mathcal{S}^7$; i.e.~those associative 3-folds whose second fundamental form 
satisfies a natural pointwise condition.   Studies of second order families of special submanifolds were started in \cite{BryantSL}
 and have continued in \cite{Foxthesis}, \cite{IonelSL} and \cite{LotayLag}.

  Associative 3-folds satisfying Chen's equality are also 
examples of \emph{ruled} submanifolds:~submanifolds which are fibered by oriented geodesic circles in $\mathcal{S}^7$.  Ruled 
associative 3-folds $\mathcal{S}^7$ are studied in \cite{Fox}, where it is shown  that a generic ruled associative 3-fold can be constructed from a minimal 
 surface $\Sigma$ in $\mathcal{S}^6$ and a holomorphic curve $\Gamma$ in a $\C\P^1$-bundle over $\Sigma$.  The curve 
$\Gamma$ defines an immersion of $\Sigma$ in the Grassmannian $\Gr_+(2,8)$ of oriented 2-planes in $\R^8$
 as a \emph{pseudoholomorphic curve} with respect to a $\Spin(7)$-invariant almost complex structure. 
 We identify a new distinguished class of pseudoholomorphic
 curves in $\Gr_+(2,8)$, which we call \emph{linear} curves, and  prove the following, which 
answers Question B in the case of Chen's equality.  

\begin{thm}\label{mainChenthm}
Let $A$ be an associative 3-fold in $\mathcal{S}^7$ which satisfies Chen's equality.  Then either
\begin{itemize}
\item[\emph{(a)}] $A$ is the Hopf lift of a holomorphic curve in $\C\P^3$ or
\item[\emph{(b)}] $A$ is constructed from a  
 minimal surface $\Sigma$ in $\mathcal{S}^6$ as in Example \ref{ruledex}, where $\Sigma$ is isotropic and the pseudoholomorphic lift of $\Sigma$ in $\Gr_+(2,8)$ is linear.
\end{itemize}
\end{thm}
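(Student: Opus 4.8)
The plan is to deduce from Chen's equality that $A$ is ruled by oriented geodesic circles, so that Theorem \ref{mainruledthm} supplies the basic dichotomy, and then to extract the remaining content of Chen's equality to pin down the finer structure in case (b). First I would recall the pointwise meaning of Chen's equality for a minimal $3$-fold. Writing $h$ for the second fundamental form of $A$ in $\mathcal{S}^7$ and $A_\xi$ for the shape operator in a normal direction $\xi$, minimality gives $\Tr A_\xi = 0$ for every $\xi$, and Chen's equality holds exactly when there is a unit tangent vector field $e_3$ annihilated by every shape operator, that is
$$ h(e_3, X) = 0 \quad\text{for all tangent } X. $$
Equivalently, Chen's equality amounts to the relative nullity of $A$ in $\mathcal{S}^7$ being at least one-dimensional, with $e_3$ a nullity direction; on the complementary plane $\langle e_1, e_2\rangle$ the shape operators then take the standard trace-free Chen form. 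On the open set where the nullity is exactly one-dimensional the line field $\langle e_3\rangle$ is integrable and its leaves are totally geodesic in $\mathcal{S}^7$, and since every geodesic of $\mathcal{S}^7$ is a great circle, these leaves are oriented geodesic circles.

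Consequently $A$ is ruled by oriented geodesic circles, so Theorem \ref{mainruledthm} applies: either $A$ is the Hopf lift of a holomorphic curve in $\C\P^3$, which is precisely case (a) and needs no further argument, or $A$ is built from a minimal surface $\Sigma$ in $\mathcal{S}^6$ and a holomorphic curve $\Gamma$ as in Example \ref{ruledex}. The ruling supplied by Theorem \ref{mainruledthm} agrees with the relative-nullity foliation: both foliate $A$ by great circles, and the tangent to such a circle is forced to be $e_3$. Loci where the nullity jumps correspond to totally geodesic pieces and fall under case (a).

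The substance is then case (b). I would set up the adapted frame of $\S$\ref{struct} and $\S$\ref{ruledsection} with $e_3$ tangent to the ruling circles and $e_1, e_2$ spanning the horizontal complement encoding the base surface $\Sigma$ and the infinitesimal motion of the circle in the space $\mathcal{C}$. Because the circles are geodesics of $\mathcal{S}^7$ we automatically have $h(e_3, e_3) = 0$, so the additional input of Chen's equality is exactly the pair of conditions
$$ h(e_3, e_1) = 0, \qquad h(e_3, e_2) = 0. $$
Through the structure equations these two normal-valued quantities can be expressed in terms of the second (and higher) fundamental form of $\Sigma$ in $\mathcal{S}^6$ together with the derivatives of the pseudoholomorphic lift of $\Sigma$ in $\mathcal{C}$, that is, the map sending a point of $\Sigma$ to its ruling circle.

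The main obstacle is exactly this translation. I expect one of the two conditions to be equivalent to $\Sigma$ being isotropic, a restriction on how the second fundamental form of $\Sigma$ interacts with the nearly K\"ahler almost complex structure on $\mathcal{S}^6$, and the other to be equivalent to linearity of the lift, which is why ``linear'' is defined as it is in $\S$\ref{chens}. The delicate point is to show that these geometric conditions reproduce $h(e_3, e_1) = h(e_3, e_2) = 0$ and nothing stronger: one must separate the part of the variation of the ruling circle governed by the extrinsic geometry of $\Sigma$ from the part governed by the lift, and check that the $\Spin(7)$-invariant almost complex structure on $\mathcal{C}$ enters so that ``pseudoholomorphic and linear'' is precisely the residual constraint. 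I would finish by verifying the converse at the infinitesimal level, namely that an isotropic $\Sigma$ carrying a linear lift does produce a genuine relative-nullity direction, confirming that the characterization in case (b) is sharp.
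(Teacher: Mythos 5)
Your overall architecture coincides with the paper's: Chen's equality produces a relative nullity direction $\bfe_3$, the Codazzi equation forces the nullity leaves to be great circles so that $A$ is ruled (this is exactly Lemma \ref{ruledlem}), and Theorem \ref{mainruledthm} then supplies the dichotomy, with case (a) needing nothing further. Up to that point the argument is sound, and your observation that for a ruled associative 3-fold the residual content of Chen's equality is the pair $\II_A(\bfe_3,\bfe_1)=\II_A(\bfe_3,\bfe_2)=0$ is correct.

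The genuine gap is that the entire substance of case (b) --- translating these two conditions into isotropy of $\Sigma$ and linearity of its lift --- is left as an expectation rather than proved, and the expectation is not how the computation resolves. It is not the case that one condition corresponds to isotropy and the other to linearity. In the paper, after adapting frames so that $\II_A$ takes the form \eq{Chensffeq2}, the substitutions \eq{subseq1}--\eq{subseq2} show that Chen's equality is equivalent to $\mathfrak{h}_2=\mathfrak{h}_3=\theta_2=\theta_3=0$, hence to the single condition $\lambda=\mathfrak{h}\times\theta=0$, i.e.\ linearity of the associated pseudoholomorphic curve (Proposition \ref{Chenlinearprop}); isotropy of $\varpi(A)$ is then a \emph{consequence}, obtained by computing $\II_{\varpi(A)}$ from the Codazzi equation and \eq{surfeq2}--\eq{surfeq3} and checking that the first ellipse of curvature is a circle or point (Proposition \ref{Chenvarpiprop}, Corollary \ref{Chenlinearcor}). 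The converse, which you also defer, genuinely requires an argument: one adapts frames so that $\mathfrak{h}_2+\theta_2=\mathfrak{h}_3+\theta_3=0$ and uses the nonvanishing of $\mathfrak{h}_1+\theta_1=\omega_1+i\omega_2$ to deduce from $\lambda|_\Gamma=0$ that $\mathfrak{h}_j=\theta_j=0$ for $j=2,3$. Without these frame computations the characterisation in (b) --- in particular why ``linear'' as defined in Definition \ref{lineardfn} is precisely the right residual condition --- is not established.
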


\medskip

Our final question is inspired by classical work of Bonnet on minimal surfaces in space forms.

\begin{qnC}
Are there associative 3-folds in $\mathcal{S}^7$ which admit non-congruent isometric deformations?
\end{qnC}

Minimal surfaces in space forms always admit isometric deformations, but for higher dimensions and codimensions in spheres 
 little can be said in general.  Infinitesimal deformations of an associative 3-fold $A$ in $\mathcal{S}^7$ are governed by certain
 eigenspinors for a Dirac operator and thus, if $A$ is compact and suitably generic, one would expect $A$ to be rigid up to 
$\Spin(7)$ motions since the index of a Dirac operator on $A$ is zero.  However, we show that generic ruled associative 3-folds in
 $\mathcal{S}^7$ admit a 2-parameter family of local isometric deformations.  Moreover,  we are able to a provide a positive answer to
 Question C by exhibiting examples of associative 3-folds where the local isometric deformations extend to global ones as follows.  

\begin{thm}\label{isomthm}
Given a non-constant curvature 
minimal $\mathcal{S}^2$ in $\mathcal{S}^6$, 
 one may construct an associative 3-fold in $\mathcal{S}^7$, satisfying Chen's equality, which admits
an $\mathcal{S}^1$-family of non-congruent isometric deformations. 
\end{thm}

Since there are a large number of minimal 2-spheres in $\mathcal{S}^6$, this provides many 
examples of 1-parameter deformation families of isometric associative 3-folds in $\mathcal{S}^7$.  Moreover, 
there is a Weierstrass representation for these harmonic 2-spheres and thus for the resulting isometric associative 3-folds.

\begin{remark}
Theorem \ref{isomthm} is especially surprising given that isometric minimal 2-spheres in $\mathcal{S}^6$ are congruent by \cite[Theorem 5.15]{Barbosa}.
\end{remark}

\section[The G2 structure on S7]{The {\boldmath $\GG_2$} structure on {\boldmath $\mathcal{S}^7$}}\label{G2struct}

The key to defining associative submanifolds of the 7-sphere is to introduce a $\GG_2$ structure on $\mathcal{S}^7$, which 
is induced by the standard $\Spin(7)$ structure on $\R^8$.  
We begin by defining distinguished differential forms on $\R^7$ and $\R^8$.

\begin{dfn}\label{phisdfn}  Let $\R^7$ have coordinates $(x_1,\ldots,x_7)$ and let $\R^8$ have coordinates $(x_0,\ldots,x_7)$.  
For convenience we denote the form $\d x_i\w\d x_j\w\ldots\w\d x_k$ by $\d\bfx_{ij\ldots k}$.

We define a 3-form $\varphi_0$ on $\R^7$ and a 4-form $\Phi_0$ on $\R^8$ by:
\begin{align*}
\varphi_0&=\d\bfx_{123}+\d\bfx_{145}+\d\bfx_{167}+\d\bfx_{246}-\d\bfx_{257}-\d\bfx_{347}-\d\bfx_{356}\intertext{and}
\Phi_0&=\d\bfx_{0123}+\d\bfx_{0145}+\d\bfx_{0167}+\d\bfx_{0246}-\d\bfx_{0257}-\d\bfx_{0347}-\d\bfx_{0356}\nonumber\\
&\quad+\d\bfx_{4567}+\d\bfx_{2367}+\d\bfx_{2345}+\d\bfx_{1357}-\d\bfx_{1346}-\d\bfx_{1256}-\d\bfx_{1247}.
\end{align*}
Notice that $\Phi_0$ is self-dual and that if we decompose $\R^8=\R\oplus\R^7$ then 
\begin{equation}\label{Phispliteq}
\Phi_0=\d x_0\w\varphi_0+*\varphi_0,
\end{equation}
where $*\varphi_0$ is the Hodge dual of $\varphi_0$.
By identifying $\R^8$ with $\C^4$ in the usual way, 
we can decompose $\Phi_0$ as follows:
\begin{equation}\label{Phispliteq2}
\Phi_0=\textstyle\frac{1}{2}\,\omega_0\w\omega_0+\Re\Omega_0,
\end{equation}
where $\omega_0$ and $\Omega_0$ are the standard K\"ahler form and holomorphic volume form on $\C^4$. 
It is also worth observing that if $\H$ are the quaternions and we identify $\R^8$ with $\H^2$ in an appropriate way then, by \cite[Lemma 2.21]{BryantHarvey}, we have that
$$\Phi_0=\textstyle\frac{1}{2}\omega_I^2+\textstyle\frac{1}{2}\omega_J^2-\textstyle\frac{1}{2}\omega_K^2,$$
where $\omega_I,\omega_J,\omega_K$ are the K\"ahler forms associated with the triple of complex structures $(I,J,K)$ given by the standard hyperk\"ahler structure on $\H^2$.
\end{dfn}

We can define the simple 14-dimensional exceptional Lie group $\GG_2$ as the stabilizer of $\varphi_0$ in $\GL(7,\R)$. 
Similarly, the stabilizer of $\Phi_0$ in $\GL(8,\R)$ is 
 $\Spin(7)$. Using $\varphi_0$ we can now define general $\GG_2$ 
structures.

\begin{dfn}\label{g2structuredfn}
We say that a 3-form $\varphi$ on an oriented 7-manifold $X$ is \emph{positive} if, for all $x\in X$, 
 $\varphi|_x=\iota_x^*(\varphi_0)$ for some orientation-preserving isomorphism $\iota_x:T_xX\rightarrow\R^7$.  
A positive 3-form $\varphi$ on $X$ determines a unique metric $g_{\varphi}$ on $X$ such that, if $*\varphi$ is the Hodge dual 
to $\varphi$ 
with respect to $g_{\varphi}$ and $g_{\R^7}$ is the Euclidean metric on $\R^7$, the triple $(\varphi,*\varphi,g_{\varphi})$ is identified with
 $(\varphi_0,*\varphi_0,g_{\R^7})$ at each point.  Hence, we may call a positive 3-form 
a \emph{$\GG_2$ structure} on $X$.
\end{dfn}

Since $\GG_2$ is the automorphism group of the octonions $\O$, it is unsurprising that there is 
a relationship between $\varphi_0$ and the cross product structure on $\Im\O$.  In fact, we may identify $\Im\O$
with $\R^7$ so that for all vectors $x,y,z\in\R^7$,
$$\varphi_0(x,y,z)=g_{\R^7}(x\times y,z).$$

This observation leads us to make the following definition.

\begin{dfn}\label{crossprodsdfn}
Let $X$ be an oriented 7-manifold with a $\GG_2$ structure $\varphi$.  In the notation of Definition \ref{g2structuredfn}, 
we define a \emph{cross product} on $X$ via the equation
$$\varphi(x,y,z)=g_{\varphi}(x\times y,z)$$
for $x,y,z\in C^{\infty}(TX)$.
\end{dfn}

\begin{remark}
This relationship between $\GG_2$ structures and cross products allows us to deduce from \cite[Corollary 3.3]{Gray} that a 7-manifold
 admits a $\GG_2$ structure if and only if it is oriented and spin.
\end{remark}

We now define the $\GG_2$ structure on $\mathcal{S}^7$.

\begin{dfn}\label{S7G2dfn}
Write $\R^8\setminus\{0\}=\R^+\times\mathcal{S}^7$ with $r$ being the
 coordinate on $\R^+$ and $\mathcal{S}^7$ the unit 7-sphere.  Then, since $\Phi_0$ is self-dual, we may define a 3-form on $\mathcal{S}^7$ via the formula
 \begin{equation}\label{Phiphieq}
\Phi_0|_{(r,p)}=r^3\d r\w\varphi|_p+r^4\!*\!\varphi|_p,
\end{equation}
where $*$ here denotes the usual Hodge star on $\mathcal{S}^7$.  Moreover, the fact 
that $\d\Phi_0=0$ implies that $\d\varphi=4\!*\!\varphi$. 
 By comparing \eq{Phispliteq} and \eq{Phiphieq} it is clear 
that $\varphi$ is a positive 3-form on $\mathcal{S}^7$ and $g_\varphi$ given in Definition \ref{g2structuredfn} is the round metric on $\mathcal{S}^7$ with constant curvature 1.  
Thus, $\varphi$ is a $\GG_2$ structure on $\mathcal{S}^7$. 
\end{dfn}

\begin{remarks} A $\GG_2$ structure $\varphi$ on $X$ is called
\emph{nearly parallel} if $\d\varphi=4\lambda*\!\varphi$ for some constant $\lambda\neq 0$.  We then call $(X,\varphi)$ a \emph{nearly 
$\GG_2$ manifold} (or a manifold with \emph{weak holonomy} $\GG_2$ in the sense of \cite{Grayweak}).  Nearly parallel $\GG_2$ structures are discussed in some 
detail in \cite{Nearlypar}.  
\end{remarks}

We may also define a contact structure on $\mathcal{S}^7$ using its relationship with $\C^4$.

\begin{dfn}\label{S7contactdfn}
Consider $\R^+\times\mathcal{S}^7=\R^8\setminus\{0\}\cong\C^4\setminus\{0\}$, with $r$ the coordinate on $\R^+$, and let $\omega_0$ be the usual K\"ahler form on $\C^4$.  Since $\omega_0$ is 
a real 2-form and $\d\omega_0=0$, there exists a 1-form $\gamma$ on $\mathcal{S}^7$ such that
\begin{equation}\label{contacteq}
\omega_0|_{(r,p)}=r\d r\w\gamma|_p +\textstyle\frac{1}{2}\,r^2\d\gamma|_p.
\end{equation}
Since $\omega_0^4\neq 0$ we see that $\gamma\w(\d\gamma)^3\neq 0$, so $\gamma$ defines a \emph{contact structure} 
on $\mathcal{S}^7$.
\end{dfn}

As an aside we observe that, just as $\mathcal{S}^7$ receives a nearly parallel $\GG_2$ structure from its inclusion in $\R^8$, 
$\mathcal{S}^6$ inherits a special $\SU(3)$ structure from its inclusion in $\R^7$; namely, 
a \emph{nearly K\"ahler structure}.  We do not discuss this structure fully as it is unnecessary, but we define the induced \emph{almost 
symplectic} and \emph{almost complex} structures on $\mathcal{S}^6$ and the associated distinguished classes of submanifolds. 

\begin{dfn}\label{S6nearlyKdfn}
Write $\R^7\setminus\{0\}=\R^+\times\mathcal{S}^6$ with $r$ the coordinate on $\R^+$ and $\mathcal{S}^6$ the unit 6-sphere. Since $\d\varphi_0=0$,  there exists a 2-form $\omega$ 
on $\mathcal{S}^6$ such that
\begin{equation*}
\varphi_0|_{(r,p)}=r^2\d r\w\omega|_p+\textstyle\frac{1}{3}\,r^3\d\omega|_p.
\end{equation*}
Since $\varphi_0$ is positive, $\omega$ is non-degenerate but \emph{not} closed.  If $g_{\mathcal{S}^6}$ is the round metric on 
$\mathcal{S}^6$, we can define an almost complex structure $J$ on $\mathcal{S}^6$ via the formula  $g_{\mathcal{S}^6}(Ju,v)=\omega(u,v)$ for 
 tangent vectors $u$ and $v$. 

We say that an oriented surface $\Sigma$ in $\mathcal{S}^6$ is a \emph{pseudoholomorphic curve} if 
$\omega|_\Sigma=\vol_\Sigma$ or, equivalently, if $J(T_\sigma\Sigma)=T_\sigma\Sigma$ for all $\sigma\in\Sigma$.  We say that 
an oriented 3-dimensional submanifold $L$ of $\mathcal{S}^6$ is \emph{Lagrangian} if $\omega|_L\equiv 0$.  
\end{dfn}

\section[Associative submanifolds of S7]{Associative submanifolds of {\boldmath $\mathcal{S}^7$}}\label{assoc}

In this section we define and discuss our primary objects of interest.  Note that in general we consider immersed  
submanifolds.

\begin{dfn}\label{asscoassdfn}
Let $X$ be an oriented 7-manifold with a $\GG_2$ structure $\varphi$.  Using the 
notation of Definition \ref{g2structuredfn}, define a vector-valued 3-form $\chi$ on $X$ via 
$$*\varphi(x,y,z,w)=g_{\varphi}\big(\chi(x,y,z),w\big)$$
for vector fields $x,y,z,w$ on $X$.
\begin{itemize}
\item[(a)] An oriented 3-fold $A$ in $X$ is 
\emph{associative} if $\varphi|_A=\vol_A$.  Equivalently, $A$ is associative if $\chi|_A\equiv 0$ and $\varphi|_A>0$.
\item[(b)] An oriented 4-fold $N$ in $X$ is \emph{coassociative} if $*\varphi|_N=\vol_N$.  
  Equivalently, $N$ is coassociative if $\varphi|_N\equiv 0$ and $*\varphi|_N>0$.
\end{itemize}
\end{dfn}

\begin{remarks}
Since the orthogonal complement of an associative 3-plane in $\R^7$ is a coassociative 4-plane, one can equivalently define an 
oriented 3-fold $A$ in $(X,\varphi)$ 
to be associative if $\varphi$ vanishes on the 
normal bundle of $A$.  This characterisation shows that the apparently overdetermined system of equations given by the vanishing of $\chi$ is in fact determined.  Though this property makes this alternative definition more attractive, it is not standard and we elect not to adopt it here.
\end{remarks}

We make an elementary observation, which holds for any nearly $\GG_2$ manifold.

\begin{lem}
There are no coassociative submanifolds of $\mathcal{S}^7$. 
\end{lem}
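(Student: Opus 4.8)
The plan is to derive a contradiction directly from the defining relation of a nearly parallel $\GG_2$ structure together with the characterisation of coassociative submanifolds in Definition \ref{asscoassdfn}(b). Suppose, for contradiction, that $N$ is a coassociative 4-fold in a nearly $\GG_2$ manifold $(X,\varphi)$, and let $\iota:N\hookrightarrow X$ denote the inclusion. By Definition \ref{asscoassdfn}(b) we have simultaneously
\begin{equation*}
\iota^*\varphi=\varphi|_N\equiv 0\qquad\text{and}\qquad \iota^*(*\varphi)=*\varphi|_N=\vol_N.
\end{equation*}
The first equation says the $3$-form $\varphi$ pulls back to zero on $N$; the second says the $4$-form $*\varphi$ pulls back to the (nowhere-vanishing) volume form of $N$.

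Next I would invoke the nearly parallel condition. Following Definition \ref{nearlyG2dfn} and the normalisation $\lambda=1$ adopted thereafter, the $\GG_2$ structure satisfies $\d\varphi=4\!*\!\varphi$, so that $*\varphi=\frac{1}{4}\,\d\varphi$. The key step is then the naturality of pullback with respect to exterior differentiation, $\iota^*\circ\d=\d\circ\iota^*$, which lets me compute
\begin{equation*}
\vol_N=\iota^*(*\varphi)=\textstyle\frac{1}{4}\,\iota^*(\d\varphi)=\frac{1}{4}\,\d(\iota^*\varphi)=\frac{1}{4}\,\d(\varphi|_N)=\frac{1}{4}\,\d 0=0.
\end{equation*}
This contradicts the fact that $\vol_N$ is a nowhere-zero $4$-form on $N$, and the lemma follows.

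Honestly, there is no genuine obstacle here: the entire content is that on a nearly $\GG_2$ manifold the coassociative calibration $*\varphi$ is exact (being proportional to $\d\varphi$), whereas a calibrated submanifold must have nonzero volume. The only point requiring any care is to confirm that $\varphi|_N\equiv 0$ means the pullback $\iota^*\varphi$ vanishes \emph{identically} as a $3$-form, so that its exterior derivative is legitimately zero; this is immediate since $N$ is $4$-dimensional and $\varphi|_N$ is a genuine (non-top-degree) $3$-form on $N$. I would present the argument in exactly the three lines above. It is worth remarking that the same reasoning shows no closed coassociative submanifold can exist even in the weaker integral sense, since $\int_N*\varphi=\frac14\int_N\d(\iota^*\varphi)=0$ by Stokes' theorem, which is the global shadow of the pointwise obstruction.
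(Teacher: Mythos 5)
Your proof is correct and is essentially identical to the paper's: both deduce $\d(\varphi|_N)\equiv 0$ from $\varphi|_N\equiv 0$, use $\d\varphi=4\!*\!\varphi$ to conclude $*\varphi|_N\equiv 0$, and contradict $*\varphi|_N=\vol_N$. The extra remark on Stokes' theorem is a pleasant aside but adds nothing to the argument needed.
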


\begin{proof}
Suppose $N$ is coassociative in $(\mathcal{S}^7,\varphi)$.   
Then $\varphi|_N\equiv 0$ implies that 
$\d\varphi|_N\equiv 0$.  
However, $\d\varphi=4\!*\!\varphi$, so $*\varphi|_N\equiv 0$, yielding our required contradiction.
\end{proof}

We may define distinguished submanifolds of $\R^8$ as follows.  

\begin{dfn}\label{Cayleydfn}
An oriented 4-fold $N$ in $\R^8$ is \emph{Cayley} if $\Phi_0|_N=\vol_N$.  
\end{dfn}

\noindent We can alternatively characterise Cayley 4-folds as follows.

\begin{dfn}\label{taudfn}
Use the notation of Definition \ref{phisdfn}.  
By \cite[Corollary IV.1.29]{HarLaw}, we may define 
a vector-valued 4-form $\tau$ on $\R^8$ such that, if $\tau_j$ is 
the component of $\tau$ in the direction $\frac{\partial}{\partial x_j}$, then $\tau_0=\Phi_0$ and an oriented 4-fold $N$ in $\R^8$ is Cayley if and only if $\tau_j|_N\equiv 0$ for $j=1,\ldots,7$ (up to a choice of orientation so that $\Phi_0|_N>0$). 
The forms $\tau_j$ correspond to the components of the fourfold cross product on $\O$ discussed in \cite[Appendix IV.B]{HarLaw}.  

If we identify $\R^8$ with $\C^4$ as in Definition \ref{phisdfn} we may explicitly calculate:
\begin{align*}
\tau_1&= \Im\Omega_0;\displaybreak[0]\\
\tau_2+i\tau_3&=i(\d z_1\w\d z_2-\d\bar{z}_3\w\d\bar{z}_4)\w\omega_0;\displaybreak[0]\\
\tau_4+i\tau_5&=i(\d z_1\w \d z_3-\d\bar{z}_4\w\d\bar{z}_2)\w\omega_0;\displaybreak[0]\\
\tau_6+i\tau_7&=i(\d z_1\w\d z_4-\d\bar{z}_2\w\d\bar{z}_3)\w\omega_0.
\end{align*}
In particular, we can confirm that $\omega_0|_N\equiv 0$ and $\Im\Omega_0|_N\equiv 0$ force $\tau_j|_N\equiv 0$
for $j=1,\ldots, 7$; i.e.~if $N$ is \emph{special Lagrangian} in $\C^4$ then it is Cayley in $\R^8$.
\end{dfn}

We now give our first basic result concerning associative submanifolds of $\mathcal{S}^7$.
  By a \emph{cone} in $\R^n$ we mean a dilation-invariant subset $C\subseteq\R^n$ and by its \emph{link} we mean the 
subset of the unit $(n-1)$-sphere in $\R^n$ given by $C\cap\mathcal{S}^{n-1}$.

\begin{lem}\label{Cayleyconelem}
A 4-dimensional cone $N$ in $\R^8$ is Cayley if and only if $A=N\cap\mathcal{S}^7$ is associative in $\mathcal{S}^7$.  
\end{lem}

\begin{proof}  
By Definition \ref{Cayleydfn}, $N$ is a Cayley cone in $\R^8$ if and only if $\Phi_0|_N=\vol_N$. 
By \eq{Phiphieq}, $\Phi_0|_N=\vol_N$ if and only if $\varphi|_A=\vol_A$, where $\varphi$ is the $\GG_2$ structure 
on $\mathcal{S}^7$. The result follows from Definition \ref{asscoassdfn}.
\end{proof}

From Lemma \ref{Cayleyconelem} we may deduce the following.

\begin{cor}\label{minrealanalcor}
Associative submanifolds of $\mathcal{S}^7$ are minimal and real analytic wherever they are non-singular.
\end{cor}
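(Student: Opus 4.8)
Let me sketch the argument, the idea being to pass to the Riemannian cone, where a genuine parallel calibration is available, and then to transfer both conclusions back to the link.

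Let $A$ be a non-singular associative submanifold of $\mathcal{S}^7$. By Lemma \ref{Cayleyconelem}, the cone $N=\R^+\times A$ is Cayley in $\R^8$. Now $\Phi_0$ is a parallel (hence closed) $4$-form on $\R^8$ which restricts to the volume form on every Cayley $4$-plane, so it is a calibration in the sense of Harvey--Lawson. Consequently $N$ is a calibrated submanifold, therefore volume-minimizing in its homology class, and in particular $N$ is minimal (zero mean curvature) away from the cone point.

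To descend from $N$ to $A$ I would invoke the classical relationship between the mean curvature of a cone and that of its link. For $CA=\R^+\times A$ the radial lines are geodesics of $\R^8$ and so contribute nothing to the second fundamental form, while the slices $\{r\}\times A$ are rescaled copies of $A$ whose normal spaces in $\R^8$ coincide (after radial transport) with the normal spaces of $A$ in $\mathcal{S}^7$. A direct computation then shows that the mean curvature vector of $CA$ at $(r,p)$ equals $r^{-1}$ times the mean curvature vector of $A$ in $\mathcal{S}^7$ at $p$. Hence $CA$ is minimal in $\R^8$ if and only if $A$ is minimal in $\mathcal{S}^7$, and the preceding paragraph yields the minimality of $A$.

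For real analyticity I would appeal to elliptic regularity. Wherever it is non-singular, $A$ is locally the graph of a solution of the minimal surface system, a quasilinear elliptic system whose coefficients are determined by the ambient metric. Since the round metric on $\mathcal{S}^7$ is real analytic, Morrey's theorem on the analyticity of solutions of analytic nonlinear elliptic systems shows that $A$ is real analytic wherever it is immersed. (Alternatively, one could note that the Cayley condition $\tau_j|_N\equiv 0$ is itself an elliptic first-order system with analytic coefficients, so the cone $N$, and hence its link $A$, is real analytic.)

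The main obstacle is conceptual rather than computational: because the $\GG_2$ form on the sphere is not closed (indeed $\d\varphi=4*\varphi$), it is \emph{not} a calibration there, so minimality cannot be read off directly from the associative condition on $\mathcal{S}^7$. Passing to the cone, where $\Phi_0$ is parallel, is exactly what removes this difficulty. The only step demanding genuine care is the cone-to-link mean curvature identity, which, while routine, is what converts minimality of the Cayley cone into minimality of the associative link.
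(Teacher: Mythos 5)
Your proposal is correct and follows essentially the same route as the paper: pass to the Cayley cone via Lemma \ref{Cayleyconelem}, use that $\Phi_0$ is a (parallel, hence genuine) calibration to get minimality and real analyticity of the cone, and descend to the link. The paper simply cites Harvey--Lawson and Joyce for the properties of Cayley $4$-folds where you have spelled out the calibration argument, the cone-to-link mean curvature identity, and the elliptic regularity step explicitly; all of these details are accurate.
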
 

\begin{proof}
This is immediate from the fact that Cayley 4-folds in $\R^8$ have these properties by \cite[Theorem II.4.2]{HarLaw} and 
\cite[Theorem 12.4.3]{JoyceRiem}.
\end{proof}

Given the real analyticity of associative 3-folds in $\mathcal{S}^7$, we can apply the Cartan--K\"ahler Theorem  
as in \cite[Theorem IV.4.1]{HarLaw} to prove the 
following.  

\begin{prop}\label{extendprop}
Let $S$ be an oriented real analytic surface in $\mathcal{S}^7$.  There locally exists a locally 
unique associative 3-fold in $\mathcal{S}^7$ containing $S$.
\end{prop}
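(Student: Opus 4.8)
The plan is to realise associative $3$-folds as the integral manifolds of an exterior differential system and then apply the Cartan--K\"ahler theorem, taking $S$ as real analytic Cauchy data. The whole argument rests on one piece of pointwise linear algebra for the $\GG_2$ cross product of Definition \ref{crossprodsdfn}. If $u,v$ is an orthonormal pair spanning an oriented $2$-plane $P\subseteq T_p\mathcal{S}^7$, then $u\times v$ is orthogonal to $P$ and of unit length, since $g_\varphi(u\times v,u)=\varphi(u,v,u)=0$ and the cross product inherits the norm identity $|u\times v|^2=|u|^2|v|^2-g_\varphi(u,v)^2$ from $\Im\O$. Moreover
\[
\widehat P:=\mathrm{span}\{u,v,u\times v\}
\]
is the \emph{unique} associative $3$-plane containing $P$: we have $\varphi(u,v,u\times v)=g_\varphi(u\times v,u\times v)=1=\vol$, so $\widehat P$ is associative, while any associative $3$-plane is closed under $\times$, hence contains $u\times v$ and, being $3$-dimensional, equals $\widehat P$. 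Thus every oriented $2$-plane extends to a unique associative $3$-plane.

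First I would set up the system. Let $\mathcal{I}$ be the differential ideal on $\mathcal{S}^7$ generated by the (four normal) scalar components of the vector-valued $3$-form $\chi$ of Definition \ref{asscoassdfn}, with independence condition $\varphi>0$. By Definition \ref{asscoassdfn}(a), a connected oriented $3$-fold is an integral manifold of $(\mathcal{I},\varphi)$ exactly when it is associative. The system is real analytic, since $\varphi$, and hence $*\varphi$ and $\chi$, are real analytic (indeed algebraic) on the round $\mathcal{S}^7$. The surface $S$ is a $2$-dimensional real analytic integral manifold of $\mathcal{I}$: every generator, its exterior derivative, and every element of the ideal has degree $\geq 3$, so all of $\mathcal{I}$ restricts to zero on the $2$-dimensional $S$, which is real analytic by hypothesis.

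Next I would verify the Cartan--K\"ahler hypotheses at each tangent plane $E=T_sS$. Because $\mathcal{I}$ carries no forms of degree $1$ or $2$, the polar spaces along the flag $0\subset\R u\subset E$ are the full $T_p\mathcal{S}^7$ in the first two slots, while the polar space of $E$ itself is cut out by $\chi(u,v,\,\cdot\,)=0$. The linear algebra above shows $\chi(u,v,\,\cdot\,)$ annihilates $\widehat E$ and maps the normal $4$-space isomorphically to itself, so this polar space equals $\widehat E$ and has dimension $3$. Hence the polar dimensions are the constant values $7,7,3$, which makes $E$ a K\"ahler-regular (ordinary) integral element, and the extension direction $u\times v$ transverse to $E$ is unique. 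Cartan--K\"ahler then produces a real analytic $3$-dimensional integral manifold $A$ of $(\mathcal{I},\varphi)$ containing $S$; by construction $A$ is associative, and the uniqueness of the one-step prolongation upgrades to local uniqueness of $A$, exactly as in \cite[Theorem IV.4.1]{HarLaw}.

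The main obstacle is the regularity bookkeeping in the third step: one must confirm that $E=T_sS$ is genuinely K\"ahler-regular and that $S$ avoids any degenerate locus. In fact there is no such locus, since $u\times v$ is always a unit vector orthogonal to $P$ and the rank of $\chi(u,v,\,\cdot\,)$ is uniformly $4$, so the polar dimension $3$ is constant over the connected space of oriented $2$-planes. Granting this uniformity, Cartan's test is immediate --- the uniqueness of $\widehat E$ forces the top Cartan character to vanish --- so involutivity and the uniqueness of the extension follow together, and the proposition follows.
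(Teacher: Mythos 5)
Your proposal is correct and is precisely the argument the paper intends: the paper simply invokes the Cartan--K\"ahler theorem ``as in \cite[Theorem IV.4.1]{HarLaw}'', and that cited theorem is exactly the exterior differential system generated by the components of $\chi$, with the pointwise fact that an oriented $2$-plane spanned by orthonormal $u,v$ lies in the unique associative $3$-plane $\mathrm{span}\{u,v,u\times v\}$ giving polar spaces of dimension $3$ and hence a regular flag with unique one-step extension. You have merely written out the details (real analyticity of the ideal on the round $\mathcal{S}^7$, the polar-space computation via the kernel of the skew map $\chi(u,v,\cdot)$) that the paper leaves to the citation.
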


\noindent We deduce that associative 3-folds in $\mathcal{S}^7$ locally depend on 4 functions of 2 variables, in the sense of
 exterior differential systems (EDS); that is, the EDS defining associative 3-folds in $\mathcal{S}^7$ is involutive and
  its last non-zero Cartan character is $s_2=4$.  A good reference for the theory of EDS is \cite{BCG3}.     
Proposition \ref{extendprop} is a simple modification of the result \cite[Lemma 2.1]{RoblesSalur}.

\medskip

Using the relationship between Cayley geometry and other geometries we can give some  examples of 
associative 3-folds in $\mathcal{S}^7$.  We begin with the following.

\begin{prop}\label{assocS6prop} Decompose $\R^8=\R\oplus\R^7$ and recall 
 Definition \ref{S6nearlyKdfn}.  
\begin{itemize}
\item[\emph{(a)}]  
 Let $\Sigma$ be an
 oriented surface in $\mathcal{S}^6$ and let $$A=\{(\cos t,\sigma\sin t)\in\R\oplus\R^7\,:\,\sigma\in\Sigma, t\in(0,\pi)\}\subseteq\mathcal{S}^7.$$  Then $A$ is an associative 3-fold in $\mathcal{S}^7$ 
if and only if $\Sigma$ is a pseudoholomorphic curve in $\mathcal{S}^6$. 
\item[\emph{(b)}] Let $L$ be an oriented 3-dimensional submanifold of $\mathcal{S}^6$ and let $A=\{0\}\times L\subseteq\mathcal{S}^7$.
Then $A$ is an associative 3-fold in $\mathcal{S}^7$ if and only if $L$ is a Lagrangian submanifold of $\mathcal{S}^6$.
\end{itemize}
\end{prop}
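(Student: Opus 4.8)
The plan is to reduce both statements to facts about calibrated cones, exploiting Lemma \ref{Cayleyconelem} together with the splitting $\Phi_0=\d x_0\w\varphi_0+*\varphi_0$ from \eq{Phispliteq} and the cone expression $\varphi_0|_{(r,p)}=r^2\d r\w\omega|_p+\frac13 r^3\d\omega|_p$ from Definition \ref{S6nearlyKdfn}. By Lemma \ref{Cayleyconelem}, $A$ is associative in $\mathcal{S}^7$ if and only if the cone $CA=\R^+\cdot A\subseteq\R^8$ is Cayley, so in each case I would compute $CA$ explicitly and test the condition $\Phi_0|_{CA}=\vol_{CA}$.

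For part (a), I would first reparametrise $CA$. Writing a point of $A$ as $(\cos t,\sigma\sin t)$ and scaling by $r>0$, the substitution $a=r\cos t$, $b=r\sin t$ identifies $\{r>0,\,t\in(0,\pi)\}$ with the half-plane $\{b>0\}$ and exhibits $CA=\{(a,b\sigma):a\in\R,\,b>0,\,\sigma\in\Sigma\}=\R\bfe_0\times C\Sigma$, where $C\Sigma\subseteq\R^7$ is the cone on $\Sigma$; a short computation of the induced metric shows this is a Riemannian product. Next I would show that a product $\R\bfe_0\times P$ with $P^3\subseteq\R^7$ is Cayley precisely when $P$ is associative: feeding the basis $\bfe_0,p_1,p_2,p_3$ into $\Phi_0=\d x_0\w\varphi_0+*\varphi_0$, the $*\varphi_0$ term (a form on $\R^7$) annihilates $\bfe_0$ while the $\d x_0\w\varphi_0$ term returns $\varphi_0(p_1,p_2,p_3)$, so $\Phi_0|_{\R\bfe_0\times P}=\vol$ if and only if $\varphi_0|_P=\vol_P$. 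Finally, the cone formula gives $\varphi_0|_{C\Sigma}=r^2\d r\w(\omega|_\Sigma)$ since $\d\omega|_\Sigma=0$ on the surface $\Sigma$, and comparing with $\vol_{C\Sigma}=r^2\d r\w\vol_\Sigma$ shows $C\Sigma$ is associative if and only if $\omega|_\Sigma=\vol_\Sigma$, i.e.\ $\Sigma$ is pseudoholomorphic.

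For part (b), $CA=\{0\}\times CL$ lies in the hyperplane $\{x_0=0\}$, so $\d x_0|_{CA}=0$ and hence $\Phi_0|_{CA}=*\varphi_0|_{CL}$ by \eq{Phispliteq}. Thus $CA$ is Cayley if and only if $*\varphi_0|_{CL}=\vol_{CL}$, that is, if and only if the cone $CL\subseteq\R^7$ is coassociative. I would then invoke the Harvey--Lawson characterisation that a $4$-plane is coassociative exactly when $\varphi_0$ vanishes on it \cite{HarLaw}, reducing the problem to $\varphi_0|_{CL}\equiv 0$. Using the cone formula once more, $\varphi_0|_{CL}=r^2\d r\w(\omega|_L)+\frac13 r^3(\d\omega|_L)$; since the two summands are distinguished by the presence of $\d r$ and since $\omega|_L=0$ forces $\d\omega|_L=\d(\omega|_L)=0$ by naturality, this vanishes if and only if $\omega|_L=0$, i.e.\ $L$ is Lagrangian.

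I expect the main obstacle to be the bookkeeping in part (a): correctly identifying the spherical suspension cone as the Riemannian product $\R\times C\Sigma$, and verifying that the Cayley condition for such a product collapses to associativity of $C\Sigma$ with no stray contribution from $*\varphi_0$. A secondary point, present in both parts, is orientation and positivity: the equivalences above hold up to a choice of orientation, and one should note that when $\Sigma$ is pseudoholomorphic (resp.\ $L$ is Lagrangian) the relevant form is automatically a positive volume form---in (b) this positivity is exactly the content of the coassociative-plane characterisation, so no separate check is required.
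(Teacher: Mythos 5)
Your proposal is correct and follows essentially the same route as the paper: reduce to the Cayley condition on the cone via Lemma \ref{Cayleyconelem}, use the splitting $\Phi_0=\d x_0\w\varphi_0+*\varphi_0$ from \eq{Phispliteq} to show $\R\times N$ (resp.\ $\{0\}\times N$) is Cayley iff $N$ is associative (resp.\ coassociative) in $\R^7$, and then use the cone formula of Definition \ref{S6nearlyKdfn} to translate these into the pseudoholomorphic and Lagrangian conditions on the link. You simply make explicit some steps the paper leaves implicit, notably the identification $CA\cong\R\times C\Sigma$ in part (a) and the orientation/positivity bookkeeping.
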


\begin{proof}
Let $C$ be a 4-dimensional submanifold in $\R^8=\R\oplus\R^7$.  Then $C=\R\times N$ is Cayley if and only if $N$ is associative in
 $\R^7$ by \eq{Phispliteq} and Definitions \ref{asscoassdfn}(a) and \ref{Cayleydfn}.  Similarly, $C=\{0\}\times N$ is Cayley if and only if $N$ is coassociative in $\R^7$ by \eq{Phispliteq} and Definitions \ref{asscoassdfn}(b) and \ref{Cayleydfn}.  Finally, we note that 
a cone $N$ in $\R^7$ is either associative or coassociative if and only if the link of $N$ in $\mathcal{S}^6$ is either a 
pseudoholomorphic curve or a Lagrangian submanifold respectively by Definitions \ref{S6nearlyKdfn} and \ref{asscoassdfn}.
\end{proof}

\noindent Many examples of pseudoholomorphic curves and Lagrangians in $\mathcal{S}^6$ are known 
 (see, for example, \cite{LotayLag}), 
which then give examples of associative 3-folds in $\mathcal{S}^7$.

\begin{prop}\label{minLegprop}
Recall the contact structure $\gamma$ on $\mathcal{S}^7$ given in 
Definition \ref{S7contactdfn} and that an oriented 3-dimensional submanifold $A\subseteq\mathcal{S}^7$ is called Legendrian if 
$\gamma|_A\equiv 0$.  A minimal Legendrian submanifold of 
$\mathcal{S}^7$ is associative.
\end{prop}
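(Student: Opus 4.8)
The plan is to pass to the Riemannian cone and apply Lemma \ref{Cayleyconelem}: the cone $CA=\R^+\times A$ (Definition \ref{conedfn}) is a $4$-dimensional cone in $\R^8\cong\C^4$, and $A$ is associative in $\mathcal{S}^7$ if and only if $CA$ is Cayley. So I would establish that $CA$ is Cayley by checking that all of the forms $\tau_j$ of Definition \ref{taudfn} vanish on it.

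First I would show that the Legendrian hypothesis forces $CA$ to be Lagrangian in $\C^4$. The tangent space to $CA$ at $(r,p)$ is spanned by the radial direction $\partial_r$ together with $T_pA$, so from the expression $\omega_0|_{(r,p)}=r\,\d r\w\gamma|_p+\frac{1}{2}r^2\d\gamma|_p$ in \eq{contacteq} the radial--tangential components of $\omega_0|_{CA}$ are multiples of $\gamma|_A$ and the tangential--tangential components are multiples of $\d\gamma|_A$. Since $\gamma|_A\equiv0$ also gives $\d\gamma|_A=\d(\gamma|_A)\equiv0$, both vanish, so $\omega_0|_{CA}\equiv0$ and $CA$ is Lagrangian. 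By the complex description in Definition \ref{taudfn}, each of $\tau_2,\dots,\tau_7$ is a wedge of a real $2$-form with $\omega_0$; as $\omega_0$ restricts to zero on the Lagrangian $CA$, all six of these forms vanish on $CA$.

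It then remains only to make $\tau_1=\Im\Omega_0$ vanish on $CA$, and this is where minimality enters. A submanifold of $\mathcal{S}^7$ is minimal exactly when its cone is minimal in $\R^8$, so $CA$ is a minimal Lagrangian submanifold of $\C^4$. By the Harvey--Lawson characterisation, a minimal Lagrangian submanifold has constant Lagrangian angle, i.e.\ $\Omega_0|_{CA}=e^{i\theta_0}\vol_{CA}$ for a constant $\theta_0$, hence is special Lagrangian. With the phase and orientation fixed so that $\theta_0=0$ one gets $\Im\Omega_0|_{CA}\equiv0$ and, using $\Phi_0=\frac{1}{2}\,\omega_0\w\omega_0+\Re\Omega_0$ from \eq{Phispliteq2} together with $\omega_0|_{CA}=0$, also $\Phi_0|_{CA}=\Re\Omega_0|_{CA}=\vol_{CA}$. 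Thus every $\tau_j$ vanishes on $CA$, so $CA$ is Cayley, and Lemma \ref{Cayleyconelem} yields that $A$ is associative in the sense of Definition \ref{asscoassdfn}.

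The Lagrangian step is routine, so I expect the one genuinely delicate point to be this last phase normalisation. Minimality only forces the Lagrangian angle $\theta_0$ to be \emph{constant}, whereas the Cayley (equivalently associative) condition needs $\theta_0=0$, so that $\Im\Omega_0$ itself—rather than $\Im(e^{-i\theta_0}\Omega_0)$—vanishes and $\Phi_0|_{CA}$ equals $+\vol_{CA}$ rather than $\cos\theta_0\,\vol_{CA}$. I would therefore be careful to pin down the phase, either by incorporating it into the notion of a (special) Legendrian submanifold or by recording that the correct orientation and calibration angle are selected, since it is precisely this calibration-angle bookkeeping, and not the Lagrangian or minimality computations, that requires attention.
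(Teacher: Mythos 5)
Your argument is correct and follows the paper's proof essentially verbatim: the Legendrian condition gives a Lagrangian cone via \eq{contacteq}, minimality upgrades it to special Lagrangian with constant phase (the paper cites \cite[Proposition 2.5]{Haskins} for this step), and the phase is normalised to $1$ by a suitable identification of $\C^4\cong\R^8$ before applying \eq{Phispliteq2} and Lemma \ref{Cayleyconelem}. The phase normalisation you single out as the delicate point is exactly the one issue the paper also addresses explicitly; your additional check that all the $\tau_j$ vanish is harmless but redundant once $\Phi_0|_{CA}=\vol_{CA}$ is established.
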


\begin{proof}
Let $A$ be Legendrian in $\mathcal{S}^7$.  By \eq{contacteq}, the cone $CA$ is Lagrangian in $\C^4\cong\R^8$.  
  If $A$ is minimal, then $CA$ is minimal and by \cite[Proposition 2.5]{Haskins} is, in fact, \emph{special Lagrangian with phase $e^{i\theta}$} in $\C^4$ for some constant $\theta$; that is, in the notation of 
Definition \ref{phisdfn}, $\omega_0|_{CA}\equiv 0$ and $\Re(e^{-i\theta}\Omega_0)|_{CA}=\vol_{CA}$.  By making a suitable identification of $\C^4\cong\R^8$ we can ensure that $e^{i\theta}=1$, and so $CA$ is 
Cayley in $\R^8$ by \eq{Phispliteq2} and Definition \ref{Cayleydfn}.  The result follows from Lemma \ref{Cayleyconelem}.
\end{proof}

\noindent From the proof of Proposition \ref{minLegprop}, we have that minimal Legendrian submanifolds of $\mathcal{S}^7$ are the
 links of special Lagrangian cones in $\C^4$.  Explicit examples
of such cones are given in \cite[Examples 8.3.5 \& 8.3.6]{JoyceRiem}.  Moreover, a construction is described in 
\cite{HaskinsKap} and \cite{HaskinsKap2} which yields infinitely many topological types of minimal Legendrian, hence associative,
 submanifolds of $\mathcal{S}^7$. 

We may also connect associative 3-folds in $\mathcal{S}^7$ to complex geometry in the following manner.

\begin{prop}\label{holocurveprop}
Let $\bfu:\Sigma\rightarrow\C\P^3$ be a holomorphic curve.  The Hopf fibration of $\mathcal{S}^7$ over $\C\P^3$ induces a circle 
bundle $\mathcal{C}(\Sigma)$ over $\Sigma$.  Let $\bfx:\mathcal{C}(\Sigma)\rightarrow\mathcal{S}^7$ be such 
that the following diagram commutes:
$$
\xymatrix{
\mathcal{C}(\Sigma)\ar[r]^{\bfx}\ar[d] & 
\mathcal{S}^7 \ar[d]
\\
\Sigma\ar[r]^{\bfu}& \C\P^3.}
$$
Then $\bfx\big(\mathcal{C}(\Sigma)\big)$ is an associative 3-fold in $\mathcal{S}^7$.
\end{prop}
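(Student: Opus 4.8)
The plan is to reduce the claim, via cones, to the standard fact that a complex surface in $\C^4\cong\R^8$ is Cayley. First I would pass to the cone: writing $A=\bfx\big(\mathcal{C}(\Sigma)\big)$, Lemma \ref{Cayleyconelem} says that $A$ is associative in $\mathcal{S}^7$ precisely when the cone $CA=\R^+\times A\subseteq\R^8$ is Cayley. It therefore suffices to show that $CA$ is Cayley.

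Next I would identify $CA$ explicitly. The Hopf fibration $\mathcal{S}^7\to\C\P^3$ is the restriction to the unit sphere of the tautological projection $\pi:\C^4\setminus\{0\}\to\C\P^3$, and by the defining commutative diagram the image $A$ is exactly $\pi^{-1}\big(\bfu(\Sigma)\big)\cap\mathcal{S}^7$. Since $\pi^{-1}\big(\bfu(\Sigma)\big)$ is invariant under the $\C^*$-action on $\C^4$, and in particular under scaling by $\R^+$, its cone satisfies $CA=\pi^{-1}\big(\bfu(\Sigma)\big)$. Because $\bfu$ is holomorphic, $\bfu(\Sigma)$ is a complex curve in $\C\P^3$, so $CA$ is a complex (that is, $J$-invariant) submanifold of $\C^4$ of complex dimension $2$, away from the origin and the isolated branch points of $\bfu$, where it is a smooth complex surface.

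The heart of the argument is then the computation that any such complex surface $C$ is Cayley in its complex orientation. For this I would invoke the decomposition \eq{Phispliteq2}, $\Phi_0=\tfrac12\,\omega_0\w\omega_0+\Re\Omega_0$. By Wirtinger's theorem, a complex surface is calibrated by $\tfrac12\,\omega_0\w\omega_0$, so $\tfrac12\,\omega_0\w\omega_0|_C=\vol_C$ with respect to the complex orientation; in particular this orientation makes $\Phi_0|_C>0$, as required for the Cayley condition. On the other hand $\Omega_0=\d z_1\w\d z_2\w\d z_3\w\d z_4$ is of type $(4,0)$, whereas the $(1,0)$-part of $T_pC\otimes\C$ has complex dimension only $2$; hence the restriction $\Omega_0|_C$, and with it $\Re\Omega_0|_C$, vanishes identically. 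Combining the two, $\Phi_0|_C=\vol_C$, so $C$ is Cayley by Definition \ref{Cayleydfn}. Taking $C=CA$ and running the equivalence of Lemma \ref{Cayleyconelem} backwards then shows $A$ is associative, completing the argument.

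Beyond this, the only genuine content is organisational: confirming that holomorphicity of $\bfu$ is exactly what forces $CA$ to be $J$-invariant, and that the complex orientation is the Cayley-compatible one. The two facts about $\Phi_0$ on a complex surface — the Wirtinger calibration of $\tfrac12\,\omega_0\w\omega_0$ and the vanishing of the $(4,0)$-form $\Omega_0$ for dimension reasons — are standard, so I expect the main (if minor) obstacle to be setting up the identification $CA=\pi^{-1}(\bfu(\Sigma))$ cleanly enough to transport the complex structure and orientation through it.
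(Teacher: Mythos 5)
Your proposal is correct and follows essentially the same route as the paper: the paper likewise identifies the cone on $\bfx\big(\mathcal{C}(\Sigma)\big)$ with the complex cone in $\C^4$ over the holomorphic curve, observes that a complex surface $S$ satisfies $\frac{1}{2}\,\omega_0\w\omega_0|_S=\vol_S$ and $\Omega_0|_S\equiv 0$ so that $S$ is Cayley by \eq{Phispliteq2}, and concludes via Lemma \ref{Cayleyconelem}. Your write-up merely supplies more detail (Wirtinger's theorem, the type argument for $\Omega_0$, and the explicit identification $CA=\pi^{-1}(\bfu(\Sigma))$) for steps the paper states in a single sentence.
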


\begin{proof}
A holomorphic curve in $\C\P^3$ is the (complex) link of a complex cone in $\C^4$.  Complex surfaces $S$ in $\C^4$ are Cayley in $\R^8$ 
by \eq{Phispliteq2} and Definition \ref{Cayleydfn} since, in the notation of Definition \ref{phisdfn}, $\frac{1}{2}\,\omega_0\w\omega_0|_S=\vol_S$ and $\Omega_0|_S\equiv 0$.
The result follows from Lemma \ref{Cayleyconelem}.
\end{proof}

\begin{remark}
Proposition \ref{holocurveprop} simply states that the (real) link in $\mathcal{S}^7$ of a complex 2-dimensional cone 
in $\C^4$ is associative.
\end{remark}

The associative 3-folds we have defined using $\mathcal{S}^6$ or complex geometry depend locally on either 4 functions of 1 variable
 or 2 functions of 2 variables, so we would expect that the generic associative 3-fold does not arise from these geometries.

\begin{remarks}
There is some overlap between the examples of associative 3-folds in $\mathcal{S}^7$ given by Propositions 
\ref{assocS6prop}-\ref{holocurveprop}.  In particular, we have the following. 
\begin{itemize}
\item[(a)]  A minimal Legendrian surface in a totally geodesic $\mathcal{S}^5$ in $\mathcal{S}^6$ 
is a pseudoholomorphic curve, and so defines an associative 3-fold by Proposition \ref{assocS6prop}(a) which is
 also minimal Legendrian in $\mathcal{S}^7$. 
\item[(b)] A holomorphic curve in a totally geodesic $\C\P^2$ in $\C\P^3$ defines an associative 3-fold by 
Proposition \ref{holocurveprop} which lies in a totally geodesic $\mathcal{S}^6$ and so is Lagrangian by 
Proposition \ref{assocS6prop}(b).
\end{itemize}
\end{remarks}

One might ask about the relationship between associative geometry and the Hopf fibration 
$\mathcal{S}^3\hookrightarrow\mathcal{S}^7\rightarrow\mathcal{S}^4$, which results from viewing $\mathcal{S}^7$ as the 
unit sphere in $\H^2$ and $\mathcal{S}^4\cong\H\P^1$.   
If $\pi:\mathcal{S}^7\rightarrow\mathcal{S}^4$ is the projection and
 $\pi(A)$ is a point, then $A$ is a totally geodesic $\mathcal{S}^3$.  If $\pi(A)$ is a surface, then it is 
the projection of $A$ under 
the fibration $\mathcal{S}^1\hookrightarrow\mathcal{S}^7\rightarrow\C\P^3\rightarrow\mathcal{S}^4$.  
Thus $A$ must be the Hopf lift of a \emph{horizontal} holomorphic curve in $\C\P^3$.  We can view this horizontal holomorphic curve as a 
``twistor lift'' of the surface $\pi(A)$ in $\mathcal{S}^4$ to $\C\P^3$ (c.f.~\cite{Bryant4sphere}).

\section[The structure equations]{The structure equations}\label{struct}

To study associative submanifolds of $\mathcal{S}^7$ we shall think of the 7-sphere as the homogeneous space 
$\Spin(7)/\GG_2$.  Since we are considering $\mathcal{S}^7$ with its $\GG_2$ structure, we may view $\Spin(7)$ as the $\GG_2$ frame 
bundle over $\mathcal{S}^7$.  Therefore, we shall need the structure equations of $\Spin(7)$.

We begin by recalling the following result \cite[Proposition 1.1]{BryantOct}.

\begin{prop}\label{spin7prop}
Extend the elements of\/ $\Spin(7)\subseteq\End(\O)$ complex linearly so that $\Spin(7)\subseteq\End(\C\otimes_{\R}\O)$.  
Using the standard basis of\/ $\C\otimes_{\R}\O$ to represent $\End_{\C}(\C\otimes_{\R}\O)$ as the $8\times 8$ complex-valued 
matrices, the Lie algebra $\spin7$ of $\Spin(7)$ has the following matrix presentation:
\begin{align*}
\spin7&=\left\{\left(\begin{array}{cccc} i\rho & -\bar{\mathfrak{h}}^{\rm T} & 0 & -\theta^{\rm T}\\
\mathfrak{h} & \kappa & \theta & [\bar{\theta}] \\
0 & -\bar{\theta}^{\rm T} & -i\rho & -\mathfrak{h}^{\rm T} \\
\bar{\theta} & [\theta] & \bar{\mathfrak{h}} & \bar{\kappa}\end{array}\right)\,:\,\begin{array}{l}\mathfrak{h},\theta\in
\MM_{3\times 1}(\C),\\[2pt]
\rho\in\R,\,\kappa\in\MM_{3\times 3}(\C),\\[2pt]
\kappa=-\bar{\kappa}^{\rm T},\,\Tr\kappa=-i\rho
\end{array}\right\}
\end{align*}
where 
\begin{equation}\label{sqbrkteq}
[(x\;y\;z)^{\rm T}]=\left(\begin{array}{ccc} 0 & z & -y \\
-z & 0 & x \\
y & -x & 0\end{array}\right).
\end{equation}
\end{prop}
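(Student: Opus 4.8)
The plan is to characterise $\spin7$ intrinsically as a space of infinitesimal octonionic automorphisms and then to read off the matrix presentation in a basis of $\C\otimes_{\R}\O$ adapted to the complex structure on $\O$. Since $\Spin(7)$ is by definition the stabiliser of $\Phi_0$ in $\GL(8,\R)$, and this stabiliser already sits inside $\SO(8)$, the Lie algebra is the annihilator $\spin7=\{A\in\End(\O):A\cdot\Phi_0=0\}$. Using Definition \ref{crossprodsdfn} together with $\Phi_0(x,y,z,w)=g_{\R^8}(x\times y\times z,w)$, this is equivalent to asking that $A\in\mathfrak{so}(8)$ be a derivation of the triple cross product, i.e.\ $A(x\times y\times z)=Ax\times y\times z+x\times Ay\times z+x\times y\times Az$. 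Everything then reduces to solving this single linear condition.

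The decisive choice is the basis. Fixing a unit imaginary octonion splits $\O=\C\oplus\C^3$, where the $\C$-factor is spanned by the identity and the chosen imaginary unit, and $\C^3$ is the standard representation of the subgroup $\SU(3)\subseteq\GG_2$ fixing that unit. Complexifying gives $\C\otimes_{\R}\C=\C\oplus\overline{\C}$ and $\C\otimes_{\R}\C^3=\C^3\oplus\overline{\C^3}$, which is exactly the $1+3+1+3$ decomposition of the proposition, with complex conjugation interchanging blocks $1\leftrightarrow 3$ and $2\leftrightarrow 4$. Two general constraints are then immediate: first, elements of $\spin7$ are real operators, so conjugation-equivariance forces blocks $3,4$ to be the conjugates of blocks $1,2$; second, $\Spin(7)\subseteq\SO(8)$ forces each $A$ to be skew-Hermitian for the Hermitian inner product making this basis unitary. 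I expect a short check to show that these two conditions alone already produce the \emph{entire} block shape displayed, leaving free only: the purely imaginary scalar $i\rho$ and a skew-Hermitian $\kappa$ on the diagonal; the vectors $\mathfrak{h},\theta\in\C^3$ (with the coupling between the $(1,4)$ and $(2,3)$ blocks automatic); and a free complex skew-symmetric matrix in position $(2,4)$. This is a copy of $\mathfrak{so}(8)$, of real dimension $28$, with the diagonal blocks forming a $\mathfrak{u}(3)$.

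It remains to impose the derivation condition, and I expect it to do exactly two things. Restricted to the $\C^3$-summand the triple cross product is governed by the complex cross product on $\C^3$, whose matrix is precisely the operator $[\,\cdot\,]$ of \eq{sqbrkteq}; writing the derivation identity on basis triples should force the free $(2,4)$ block to equal $[\bar\theta]$ (hence $(4,2)=[\theta]$), tying it to the vector $\theta$, and should impose the trace relation $\Tr\kappa=-i\rho$ coupling the two diagonal blocks. These are $6+1=7$ conditions, so the solution space has dimension $28-7=21=\dim\Spin(7)$, and it consists of precisely the displayed matrices. Since the annihilator of $\Phi_0$ is automatically a Lie subalgebra, no separate verification of closure under the bracket is needed.

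The hard part will be the bookkeeping in this last step: expanding the derivation identity in the $\C\oplus\C^3$ basis and extracting the cross-product matrices $[\theta]$ and $[\bar\theta]$ with the correct signs, this being the only place where the non-associativity of octonion multiplication genuinely enters, as opposed to the generic $\mathfrak{so}(8)$ skeleton fixed in the previous step. The trace relation $\Tr\kappa=-i\rho$ is a subtler secondary output, reflecting how the $\U(1)$ generated by the chosen complex structure acts diagonally across both the $\C$- and $\C^3$-factors. Fixing an explicit octonion multiplication table at the outset, matched to the coordinates $(z_1,z_2,z_3,z_4)$ of Definition \ref{phisdfn}, would render this computation mechanical, if tedious.
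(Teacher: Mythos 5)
The paper offers no proof of this statement: it is quoted verbatim from Bryant \cite[Proposition 1.1]{BryantOct}, so there is nothing internal to compare your argument against line by line. Your outline is nonetheless the standard derivation and is essentially sound. The identification $\spin7=\{A\in\mathfrak{so}(8):A\text{ is a derivation of }x\times y\times z\}$ is correct (the stabiliser of $\Phi_0$ determines $g_{\R^8}$, so it lies in $\SO(8)$, and the annihilator of a tensor is automatically a subalgebra); the $1+3+\bar 1+\bar 3$ block decomposition coming from the complex structure $z_1=x_0+ix_1,\ldots$ of Definition \ref{phisdfn} is exactly the basis $(\bfv_0\;\bfv_1\;\bfv_2\;\bfv_3\;\bar\bfv_0\;\ldots)$ the paper itself uses later; and your count $28-7=21$ is right (the reality and skew-Hermitian conditions give $P\in\mathfrak{u}(4)$ on the diagonal blocks and a complex skew-symmetric $Q$ off-diagonal, $16+12=28$ real dimensions, cut down by the $6$ real conditions tying the $(2,4)$ block to $[\bar\theta]$ and the $1$ condition $\Tr\kappa=-i\rho$).

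The one genuine gap is logical rather than strategic: the dimension count does not by itself certify that the derivation identity imposes \emph{those particular} seven conditions rather than some other seven; any codimension-$7$ subspace of $\mathfrak{so}(8)$ would pass the count. Since you have not expanded the derivation identity, the claim that the $(2,4)$ block is forced to be $[\bar\theta]$ (with that sign, which depends on the chosen multiplication table) and that the diagonal coupling is exactly $\Tr\kappa=-i\rho$ remains an expectation. The economical way to close this is to reverse the direction: verify that each of the $21$ displayed generators annihilates $\Phi_0$ (equivalently, using \eq{Phispliteq2}, that the $\theta=0$ part preserves $\omega_0$ and $\Re\Omega_0$, which exhibits $\mathfrak{su}(4)$, and that the six $\theta$-generators kill $\Phi_0$ by a direct evaluation), then conclude equality from $\dim\spin7=21$. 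That replaces the full expansion of the derivation identity by a finite, one-directional check and removes the only soft spot in your argument.
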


This presentation of $\spin7$ is not currently conducive to the study of associative 3-folds.  We thus make the following 
substitutions:
\begin{gather}
\mathfrak{h}=\frac{1}{2}\left(\begin{array}{c}
(\omega_1+\alpha_1)+i(\omega_2+\alpha_2)\\
(\beta^5_3+\textstyle\frac{4}{3}\,\eta_6)+i(-\beta^6_3+\textstyle\frac{4}{3}\,\eta_5)\\
(\beta^4_3+\textstyle\frac{4}{3}\,\eta_7)+i(-\beta^7_3+\textstyle\frac{4}{3}\,\eta_4)
\end{array}\right);\label{subseq1}\\[2pt]\displaybreak[0]
\theta=\frac{1}{2}\left(\begin{array}{c} 
(\omega_1-\alpha_1)+i(\omega_2-\alpha_2)\\
(-\beta^5_3+\textstyle\frac{2}{3}\,\eta_6)+i(\beta^6_3+\textstyle\frac{2}{3}\,\eta_5)\\
(-\beta^4_3+\textstyle\frac{2}{3}\,\eta_7)+i(\beta^7_3+\textstyle\frac{2}{3}\,\eta_4)
\end{array}\right);\label{subseq2}\displaybreak[0]
\\[2pt]
\label{subseq3} 
\kappa_{11}=-i\alpha_3;
\quad
\kappa_{22}=\textstyle\frac{i}{2}\,(\alpha_3-\omega_3+\gamma_3);
\quad
\kappa_{33}=\textstyle\frac{i}{2}\,(\alpha_3-\omega_3-\gamma_3);
\\[2pt]
\kappa_{21}=\textstyle\frac{1}{2}\,(\beta^6_1+\beta^5_2)+\textstyle\frac{i}{2}\,(\beta^5_1-\beta^6_2);\label{subseq7}\qquad 
\kappa_{31}=\textstyle\frac{1}{2}\,(\beta^7_1+\beta^4_2)+\textstyle\frac{i}{2}\,(\beta^4_1-\beta^7_2);
\\[2pt]
\kappa_{32}=-\textstyle\frac{1}{2}\,\gamma_1-\textstyle\frac{i}{2}\,\gamma_2;\quad\rho=\omega_3,\label{subseq9}
\end{gather}
for real numbers $\omega_j,\alpha_j,\gamma_j,\eta_a,\beta^a_j$ for $j=1,2,3$ and $a=4,5,6,7$ such that
\begin{align}\label{betasymeq1}
&\beta^4_1+\beta^7_2+\beta^6_3=0, &\beta^5_1+\beta^6_2-\beta^7_3=0,\\
&\beta^6_1-\beta^5_2-\beta^4_3=0, &\beta^7_1-\beta^4_2+\beta^5_3=0.\label{betasymeq2}
\end{align}  We deduce the following.

\begin{prop}\label{spin7prop2} Let the index $j$ range from $1$ to $3$ and the index\/ $a$ range from $4$ to $7$.  
We may write the Lie algebra $\spin7$ of\/ $\Spin(7)\subseteq\GL(8,\R)$
as:
\begin{align*}
\spin7&=\left\{\left(\begin{array}{ccc}0 & -\omega^{\rm T} & -\eta^{\rm T} \\
\omega & [\alpha] & -\beta^{\rm T}-\frac{1}{3}\{\eta\}^{\rm T} \\\
\eta & \beta+\frac{1}{3}\{\eta\} & \frac{1}{2}[\alpha-\omega]_++\frac{1}{2}[\gamma]_-  \end{array}\right)
\,:\, \begin{array}{c}\omega=(\omega_j),\alpha=(\alpha_j),\\\gamma=(\gamma_j)\in\MM_{3\times 1}(\R),\\
\eta=(\eta_a)\in\MM_{4\times 1}(\R),\\ \beta=(\beta^a_j)\in\MM_{4\times 3}(\R),\end{array}\right.\\
&\qquad\qquad\qquad\qquad\qquad\qquad \begin{array}{cc}\beta^4_1+\beta^7_2+\beta^6_3=0, &\beta^5_1+\beta^6_2-\beta^7_3=0,\\
\beta^6_1-\beta^5_2-\beta^4_3=0, &\beta^7_1-\beta^4_2+\beta^5_3=0\end{array} \Bigg\},
\end{align*}
where $[(x\;y\;z)^{\rm T}]$ is defined in \eq{sqbrkteq},
\begin{align}
[(x\;y\;z)^{\rm T}]_{\pm}&=\left(\begin{array}{cccc} 0 & -x & -y & \pm z \\
x & 0 & z & \pm y \\
y & -z & 0 & \mp x \\
\mp z & \mp y & \pm x & 0\end{array}\right)\label{sqbrktpmeq}
\intertext{and}
\{(p\;q\;r\;s)^{\rm T}\}&=\left(\begin{array}{ccc} -q & -r & s \\
p & s & r \\
-s & p & -q \\
r & -q & -p \end{array}\right)\label{curlbrkteq}
\end{align}
\end{prop}
The notation $[\,]_{\pm}$ reflects the splitting of $\mathfrak{so}(4)\cong\Lambda^2(\R^4)^*$ into positive and negative subspaces
 (or, equivalently, self-dual and anti-self-dual 2-forms). The conditions on $\beta$ and the symmetries of $\{\,\}$ in 
\eq{curlbrkteq} are related to the cross product on $\Im\O$ defined by $\varphi_0$ given in Definition \ref{phisdfn}. 

Let $\gg:\Spin(7)\rightarrow\GL(8,\R)$ take $\Spin(7)$ to the identity component of the Lie subgroup of $\GL(8,\R)$ 
with Lie algebra $\spin7$.  Write $\gg=(\bfx\;\bfe\;\bff)$ where, for each $p\in\Spin(7)$, $\bfx(p)\in\MM_{8\times 1}(\R)$, $\bfe(p)=(\bfe_1\;\bfe_2\;\bfe_3)(p)\in
\MM_{8\times 3}(\R)$ and $\bff(p)=(\bff_4\;\bff_5\;\bff_6\;\bff_7)(p)\in\MM_{8\times 4}(\R)$.  The Maurer--Cartan form $\phi=\gg^{-1}\d\gg$ 
takes values in $\spin7$, so it can be written as
\begin{equation}\label{phispin7eq}
\phi=\left(\begin{array}{ccc}0 & -\omega^{\rm T} & -\eta^{\rm T} \\
\omega & [\alpha] & -\beta^{\rm T}-\frac{1}{3}\{\eta\}^{\rm T} \\\
\eta & \beta+\frac{1}{3}\{\eta\} & \frac{1}{2}[\alpha-\omega]_++\frac{1}{2}[\gamma]_-  \end{array}\right)
 \end{equation}
for appropriate matrix-valued 1-forms $\omega$, $\eta$, $\alpha$, $\beta$ and $\gamma$.  Moreover, if $\times$ is the cross product 
on $\mathcal{S}^7$ determined by the $\GG_2$ structure as in Definition \ref{crossprodsdfn}, then the symmetries of $\beta$ given in \eq{betasymeq1}-\eq{betasymeq2} can be 
expressed neatly as
$$\sum_{i=1}^3\bfe_i\times(\bff\beta)_j=0\quad\text{for $j=1,2,3$,}$$
using the obvious notation for components of $\bff\beta$.

An associative 3-fold $A$ in $\mathcal{S}^7$ may be locally lifted to $\Spin(7)$ by 
choosing suitably adapted $\GG_2$ frames locally on $A$.  
It is clear that we may adapt frames such that $\bfx$, $\bfe$ and $\omega$ are 
identified with a point in $A$, an orthonormal frame and orthonormal coframe for $A$ respectively.  We thus recognise $\bff$ as an
 orthonormal frame for the normal space to $A$ in $\mathcal{S}^7$ and see that $\eta$ vanishes on $A$. 

From $\d\gg=\gg\phi$ and the Maurer--Cartan equation $\d\phi+\phi\w\phi=0$, we derive the \emph{first} and 
\emph{second} \emph{structure equations} for the adapted frame bundle of an associative 3-fold.

\begin{prop}\label{structprop1}
Let $\gg=(\bfx\;\bfe\;\bff):\Spin(7)\rightarrow\GL(8,\R)$ as described above.
  Write $\phi=\gg^{-1}\d\gg$ as in \eq{phispin7eq} such that $\phi$ takes values in $\spin7$ and  
recall $[\,]_{\pm}$ given in \eq{sqbrktpmeq}.  
On the adapted frame bundle of an associative 3-fold $A$ 
 in $\mathcal{S}^7$, $\bfx:A\rightarrow\mathcal{S}^7$ and $\{\bfe_1,\bfe_2,\bfe_3,
 \bff_4,\bff_5,\bff_6,\bff_7\}$ is a local oriented orthonormal basis for $TA\oplus NA$, so the first structure equations are:
\begin{align}
\d\bfx &= \bfe\omega;\label{poseq}\\
\d\bfe &= -\bfx\omega^{\rm T}+\bfe[\alpha]+\bff\beta;\label{tgteq}\\
\d\bff &= -\bfe\beta^{\rm T}+\textstyle\frac{1}{2}\bff([\alpha-\omega]_++[\gamma]_-).\label{normeq}
\end{align}
\end{prop}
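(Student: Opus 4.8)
The plan is to read the first structure equations straight off the defining relation for the Maurer--Cartan form, making no appeal to the Maurer--Cartan equation itself (that identity is what one needs for the \emph{second} structure equations, not the first). First I would rewrite $\phi=\gg^{-1}\d\gg$ as $\d\gg=\gg\phi$, and then exploit the block decomposition $\gg=(\bfx\;\bfe\;\bff)$, in which $\bfx$, $\bfe$ and $\bff$ occupy the $1$, $3$ and $4$ columns matching the $1+3+4$ block structure of $\phi$ in \eq{phispin7eq}. Expanding the product $\gg\phi$ one column-block at a time, the first column-block of $\d\gg$ is $\bfx\cdot 0+\bfe\,\omega+\bff\,\eta$, the middle three columns are $-\bfx\,\omega^{\rm T}+\bfe\,[\alpha]+\bff(\beta+\frac13\{\eta\})$, and the last four columns are $-\bfx\,\eta^{\rm T}-\bfe(\beta+\frac13\{\eta\})^{\rm T}+\frac12\bff([\alpha-\omega]_++[\gamma]_-)$, where the grouping in the last block uses $-\bfe\,\beta^{\rm T}-\frac13\bfe\{\eta\}^{\rm T}=-\bfe(\beta+\frac13\{\eta\})^{\rm T}$. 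Matching these three blocks against $(\d\bfx\;\d\bfe\;\d\bff)$ yields the general first structure equations verbatim.

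Next I would specialise to the adapted frame bundle of an associative $A$. Here $\bfx$ is literally the inclusion $A\hookrightarrow\mathcal{S}^7$, so the pullback of $\d\bfx$ is everywhere tangent to $A$, i.e.\ it lies in the span of $\bfe_1,\bfe_2,\bfe_3$. Comparing with $\d\bfx=\bfe\,\omega+\bff\,\eta$ and using that $\bff_4,\ldots,\bff_7$ frame the normal space forces the normal part $\bff\,\eta$ to vanish; since $\bff$ is a frame, this gives $\eta\equiv 0$ on the adapted bundle (the fact already recorded before the proposition). Substituting $\eta=0$, which also annihilates the auxiliary term $\{\eta\}$ built from $\eta$ via \eq{curlbrkteq}, collapses the three general equations to exactly \eq{poseq}--\eq{normeq}.

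There is no genuine obstacle here: the content is entirely the linear-algebraic bookkeeping of a single block matrix product, so the only thing demanding care is consistency of the transpose and sign conventions between \eq{phispin7eq} and the columns of $\gg$, together with the elementary observation that an adapted frame forces $\eta=0$. In short, the proposition is a direct transcription of $\d\gg=\gg\phi$, followed by the frame normalisation $\eta\equiv 0$ on $A$.
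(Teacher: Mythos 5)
Your proposal is correct and coincides with the paper's own (largely implicit) derivation: the general equations are precisely the $1+3+4$ block expansion of $\d\gg=\gg\phi$, and the specialisation to an associative $A$ uses the observation, recorded in the paper just before the proposition, that adapting frames forces $\eta$ (and hence $\{\eta\}$) to vanish. No gaps.
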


\begin{prop}\label{structprop2}
Use the notation of Proposition \ref{structprop1}.  
On the adapted frame bundle of an associative 3-fold in $\mathcal{S}^7$, there exists a local tensor of functions $h=h^a_{jk}=h^a_{kj}$, 
for $1\leq j,k\leq 3$ and $4\leq a\leq 7$, 
 such that the second structure equations are:
\begin{align}
\d\omega&=-[\alpha]\w\omega;\label{connectioneq}\\
\beta&=h\omega;\label{secondfunformeq}\\
\d[\alpha]&=-[\alpha]\w[\alpha]+\omega\w\omega^{\rm T}+\beta^{\rm T}\w\beta;\label{Gausseq}\\
\d\beta&=-\beta\w[\alpha]-\textstyle\frac{1}{2}([\alpha-\omega]_++[\gamma]_-)\w\beta;\label{Codazzieq}\\
\textstyle\frac{1}{2}\,\d([\alpha-\omega]_++[\gamma]_-)&=-\textstyle\frac{1}{4}[\alpha-\omega]_+\w[\alpha-\omega]_+-\textstyle\frac{1}{4}[\gamma]_-\w[\gamma]_-+\beta\w\beta^{\rm T}.\label{Riccieq}
\end{align}
\end{prop}

\noindent We see from \eq{tgteq} and \eq{connectioneq} that $[\alpha]$ defines the Levi-Civita connection of an associative 3-fold.  
Moreover, \eq{normeq} shows that $\frac{1}{2}([\alpha-\omega]_++[\gamma]_-)$ defines the induced connection on 
the normal bundle of $A$ in $\mathcal{S}^7$.

\begin{notes}
If we set $\hh=(\bfv_0\;\bfv_1\;\bfv_2\;\bfv_3\;\bar{\bfv}_0\;\bar{\bfv}_1\;\bar{\bfv}_2\;\bar{\bfv}_3)$
 where 
 $$\bfv_0=\textstyle\frac{1}{2}\,(\bfx-i\bfe_3),\quad
\bfv_1=\textstyle\frac{1}{2}\,(\bfe_1-i\bfe_2),\quad \bfv_2=\textstyle\frac{1}{2}\,(\bff_6-i\bff_5), \quad\bfv_3=\textstyle\frac{1}{2}\,(\bff_7-i\bff_4),  $$
then $\hh^{-1}\d\hh=\psi$ takes values in $\spin7$ as given in Proposition \ref{spin7prop} and may be written as
\begin{equation}\label{psieq}
\psi=\left(\begin{array}{cccc} i\rho & -\bar{\mathfrak{h}}^{\rm T} & 0 & -\theta^{\rm T}\\
\mathfrak{h} & \kappa & \theta & [\bar{\theta}] \\
0 & -\bar{\theta}^{\rm T} & -i\rho & -\mathfrak{h}^{\rm T} \\
\bar{\theta} & [\theta] & \bar{\mathfrak{h}} & \bar{\kappa}\end{array}\right),
\end{equation}
where $\mathfrak{h}$, $\theta$, $\kappa$ and $\rho$ are given by 
inverting our substitutions \eq{subseq1}-\eq{subseq9}.  Thus, we can recover the structure equations for $\Spin(7)$ given in
 \cite{BryantOct} from $\d\hh=\hh\psi$ and $\d\psi+\psi\w\psi=0$.  
This form for the structure equations will be invaluable in $\S$\ref{chens}-\ref{isom}.
\end{notes}

\subsection[The second fundamental form]{The second fundamental form}

The equations \eq{tgteq} and \eq{secondfunformeq} show that $\beta$ encodes the \emph{second fundamental form} of $A$, essentially 
given by the tensor of functions $h$.  We discuss this formally.    

\begin{dfn}\label{sffdfn}
Let $A$ be an associative 3-fold in $\mathcal{S}^7$ and use the notation of Propositions \ref{structprop1}-\ref{structprop2}.  
The \emph{second fundamental form} $\II_A\in C^{\infty}(S^2T^*\!A; NA)$ of $A$ can be written locally, using 
summation notation, as
$$\II_A=h^a_{jk}\bff_a\otimes\omega_j\omega_k$$
 for a tensor of functions $h$ such that $h^a_{jk}=h^a_{kj}$, $a=4,5,6,7$, $j,k=1,2,3$.  The tensor $h$ is the same tensor that appears in \eq{secondfunformeq} and satisfies the further symmetry conditions for all $j$:
\begin{align}
\label{hsymeq1}
&h^4_{1j}+h^7_{2j}+h^6_{3j}=0;&& h^5_{1j}+h^6_{2j}-h^7_{3j}=0;\\
\label{hsymeq2}
&h^6_{1j}-h^5_{2j}-h^4_{3j}=0;&& h^7_{1j}-h^4_{2j}+h^5_{3j}=0.
\end{align}
These symmetry conditions follow from \eq{betasymeq1}-\eq{betasymeq2} and are equivalent to:
$$\sum_{i=1}^3\bfe_i\times\II_A(\bfe_i,\bfe_j)=0$$
for $j=1,2,3$, using the cross product on $\mathcal{S}^7$.
\end{dfn}

\begin{remark}
The symmetry conditions \eq{hsymeq1}-\eq{hsymeq2} imply that associative 3-folds in $\mathcal{S}^7$ are minimal.  This confirms our earlier result in Corollary \ref{minrealanalcor}.
\end{remark}

We can interpret \eq{Gausseq} as the \emph{Gauss equation} for an associative 3-fold $A$ in $\mathcal{S}^7$ and 
\eq{Codazzieq} as the \emph{Codazzi equation}; i.e.~\eq{Gausseq} relates the Riemann 
curvature to $\II_A$ and \eq{Codazzieq} imposes conditions on the derivative of $\II_A$.  We can also view \eq{Riccieq} as the 
\emph{Ricci equation} relating the curvature of the normal connection to $\II_A$.   

\subsection[Reductions of the structure equations]{Reductions of the structure equations}

We now observe that an associative 3-fold in $\mathcal{S}^7$ which arises from the geometry of $\mathcal{S}^6$ or $\C^4$
 must have certain symmetries in its structure equations.

\begin{note}
In this subsection, and in the remainder of the paper, we will use the notation of Propositions \ref{structprop1}-\ref{structprop2}.
\end{note}

\begin{ex}{\bf (Products)}\label{prodsex}  Suppose that $A$ is an associative 3-fold in $\mathcal{S}^7$ constructed from a pseudoholomorphic
curve $\Sigma$ in $\mathcal{S}^6$ as in Proposition \ref{assocS6prop}(a).  Then a frame for $T\mathcal{S}^7|_A$ may be chosen such that, 
for some function $\lambda$, 
$$\alpha_1=\lambda\omega_2,\quad\alpha_2=-\lambda\omega_1,\quad\text{and}\quad\beta^4_3=\beta^5_3=\beta^6_3=\beta^7_3=0.$$
The latter condition is equivalent to $\II_A(\bfe_3,.)=0$ and implies, by \eq{betasymeq1}-\eq{betasymeq2}, that $\beta^4_2=\beta^7_1$,
 $\beta^5_2=\beta^6_1$, $\beta^6_2=-\beta^5_1$ and $\beta^7_2=-\beta^4_1$.  Here, $\d\omega_3=0$, so that $\bfe_3$ defines the product 
 $\mathcal{S}^1$ direction orthogonal to $\Sigma$. 
 \end{ex}

\begin{ex}{\bf (Lagrangians)}\label{Lagsex} Suppose that $L$ is a Lagrangian submanifold in a totally geodesic $\mathcal{S}^6$ in $\mathcal{S}^7$.  Then $L$ is associative
by Proposition \ref{assocS6prop}(b) and a frame for $T\mathcal{S}^7|_L$ may be chosen such that 
$$\beta^7_1=\beta^7_2=\beta^7_3=0\quad\text{and}\quad\gamma=\alpha-\omega.$$
 In this case, $\bff_7$ is the direction orthogonal to the totally geodesic $\mathcal{S}^6$ containing $L$, and the equations 
\eq{betasymeq1}-\eq{betasymeq2} are equivalent to the statement that
\begin{equation}\label{betaLeq}
\beta_L=\left(\begin{array}{rrr} -\beta^6_1 & -\beta^6_2 & -\beta^6_3\\
\beta^5_1 & \beta^5_2 & \beta^5_3 \\
\beta^4_1 & \beta^4_2 & \beta^4_3 \end{array}\right)
\end{equation}
is a symmetric trace-free matrix of 1-forms.
\end{ex}

\begin{ex}{\bf (Minimal Legendrians)}\label{minLegsex} Suppose that $L$ is a minimal Legendrian submanifold of $\mathcal{S}^7$.  Then $L$ is 
associative by Proposition \ref{minLegprop} and there is a frame for $T\mathcal{S}^7|_L$ such that
$$\beta^7_1=\beta^7_2=\beta^7_3=0\quad\text{and}\quad\gamma=\alpha+\omega.$$
If we define $\beta_L$ as in \eq{betaLeq}, the conditions \eq{betasymeq1}-\eq{betasymeq2} correspond to $\beta_L$ being 
symmetric and trace-free as in Example \ref{Lagsex}.
\end{ex}

\begin{ex}{\bf (Links of complex cones)}\label{holocurveex}
Let $A$ be an associative 3-fold constructed from a holomorphic curve $\Sigma$ 
in $\C\P^3$ as in Proposition \ref{holocurveprop}.  Then 
there is a choice of frame for $T\mathcal{S}^7|_A$ such that
$$\alpha_1=\omega_1,\quad\alpha_2=\omega_2,\quad\text{and}\quad\beta^4_3=\beta^5_3=\beta^6_3=\beta^7_3=0.$$
As in Example \ref{prodsex}, we see that $\II_A(\bfe_3,.)=0$ and $\beta^4_2=\beta^7_1$,
 $\beta^5_2=\beta^6_1$, $\beta^6_2=-\beta^5_1$ and $\beta^7_2=-\beta^4_1$.  Here, $\bfe_3$ defines the direction of the 
 circle fibres of $A$ over $\Sigma$.
\end{ex}

\section[Homogeneous examples]{Homogeneous examples}\label{orbits}

In this section we classify the associative 3-folds in $\mathcal{S}^7$
which arise as the orbits of closed 3-dimensional Lie subgroups $\GG$
 of $\Spin(7)$.
  This is the analogue of the work in \cite{Mashimo} on 
  homogeneous (Lagrangian) links in $\mathcal{S}^6$ of coassociative cones.
  
The most obvious homogeneous associative 3-fold is a totally geodesic 3-sphere as in the following example.

\begin{ex}\label{simpleex}
Let $A_0\subseteq\mathcal{S}^7$ be given by
$$A_0=\{(x_0,x_1,x_2,x_3,0,0,0,0)\in\R^8\,:\,x_0^2+x_1^2+x_2^2+x_3^2=1\}.$$
Then $A_0$ is an associative 3-sphere in $\mathcal{S}^7$ which is invariant under the action of a $\SU(2)^3/\Z_2$ subgroup of $\Spin(7)$ by \cite[Proposition 12.4.2]{JoyceRiem}.  Moreover, $A_0$ is totally geodesic, so has constant curvature 1.
\end{ex}

By \cite[Proposition 12.4.2]{JoyceRiem} we have the following straightforward result.

\begin{prop}\label{simpleprop}
Up to rigid motion, $A_0$ given in Example \ref{simpleex} is the unique totally geodesic 
associative $\mathcal{S}^3$ in $\mathcal{S}^7$.
\end{prop}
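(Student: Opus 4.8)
The plan is to pass to the Riemannian cone and reduce the statement to a classification of Cayley $4$-planes, which is controlled by the transitive action of $\Spin(7)$ on such planes.

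First I would observe that a totally geodesic $\mathcal{S}^3$ in $\mathcal{S}^7$ is precisely the intersection $V\cap\mathcal{S}^7$ with an oriented $4$-dimensional linear subspace $V$ of $\R^8$, and that its Riemannian cone $CA=\R^+\times A$ from Definition \ref{conedfn} is just the punctured $4$-plane $V\setminus\{0\}$. By Lemma \ref{Cayleyconelem}, $A$ is associative in $\mathcal{S}^7$ if and only if $CA$ is Cayley in $\R^8$; since $CA$ is a linear subspace, Definition \ref{Cayleydfn} shows this holds if and only if $V$ is an oriented \emph{Cayley $4$-plane}, i.e.\ $\Phi_0|_V=\vol_V$. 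Thus classifying simple associative $3$-folds up to rigid motion is equivalent to classifying oriented Cayley $4$-planes in $\R^8$ up to the action of $\Spin(7)$.

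Next I would check that the $4$-plane $V_0=\{(x_0,x_1,x_2,x_3,0,0,0,0)\}$ underlying $A_0$ is Cayley: inspecting $\Phi_0$ in Definition \ref{phisdfn}, the only monomial $\d\bfx_{ijkl}$ whose indices all lie in $\{0,1,2,3\}$ is $\d\bfx_{0123}$, which occurs with coefficient $+1$, so $\Phi_0|_{V_0}=\d\bfx_{0123}=\vol_{V_0}$ and $V_0$ is an oriented Cayley $4$-plane. The crux of the argument is then the fact, due to Harvey and Lawson and recorded in \cite[Proposition 12.4.2]{JoyceRiem}, that $\Spin(7)$ acts transitively on the set of oriented Cayley $4$-planes in $\R^8$ (with stabiliser isomorphic to $(\SU(2)\times\SU(2)\times\SU(2))/\Z_2$, consistent with the symmetry group of $A_0$ in Example \ref{simpleex}). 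Granting this, any oriented Cayley $4$-plane $V$ is carried to $V_0$ by some $g\in\Spin(7)$; since $g$ preserves $\Phi_0$ it preserves, by Definition \ref{spin7structuredfn}, the round metric and the induced $\GG_2$ structure on $\mathcal{S}^7$, so it restricts to a rigid motion of $\mathcal{S}^7$ sending the corresponding simple associative $3$-fold to $A_0$.

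I expect the only genuine obstacle to be the transitivity statement itself, but this is exactly the content of the cited proposition, so no independent work is needed; the remaining steps are the elementary cone reduction via Lemma \ref{Cayleyconelem} and the one-line verification that $V_0$ is Cayley.
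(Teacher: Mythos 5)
Your argument is correct and is essentially the paper's own proof: the paper simply cites \cite[Proposition 12.4.2]{JoyceRiem} (the transitivity of $\Spin(7)$ on oriented Cayley 4-planes with stabiliser $(\SU(2)\times\SU(2)\times\SU(2))/\Z_2$) and calls the result straightforward, and your cone reduction via Lemma \ref{Cayleyconelem} together with the check that $V_0$ is Cayley is exactly the routine bookkeeping being suppressed there. No gaps.
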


Subgroups of $\Spin(7)$ which fix a real direction in $\R^8$
are isomorphic to $\GG_2$.  Therefore, if $\GG$ acts trivially on an $\R$
factor in $\R^8$ it arises as a subgroup of $\GG_2$, and so associative $\GG$-orbits are Lagrangian in a totally geodesic 
$\mathcal{S}^6$.  These are classified by 
\cite{Mashimo}, so we need only consider the case where $\GG$ acts fully on $\R^8$.

We clearly have a $\U(1)^3$ subgroup of $\Spin(7)$ which acts on $\C^4$, and this is the only $\U(1)^3$ subgroup 
up to conjugation.  
 Suppose now that $\GG$ is a subgroup of $\Spin(7)$ which is isomorphic to $\SU(2)$ or $\SO(3)$.  
We have an irreducible representation over the reals $\rho_i$ of $\SU(2)$ on $\R^i$ for $i=3,\ldots,8$.  Therefore, the possible subgroups $\GG$
 can only have one of the following representations:
\begin{equation*}
\rho_3\oplus\rho_5;\quad \rho_4\oplus\rho_4;\quad\text{and}\quad
\rho_8.
\end{equation*}

Suppose $\GG$ corresponds to the representation $\rho_3\oplus\rho_5$. Since $\GG\subseteq\Spin(7)$ and no $\SU(2)$ subgroup of
 $\Spin(7)$ can preserve a decomposition of $\R^8=\R^3\oplus\R^5$ without fixing a real direction, we have a contradiction to the
 assumption that $\GG$ acts fully on $\R^8$.  The second representation
 $\rho_4\oplus\rho_4$ corresponds to the ``diagonal'' action of $\SU(2)$
 on $\C^2\oplus\C^2\cong\C^4$, and $\rho_8$ corresponds to the induced
 action of $\SU(2)$ on $S^3\C^2\cong\C^4$ from the standard action on $\C^2$.
  
We therefore have the following subgroups to consider.

\medskip

\noindent \textbf{(i)} $\U(1)^3$ acting on $\C^4$ as
\begin{equation}\label{u1actioneq}
\left(\begin{array}{c} z_1\\z_2\\z_3\\z_4 \end{array}\right)\mapsto \left(\begin{array}{c}
e^{i\theta_1}z_1 \\ e^{i\theta_2}z_2\\ e^{i\theta_3}z_3\\ e^{i\theta_4}z_4\end{array}\right) \quad \begin{array}{l}\text{for $\theta_1,\theta_2,\theta_3,\theta_4\in\R$}\\ \text{such that $\theta_1+\theta_2+\theta_3+\theta_4=0$.}\end{array}\end{equation}

\noindent \textbf{(ii)} $\SU(2)$ acting on $\C^4$ as
\begin{equation}\label{su2action1eq}
\left(\begin{array}{c} z_1\\z_2\\z_3\\z_4 \end{array}\right)\mapsto \left(\begin{array}{r} az_1+bz_2 \\ -\bar{b}z_1+\bar{a}z_2 \\ az_3+bz_4 \\ -\bar{b}z_3+\bar{a}z_4\end{array}\right)
\quad\begin{array}{l}\text{for $a,b\in\C$}\\ \text{such that $|a|^2+|b|^2=1$.}\end{array}
\end{equation}

\noindent\textbf{(iii)} $\SU(2)$ acting on $\C^4$ as  
\begin{align}\label{su2action2eq}
\left(\begin{array}{c}z_1\\ z_2\\ z_3\\ z_4\end{array}\right)
&\mapsto \left(\begin{array}{c}a^3z_1+\sqrt{3}a^2bz_2
+\sqrt{3}ab^2z_3+b^3z_4\\ -\sqrt{3}a^2\bar{b}z_1+a(|a|^2-2|b|^2)z_2+b(2|a|^2-|b|^2)z_3+\sqrt{3}\bar{a}b^2z_4\\
\sqrt{3}a\bar{b}^2z_1-\bar{b}(2|a|^2-|b|^2)z_2+\bar{a}(|a|^2-2|b|^2)z_3+\sqrt{3}\bar{a}^2bz_4\\
-\bar{b}^3z_1+\sqrt{3}\bar{a}\bar{b}^2z_2-\sqrt{3}\bar{a}^2\bar{b}z_3+\bar{a}^3z_4 \end{array}\right)
\end{align}
for $a,b\in\C$ such that $|a|^2+|b|^2=1$.

\subsection[U(1)3 orbits]{{\boldmath $\U(1)^3$} orbits}

Let $A$ be an associative 3-fold which is an orbit of the action given in
\eq{u1actioneq} and identify $\R^8\cong\C^4$ as in Definition \ref{phisdfn}.  By considering the Lie algebra associated with the group action \eq{u1actioneq}, it is straightforward to see that the tangent space to the cone $CA$ on $A$ at a point $(z_1,z_2,z_3,z_4)\in A$ is spanned by the vectors:
\begin{align*}X_0&=(z_1,z_2,z_3,z_4),& X_1&=(iz_1,0,0,-iz_4),\\ X_2&=(0,iz_2,0,-iz_4),& X_3&=(0,0,iz_3,-iz_4).\end{align*}
Recall the decomposition \eq{Phispliteq2}.  Clearly 
$\omega_0(X_j,X_k)=0$ for $j,k\in\{1,2,3\}$ and so $\omega_0\w\omega_0|_{CA}\equiv 0$.  Thus $CA$ is Cayley if and only if it is special Lagrangian in $\C^4$ by \eq{Phispliteq2}.
  However, the $\U(1)^3$-invariant special Lagrangian 4-folds in $\C^4$ are
   classified in \cite[$\S$III.3.A]{HarLaw} and there is a unique (up to rigid motion) $\U(1)^3$-invariant, non-planar, 
special Lagrangian cone in $\C^4$ given by
\begin{align*}
\{(z_1,z_2,z_3,z_4)\in\C^4\,&:\,|z_1|=|z_2|=|z_3|=|z_4|, \\
&\qquad\qquad\qquad\qquad\,\Re(z_1z_2z_3z_4)=0,
\,\Im(z_1z_2z_3z_4)>0\}.
\end{align*}
This gives us the following example of an associative 3-fold in $\mathcal{S}^7$.

\begin{ex}\label{u1ex}
Let $A_1\subseteq\mathcal{S}^7$ be given by:
\begin{align*}
A_1=\Big\{\textstyle\frac{1}{2}(e^{i\theta_1},e^{i\theta_2},e^{i\theta_3},e^{i\theta_4})\in\C^4\,&:\,
\theta_1,\theta_2,\theta_3,\theta_4\in\R \nonumber\\
&\quad\text{such that}\;\,\theta_1+\theta_2+\theta_3+\theta_4=\frac{\pi}{2}\Big\}.
\end{align*}
Then $A_1$ is a minimal Legendrian, hence associative, 3-fold in $\mathcal{S}^7$
which is invariant under the action of $\U(1)^3$ given in \eq{u1actioneq}.   Moreover, it is straightforward to see that $A_1$ is a 3-torus with constant curvature $0$.
\end{ex}

We deduce the following result.

\begin{prop}\label{u1prop}
Up to rigid motion, $A_1\cong T^3$ given in Example \ref{u1ex} is the unique, connected, non-totally geodesic, $\U(1)^3$-invariant associative 3-fold
in $\mathcal{S}^7$, where the $\U(1)^3$ action is given in \eq{u1actioneq}. 
\end{prop}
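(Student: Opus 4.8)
The plan is to prove Proposition \ref{u1prop}: up to rigid motion, $A_1$ is the unique connected, non-simple, $\U(1)^3$-invariant associative 3-fold in $\mathcal{S}^7$ under the action \eq{u1actioneq}. The key point is that almost all the analytic work has already been assembled in the paragraph preceding Example \ref{u1ex}, so the proof should amount to carefully harvesting that discussion and then accounting for connectedness, simplicity, and the ``up to rigid motion'' clause.

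**Main argument.** First I would let $A$ be any connected $\U(1)^3$-invariant associative 3-fold in $\mathcal{S}^7$, with the $\U(1)^3$ action as in \eq{u1actioneq}, and form its cone $CA = \R^+\times A \subseteq \C^4$, which is $\U(1)^3$-invariant and, by Lemma \ref{Cayleyconelem}, is Cayley in $\R^8$. The computation already recorded shows that the Lie-algebra vector fields $X_1,X_2,X_3$ spanning the tangent space to $CA$ satisfy $\omega_0(X_j,X_k)=0$, so $\omega_0\w\omega_0|_{CA}\equiv 0$, and hence by the decomposition \eq{Phispliteq2} the Cayley condition $\Phi_0|_{CA}=\vol_{CA}$ forces $\Re\Omega_0|_{CA}=\vol_{CA}$ with $\omega_0|_{CA}\equiv 0$; that is, $CA$ is special Lagrangian in $\C^4$. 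At this stage I would invoke the Harvey--Lawson classification \cite[$\S$III.3.A]{HarLaw} of $\U(1)^3$-invariant special Lagrangian 4-folds, which gives exactly one non-planar $\U(1)^3$-invariant special Lagrangian \emph{cone} (up to rigid motion), namely the one displayed before Example \ref{u1ex}. Passing back to links via Lemma \ref{Cayleyconelem}, its link is precisely $A_1$ of Example \ref{u1ex}.

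**Handling the remaining cases.** The classification in \cite[$\S$III.3.A]{HarLaw} also covers the \emph{planar} invariant special Lagrangian cones, and these correspond to the degenerate possibility that $CA$ lies in a special Lagrangian (hence affine) 4-plane; the link of such a cone in $\mathcal{S}^7$ is a totally geodesic $\mathcal{S}^3$, i.e.\ a simple associative 3-fold. Since we have excluded simple $A$ by hypothesis, this case does not occur, and I would spell out this reduction to show that non-simplicity is exactly what rules out the planar cones. Connectedness is used to ensure that $A$ is a single orbit (or the closure of one) rather than a disjoint union, so that the uniqueness statement for orbits applies verbatim; I would note that the full $\U(1)^3$-orbit of a generic point of the special Lagrangian cone sweeps out all of $A_1$, confirming $A_1\cong T^3$ with constant curvature $0$ as already computed in Example \ref{u1ex}.

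**Main obstacle.** I do not expect a serious analytic difficulty: the hard input — both the vanishing $\omega_0\w\omega_0|_{CA}\equiv 0$ and the classification of invariant special Lagrangian cones — is supplied by the cited computation and by Harvey--Lawson. The one point requiring genuine care is the bookkeeping that translates the Harvey--Lawson result, phrased for special Lagrangian 4-folds in $\C^4$, into the statement for links in $\mathcal{S}^7$, and in particular making sure that ``non-planar cone'' on the $\C^4$ side matches ``non-simple associative 3-fold'' on the $\mathcal{S}^7$ side via Lemma \ref{Cayleyconelem} and Proposition \ref{simpleprop}. Getting the ``up to rigid motion'' equivalence to descend correctly from cones to links — i.e.\ checking that the relevant ambient symmetries of $\C^4$ restrict to rigid motions of $\mathcal{S}^7$ preserving the $\GG_2$ structure — is where I would concentrate the proof's attention.
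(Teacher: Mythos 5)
Your proposal is correct and follows essentially the same route as the paper: pass to the cone $CA$, use the Lie-algebra computation to show $\omega_0\w\omega_0|_{CA}\equiv 0$ so that by \eq{Phispliteq2} the Cayley condition reduces to $CA$ being special Lagrangian, and then invoke the Harvey--Lawson classification of $\U(1)^3$-invariant special Lagrangian cones, with the planar case accounting for the simple (totally geodesic) associative 3-fold. The extra care you propose about matching ``non-planar'' with ``non-simple'' and descending rigid motions from $\C^4$ to $\mathcal{S}^7$ is sensible but does not change the argument, which the paper presents in the discussion immediately preceding Example \ref{u1ex}.
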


\begin{remark} The orbit $A_1$ is fibred by circles over the $\U(1)^2$-invariant minimal Legendrian 2-torus in 
a totally geodesic $\mathcal{S}^5$ in $\mathcal{S}^7$.  
\end{remark}

\subsection[SU(2) orbits 1]{{\boldmath $\SU(2)$} orbits 1}

Let $A$ be an associative 3-fold in $\mathcal{S}^7$ invariant under the 
``diagonal'' $\SU(2)$ action given in \eq{su2action1eq}, where we identify $\C^4\cong\R^8$.  We may easily calculate the Lie algebra associated with \eq{su2action1eq} and deduce that
the tangent space to $A$ at a point $(z_1,z_2,z_3,z_4)$ is thus spanned by the vectors
\begin{align*}
X_1=(z_2,-z_1,z_4,-z_3),\quad X_2=(iz_2,iz_1,iz_4,iz_3),\quad X_3=(iz_1,-iz_2,iz_3,-iz_4).
\end{align*}
By \cite[Lemmas 5.3 \& 5.6]{Mashimo}, using the $\SU(2)$ action we can ensure that the induced metric $g_A$ on $A$ from the round metric on $\mathcal{S}^7$ is given by
 $\lambda_1\omega_1^2+\lambda_2\omega_2^2+\lambda_3\omega_3^2$, where the $\omega_j$ form an orthonormal coframe for $A$ and
 $\lambda_j=|X_j|^2$.  However, $|X_j|^2=1$ for all $j$, thus $A$ is a constant curvature 1 submanifold of the unit 7-sphere.  We
 deduce the following result.

\begin{prop}\label{su2action1prop}  Any associative orbit in $\mathcal{S}^7$ of the $\SU(2)$ action
 given in \eq{su2action1eq} is totally geodesic.
\end{prop}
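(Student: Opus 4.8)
The plan is to use the fact that a $3$-dimensional orbit of $\SU(2)$ has discrete stabiliser, so that $A\cong\SU(2)/\Gamma$ and the three fundamental vector fields $X_1,X_2,X_3$ of the action \eq{su2action1eq} frame $TA$ at every point. Since $\SU(2)$ acts by isometries of $\mathcal{S}^7$, the induced metric on $A$ is $\SU(2)$-invariant, so the whole question reduces to identifying this homogeneous metric. First I would record, as in the discussion above, that the $X_j$ are exactly the vectors listed and that their Lie brackets reproduce (up to sign) the relations $[U_2,U_3]=2U_1$, $[U_3,U_1]=2U_2$, $[U_1,U_2]=2U_3$ of the generators.

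The key computation is to evaluate the induced inner products $g_A(X_i,X_j)=\Re\langle X_i,X_j\rangle_{\C^4}$ at a point $(z_1,z_2,z_3,z_4)\in A\subseteq\mathcal{S}^7$. Using the normalisation $|z_1|^2+|z_2|^2+|z_3|^2+|z_4|^2=1$, a short calculation gives $|X_1|^2=|X_2|^2=|X_3|^2=1$, while each off-diagonal Hermitian product is purely imaginary and so vanishes after taking real parts; hence $\{X_1,X_2,X_3\}$ is a global orthonormal frame. By the reduction of \cite[Lemmas 5.3 \& 5.6]{Mashimo} the metric is $\lambda_1\omega_1^2+\lambda_2\omega_2^2+\lambda_3\omega_3^2$ with $\lambda_j=|X_j|^2$, so here $g_A=\omega_1^2+\omega_2^2+\omega_3^2$. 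With the bracket relations above this is the bi-invariant metric on $\SU(2)$, namely the round metric of constant curvature $1$ on $\mathcal{S}^3$.

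The remaining, and most delicate, step is to pass from constant curvature $1$ to totally geodesic. Since $A$ is associative it is minimal (Corollary \ref{minrealanalcor}, equivalently the symmetries \eq{hsymeq1}-\eq{hsymeq2}). For a minimal $3$-fold in the unit sphere the Gauss equation \eq{Gausseq} yields scalar curvature $R=6-|\II_A|^2$; constant curvature $1$ forces $R=6$, hence $\II_A\equiv 0$ and $A$ is totally geodesic. A connected totally geodesic submanifold that is a full (hence closed) orbit is an entire great $\mathcal{S}^3$, so $A$ is simple, and up to rigid motion $A=A_0$ by Proposition \ref{simpleprop}. The only genuine subtlety is ensuring the homogeneous metric really is the equal-$\lambda_j$ case — which is precisely what the norm computation delivers — and checking that the scalar-curvature identity (or, alternatively, a direct appeal to Theorem \ref{ccthm} with $\kappa=1$) closes the argument; I expect this final curvature-to-geodesic implication, rather than the tangent-space computation, to be where care is needed.
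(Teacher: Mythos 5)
Your proposal is correct and follows essentially the same route as the paper: compute the fundamental vector fields $X_1,X_2,X_3$ of the action \eq{su2action1eq}, observe via \cite[Lemmas 5.3 \& 5.6]{Mashimo} that the induced metric is $\lambda_1\omega_1^2+\lambda_2\omega_2^2+\lambda_3\omega_3^2$ with $\lambda_j=|X_j|^2=1$, and conclude that $A$ is totally geodesic. The only difference is that you make explicit the final implication (constant curvature $1$ plus minimality forces $\II_A\equiv 0$ via the Gauss equation), which the paper simply asserts, and you verify the orthogonality of the $X_j$ directly rather than invoking Mashimo's frame adaptation; both additions are sound.
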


\begin{note} The non-singular Cayley 4-folds which are invariant under the 
action given in \eq{su2action1eq} were classified in \cite[Theorem 5.3]{Lotaysym} (and, by other means, in \cite{GuPries}).  One can use the proofs of these results to deduce that Cayley cones invariant under \eq{su2action1eq} are unions of Cayley 4-planes, from which Proposition \ref{su2action1prop} follows.
\end{note}

\subsection[SU(2) orbits 2]{{\boldmath $\SU(2)$} orbits 2}

Let $A$ be an associative orbit of the action given in
\eq{su2action2eq} and identify $\R^8\cong\C^4$.  The following matrices provide a basis for the associated Lie algebra to 
\eq{su2action2eq}:
\begin{gather*}
U_1=\left(\begin{array}{cccc} 0 & \sqrt{3} & 0& 0 \\ -\sqrt{3} & 0& 2& 0 \\ 0& -2 & 0& \sqrt{3} \\ 0& 0& -\sqrt{3}& 0\end{array}\right);\qquad
U_2=\left(\begin{array}{cccc} 0 & \sqrt{3}i & 0& 0 \\ \sqrt{3}i & 0& 2i & 0 \\ 0& 2i & 0& \sqrt{3}i \\ 0& 0& \sqrt{3}i& 0\end{array}\right);\displaybreak[0]\\
U_3=\left(\begin{array}{cccc} 3i & 0& 0& 0 \\ 0& i& 0& 0 \\ 0& 0& -i& 0 \\ 0& 0& 0& -3i\end{array}\right).
\end{gather*}
Notice that $[U_2,U_3]=2U_1$, $[U_3,U_1]=2U_2$ and $[U_1,U_2]=2U_3$.
We see immediately that the tangent space to $A$ at $(z_1,z_2,z_3,z_4)$ is spanned by the vectors:
\begin{align*}
X_1&=(\sqrt{3}z_2,-\sqrt{3}z_1+2z_3,-2z_2+\sqrt{3}z_4,-\sqrt{3}z_3);\\ X_2&=(i\sqrt{3}z_2,i\sqrt{3}z_1+2iz_3,2iz_2+i\sqrt{3}z_4,i\sqrt{3}z_3);\displaybreak[0]\\ X_3&=(3iz_1,iz_2,-iz_3,-3iz_4).\end{align*}
By \cite[Lemmas 5.3 \& 5.6]{Mashimo}, we can use the $\SU(2)$ action to ensure that the induced metric $g_A$ on $A$ is given by $\lambda_1\omega_1^2+\lambda_2\omega_2^2+\lambda_3\omega_3^2$, where $\{\omega_1,\omega_2,\omega_3\}$ is an orthonormal coframe for $A$ and
\begin{align}
\lambda_1&=|X_1|^2=4(|z_2|^2+|z_3|^2)-4\sqrt{3}\Re(z_1\bar{z}_3+z_2\bar{z}_4)+3;\label{lambdaeq1}\\
\lambda_2&=|X_2|^2=4(|z_2|^2+|z_3|^2)+4\sqrt{3}\Re(z_1\bar{z}_3+z_2\bar{z}_4)+3;\label{lambdaeq2}\\
\lambda_3&=|X_3|^2=8(|z_1|^2+|z_4|^2)+1,\label{lambdaeq3}
\end{align}
using the fact that $|z_1|^2+|z_2|^2+|z_3|^2+|z_4|^2=1$.  The orthogonality of $X_1$, $X_2$ and $X_3$ forces
\begin{align}
z_1\bar{z}_2-z_3\bar{z}_4=0\quad\text{and}\quad\label{orthogsu2eq1}
\Im(z_1\bar{z}_3+z_2\bar{z}_4)=0.
\end{align}

Let 
\begin{align}
\zeta_1&=3|z_4|^2+|z_3|^2-|z_2|^2-3|z_1|^2;\label{zeta1eq}\\
\zeta_2&=2\sqrt{3}z_1\bar{z}_2+4z_2\bar{z}_3+2\sqrt{3}z_3\bar{z}_4;\label{zeta2eq}\\
\zeta_3&=36z_1^2z_4^2-12z_2^2z_3^2+16\sqrt{3}z_1z_3^3+16\sqrt{3}z_2^3z_4-72z_1z_2z_3z_4.\label{zeta3eq}
\end{align}
It is straightforward to see, using the equation \eq{Phispliteq2} for $\Phi_0$, that, if $CA$ is the cone on $A$, then 
$$\Phi_0|_{CA}=\frac{|\zeta_1|^2+|\zeta_2|^2+\Re\zeta_3}{\sqrt{\lambda_1\lambda_2\lambda_3}}\vol_{CA}$$
 at the point $(z_1,z_2,z_3,z_4)$.  Thus $CA$ is Cayley if and only if
\begin{equation}\label{volsu2eq}
\sqrt{\lambda_1\lambda_2\lambda_3}=|\zeta_1|^2+|\zeta_2|^2+\Re\zeta_3.
\end{equation}
Recall that the Cayley condition on $CA$ is also given by the vanishing 
of the 4-forms $\tau_j$, $j=1,\ldots,7$, given in Definition \ref{taudfn}.  The vanishing of these forms is equivalent to the following equations:
\begin{align}
\Im\zeta_3&=0;\label{tausu2eq1}\\
8(z_1z_2+\bar{z}_3\bar{z}_4)\zeta_1+4(\bar{z}_2\bar{z}_4-z_1z_3)\zeta_2 + 4\sqrt{3}(z_1^2-\bar{z}_4^2)\bar{\zeta}_2&=0;\label{tausu2eq2}\\
4(\sqrt{3}z_2^2-\sqrt{3}\bar{z}_3^2+z_1z_3-\bar{z}_2\bar{z}_4)\zeta_1\qquad\qquad\qquad\qquad\quad&\nonumber\\
-4\sqrt{3}\Re(z_2z_3
+z_1z_4)\zeta_2+8(z_1z_2+\bar{z}_3\bar{z}_4)\bar{\zeta}_2&=0;\label{tausu2eq3}\\
2(3z_2z_3+5\bar{z}_2\bar{z}_3+6z_1z_4-6\bar{z}_1\bar{z}_4)\zeta_1\qquad\qquad\qquad\qquad&\nonumber\\
+4(\bar{z}_2^2-\sqrt{3}z_2z_4)\zeta_2
-4(\bar{z}_3^2-\sqrt{3}z_1z_3)\bar{\zeta}_2&=0.\label{tausu2eq4}
\end{align}

We now make a quick observation.

\begin{lem}\label{sporbitslem}
Let $A$ be an associative orbit through $(z_1,z_2,z_3,z_4)\in\C^4$ of the $\SU(2)$ action in \eq{su2action2eq}, 
where we identify $\C^4$ with $\R^8$ as in Definition \ref{phisdfn}.  Let $\zeta_1,\zeta_2,\zeta_3$ be given by \eq{zeta1eq}-\eq{zeta3eq}.   
\begin{itemize}
\item[\emph{(a)}] $A$ is minimal Legendrian in $\mathcal{S}^7$ if and only if $\zeta_1=\zeta_2=\Im\zeta_3=0$.
\item[\emph{(b)}] $A$ is the link of a complex cone in $\C^4$ if and only if $\zeta_3=0$.
\end{itemize}
\end{lem}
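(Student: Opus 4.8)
The plan is to characterise the two geometric conditions in terms of the defining data $\zeta_1,\zeta_2,\zeta_3$ using the decompositions of $\Phi_0$ recorded earlier. For part (a), I would recall from Definition \ref{S7contactdfn} that the contact form $\gamma$ on $\mathcal{S}^7$ comes from the K\"ahler form $\omega_0$ on $\C^4$, so that the Legendrian condition $\gamma|_A\equiv 0$ is equivalent to $\omega_0$ vanishing on the tangent space to the cone $CA$. Thus I would first compute $\omega_0(X_i,X_j)$ for the spanning vectors $X_1,X_2,X_3$ together with the radial direction $X_0=(z_1,z_2,z_3,z_4)$, and verify that the resulting pairings are controlled by $\zeta_1$ and $\zeta_2$. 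Combined with Proposition \ref{minLegprop} and Definition \ref{taudfn}, where $\tau_1=\Im\Omega_0$ and the remaining $\tau_j$ involve $\omega_0\w(\cdots)$, the minimal Legendrian condition becomes $\omega_0|_{CA}\equiv 0$ together with $\Im\Omega_0|_{CA}\equiv 0$; the latter is exactly $\Im\zeta_3=0$ by the identification $\tau_1=\Im\Omega_0$ and the formula \eq{zeta3eq}.

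\textbf{Part (a) in detail.} More precisely, I would show that $\omega_0|_{CA}\equiv 0$ is equivalent to $\zeta_1=\zeta_2=0$. The cleanest route is to observe that since $CA$ is already assumed associative (so that \eq{volsu2eq} holds and the $\tau_j$ vanish for $j=2,\ldots,7$), the only extra content of the Legendrian condition is $\omega_0|_{CA}\equiv 0$. The expressions \eq{zeta1eq}--\eq{zeta2eq} are built precisely so that $\zeta_1$ measures $\omega_0(X_0,X_3)$-type pairings and $\zeta_2$ measures $\omega_0(X_1,X_2)$-type pairings; I would verify this by direct substitution of the $U_j$ into $\omega_0=\frac{i}{2}\sum \d z_k\w\d\bar z_k$, using the orthogonality relations \eq{orthogsu2eq1} to simplify. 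Granting $\zeta_1=\zeta_2=0$ forces $\omega_0|_{CA}\equiv 0$, and then by Proposition \ref{minLegprop} the additional requirement for minimal Legendrian (equivalently, special Lagrangian) is $\Re\Omega_0|_{CA}=\vol_{CA}$ with $\Im\Omega_0|_{CA}\equiv 0$; since $A$ is already associative, $\Im\Omega_0|_{CA}\equiv 0$ reduces to $\Im\zeta_3=0$, giving the stated equivalence.

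\textbf{Part (b) in detail.} For part (b), I would use the decomposition \eq{Phispliteq2} and the proof of Proposition \ref{holocurveprop}: a cone is the link of a complex (holomorphic) cone exactly when its tangent spaces are complex subspaces of $\C^4$, equivalently when $\Omega_0|_{CA}\equiv 0$ while $\frac{1}{2}\omega_0\w\omega_0|_{CA}=\vol_{CA}$. Now $\Omega_0|_{CA}$ is a complex scalar multiple of $\vol_{CA}$, and from the definition \eq{zeta3eq} this scalar is exactly $\zeta_3$ (up to normalisation by $\sqrt{\lambda_1\lambda_2\lambda_3}$); hence $\Omega_0|_{CA}\equiv 0$ is equivalent to $\zeta_3=0$. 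Conversely, if $\zeta_3=0$ then $\Re\zeta_3=0$, and the Cayley condition \eq{volsu2eq} forces $\sqrt{\lambda_1\lambda_2\lambda_3}=|\zeta_1|^2+|\zeta_2|^2$, which makes $\frac{1}{2}\omega_0\w\omega_0|_{CA}$ calibrate $CA$; together these say $CA$ is a complex cone, so its link is the link of a complex cone in $\C^4$.

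\textbf{Main obstacle.} The genuinely computational heart is confirming that the algebraically defined quantities $\zeta_1,\zeta_2,\zeta_3$ in \eq{zeta1eq}--\eq{zeta3eq} really do compute the restrictions of $\omega_0$ and $\Omega_0$ to $CA$ in the precise sense above; this requires evaluating $\omega_0$ and $\Omega_0$ on the explicit tangent frame $X_0,X_1,X_2,X_3$ and simplifying under the standing constraints \eq{orthogsu2eq1} and $\sum|z_k|^2=1$. Most of this bookkeeping has already been carried out implicitly in deriving \eq{volsu2eq} and \eq{tausu2eq1}--\eq{tausu2eq4}, so I expect the lemma to follow by extracting the relevant real and imaginary parts from those identities rather than by a fresh computation; the only subtlety is matching normalisations so that the vanishing of $\zeta_1,\zeta_2$ corresponds exactly to $\omega_0|_{CA}\equiv 0$ and the vanishing of $\zeta_3$ to $\Omega_0|_{CA}\equiv 0$.
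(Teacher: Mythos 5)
Your proposal is correct and follows essentially the same route as the paper, whose proof consists precisely of the assertions that $\omega_0|_{CA}\equiv 0$ if and only if $\zeta_1=\zeta_2=0$ and $\Omega_0|_{CA}=0$ if and only if $\zeta_3=0$, from which both parts follow via the special Lagrangian and complex characterisations you invoke. Your additional care in part (b) — using the Cayley hypothesis and \eq{volsu2eq} to upgrade $\Omega_0|_{CA}\equiv 0$ to the statement that $CA$ is calibrated by $\frac{1}{2}\omega_0\w\omega_0$, hence genuinely complex — is a worthwhile point that the paper leaves implicit.
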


\begin{proof}
Let $\omega_0$ and $\Omega_0$ be the K\"ahler form and holomorphic volume form on $\C^4$ respectively.  Then $\omega_0|_{CA}\equiv 0$ if and only if $\zeta_1=\zeta_2=0$, and $\Omega_0|_{CA}=0$ if and only if $\zeta_3=0$. The result follows.
\end{proof}

As noted in \cite[Remark 5.4]{Mashimo}, it is possible to permute the $\lambda_j$ using the $\SU(2)$ action but still preserve the orthogonality of the $X_j$.  Thus the triple $(\lambda_1,\lambda_2,\lambda_3)$ is well-defined up to permutation and determines the metric on $A$.  We now have the following lemma.

\begin{lem}\label{lambdalem}
Let $A\subseteq\mathcal{S}^7$ be a connected, non-totally geodesic, associative orbit of the $\SU(2)$ action given in \eq{su2action2eq}. Up to permutation, $(\lambda_1,\lambda_2,\lambda_3)$ given by \eq{lambdaeq1}-\eq{lambdaeq3} is either $(3,3,9)$ or $(7,7,1)$.  Moreover, the associative 3-folds with 
 $(\lambda_1,\lambda_2,\lambda_3) =(3,3,9)$
 are orbits through points $(\cos\theta,0,0,\sin\theta)$ for some $\theta\in[0,\frac{\pi}{4}]$, and if $(\lambda_1,\lambda_2,\lambda_3)=(7,7,1)$ then $A$ is the orbit through $(0,\frac{1}{\sqrt{2}}, \frac{i}{\sqrt{2}},0)$.
\end{lem}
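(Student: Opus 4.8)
The plan is to turn the statement into a finite elimination in the coordinates $(z_1,z_2,z_3,z_4)$, exploiting the two coordinate slices that associativity will single out. By \cite[Lemmas 5.3 \& 5.6]{Mashimo} I may assume $\{X_1,X_2,X_3\}$ is orthogonal, so that the isometry type of the orbit is recorded by the unordered triple $(\lambda_1,\lambda_2,\lambda_3)$ of \eq{lambdaeq1}-\eq{lambdaeq3} and the relations \eq{orthogsu2eq1} hold. The first step, immediate from $|z_1|^2+|z_2|^2+|z_3|^2+|z_4|^2=1$, is the linear invariant $\lambda_1+\lambda_2+\lambda_3=15$. Setting $P=|z_2|^2+|z_3|^2$ and $R=\Re(z_1\bar z_3+z_2\bar z_4)$ — which equals $z_1\bar z_3+z_2\bar z_4$ by the second relation in \eq{orthogsu2eq1} — I would record $\lambda_3=9-8P$, $\lambda_1+\lambda_2=6+8P$ and $\lambda_2-\lambda_1=8\sqrt3\,R$, so that the triple is determined entirely by $(P,R)\in[0,1]\times\R$, and the two target triples $(3,3,9)$ and $(7,7,1)$ correspond to $R=0$ with $P=0$ and $P=1$ respectively.

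The heart of the matter is to show that associativity forces $R=0$ and $P\in\{0,\tfrac12,1\}$, the value $P=\tfrac12$ giving exactly the excluded totally geodesic orbit with triple $(5,5,5)$. To this end I would substitute the orthogonality relations \eq{orthogsu2eq1} into the Cayley equations \eq{tausu2eq1}-\eq{tausu2eq4} and the volume identity \eq{volsu2eq}, organising the algebra by recognising $(\zeta_1,\Re\zeta_2,\Im\zeta_2)$ as the $\mathfrak{su}(2)$-valued moment map for the action and $\zeta_3$ as a multiple of the discriminant of the binary cubic determined by $(z_1,z_2,z_3,z_4)\in S^3\C^2$. The decisive step is an elimination showing that, off the totally geodesic locus, every point of an associative orbit must satisfy either $z_2=z_3=0$ (hence $P=0$, $R=0$) or $z_1=z_4=0$ (hence $P=1$, $R=0$); equivalently, no point with three or four nonzero coordinates can occur. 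I expect this elimination — extracting the two slices from the four Cayley equations \eq{tausu2eq1}-\eq{tausu2eq4} — to be the main obstacle and to carry essentially all of the computational weight.

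Once the point is confined to a slice the rest is short. On $z_2=z_3=0$ the triple is $(3,3,9)$ and the only surviving Cayley conditions are $\Im(z_1^2z_4^2)=0$ and $\Im(z_1z_4)\,\zeta_1=0$ with $\zeta_1=3(|z_4|^2-|z_1|^2)$; reducing by the residual symmetries (the maximal torus $\diag(e^{i\phi},e^{-i\phi})$, the Weyl reflection $z_1\leftrightarrow z_4$, and the centre) brings the point to $(\cos\theta,0,0,\sin\theta)$ with $\theta\in[0,\tfrac\pi4]$, and a direct check via \eq{volsu2eq} confirms that each such $\theta$ does give an associative orbit, producing the stated $\mathcal{S}^1$-family. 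On $z_1=z_4=0$ the triple is $(7,7,1)$, and here $\zeta_1=|z_3|^2-|z_2|^2$, $\zeta_2=4z_2\bar z_3$, $\zeta_3=-12z_2^2z_3^2$; equations \eq{tausu2eq3}-\eq{tausu2eq4} then reduce, using $|z_2|^2-|z_3|^2=-\zeta_1$, to $\zeta_1\,\Im(z_2z_3)=0$ and $(z_2^2-\bar z_3^2)\zeta_1=4\Re(z_2z_3)\,z_2\bar z_3$. A short case analysis rules out $\zeta_1\neq0$ (the reduced equations then impose incompatible sign constraints once $|z_2|^2+|z_3|^2=1$ is used), forcing $|z_2|=|z_3|=\tfrac1{\sqrt2}$; the remaining requirements $\Im(z_2^2z_3^2)=0$ and $\Re(z_2z_3)=0$ then pin the orbit, up to symmetry, to the single point $(0,\tfrac1{\sqrt2},\tfrac{i}{\sqrt2},0)$, explaining why this metric type yields a unique orbit rather than a family.
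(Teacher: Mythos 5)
The framing, the invariant $\lambda_1+\lambda_2+\lambda_3=15$, and the two slice analyses are broadly sound, but the core of the lemma --- that an associative orbit must meet one of the loci $\{z_2=z_3=0\}$ or $\{z_1=z_4=0\}$ --- is only announced, not proved: you yourself flag this elimination as ``the main obstacle'' carrying ``essentially all of the computational weight''. That step is exactly what the lemma asserts, so as written the proposal has a genuine gap. The paper closes it not by a frontal elimination from \eq{tausu2eq1}--\eq{tausu2eq4} but by splitting on the multiplicities of the $\lambda_j$. If all three are distinct then $R=\Re(z_1\bar z_3+z_2\bar z_4)\neq 0$, so each of the pairs $(z_1,z_4)$ and $(z_2,z_3)$ contains a nonzero entry and \eq{orthogsu2eq1} forces $z_2=\mu z_4$, $z_3=\mu z_1$ for a single real $\mu$, with $\mu>\sqrt{3}$ coming from the assumed ordering of the $\lambda_j$; substituting into the Cayley equations then leaves a short system with no solutions. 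If two coincide, say $\lambda_1=\lambda_2$, then $R=0$, so by \eq{orthogsu2eq1} both $z_1\bar z_2-z_3\bar z_4$ and $z_1\bar z_3+z_2\bar z_4$ vanish; multiplying these identities against each other gives $|z_3|^2|z_4|^2=-|z_2|^2|z_4|^2$ and $|z_1|^2|z_2|^2=-|z_1|^2|z_3|^2$, whence $z_2=z_3=0$ or $z_1=z_4=0$ by pure algebra, \emph{without using the Cayley equations at all}. In particular your anticipated third value $P=\tfrac12$ never arises, and your proposal to discard it as ``the totally geodesic orbit'' would not be legitimate anyway: a $(5,5,5)$ orbit has curvature $\tfrac15$, not $1$, so it is not simple and is not excluded by the hypothesis.

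A smaller but real defect sits in your $(3,3,9)$ slice. The conditions $\Im(z_1^2z_4^2)=0$ and $\Im(z_1z_4)\,\zeta_1=0$ extracted from \eq{tausu2eq1} and \eq{tausu2eq4} characterise Cayley only up to orientation, and they admit the extra solutions $|z_1|=|z_4|$ with $z_1z_4$ purely imaginary. Since $z_1z_4$ is invariant up to sign under the residual torus and Weyl symmetry, such points cannot be normalised to $(\cos\theta,0,0,\sin\theta)$, so your reduction to normal form fails for them; they are calibrated by $-\Phi_0$ and must be discarded using the positivity in \eq{volsu2eq}, which is why the paper works with \eq{volsu2eq} directly on this slice (there it collapses immediately to $\Im(z_1z_4)=0$). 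Your $(7,7,1)$ analysis is correct and agrees with the paper's.
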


\begin{proof}
Suppose first, for a contradiction, that all of the $\lambda_j$ are distinct.  Since we are free to permute the $\lambda_j$ we may assume that $\lambda_3<\lambda_1<\lambda_2$.  From \eq{lambdaeq1} and \eq{lambdaeq2} we see that $\Re(z_1\bar{z}_3+z_2\bar{z}_4)>0$, 
 which means that at least one member of each of the pairs $(z_1,z_4)$ and $(z_2,z_3)$ is non-zero.  Therefore, by \eq{orthogsu2eq1}, there must exist some real number $\mu$ such that 
\begin{equation*}
z_2=\mu z_4\quad\text{and}\quad z_3=\mu z_1.
\end{equation*}
The fact that $\Re(z_1\bar{z}_3+z_2\bar{z}_4)>0$ means that $\mu>0$.  Furthermore,  $\lambda_3<\lambda_1$ only if $\mu>\sqrt{3}$, using \eq{lambdaeq1} and \eq{lambdaeq3}.  
 We deduce from \eq{tausu2eq2} that $\Im(z_1z_4)=0$.  Therefore the equations \eq{tausu2eq1}-\eq{tausu2eq4} are equivalent to the conditions:
\begin{align*}
\Im(z_1^4+z_4^4)&=0;&\Im(z_1z_4)&=0;\\
z_1^2+\bar{z}_4^2-\mu\sqrt{3}(\bar{z}_1^2+z_4^2)&=0;&
\Im\big((z_1^2+\bar{z}_4^2)z_1\bar{z}_4\big)&=0.
\end{align*}
Since $\mu>\sqrt{3}$ and $(z_1,z_4)\neq(0,0)$, 
we see that there are no solutions to these equations and we have reached our required contradiction.

Suppose now that at least two of the $\lambda_j$ are equal.  Then we may assume that $\lambda_1=\lambda_2$ by our earlier remarks. 
 Therefore from \eq{lambdaeq1}-\eq{lambdaeq2} we find that $\Re(z_1\bar{z}_3+z_2\bar{z}_4)=0$.  Combining this information with 
\eq{orthogsu2eq1} we have that
 $$z_1\bar{z}_2-z_3\bar{z}_4=0\quad\text{and}\quad z_1\bar{z}_3+z_2\bar{z}_4=0.$$
Thus,
$$z_1\bar{z}_2\bar{z}_3z_4=|z_3|^2|z_4|^2=-|z_2|^2|z_4|^2=0$$ 
and
$$\bar{z}_1z_2z_3\bar{z}_4=|z_1|^2|z_2|^2=-|z_1|^2|z_3|^2=0.$$
We deduce that either $z_2=z_3=0$ or $z_1=z_4=0$.  Thus the only possible triples $(\lambda_1,\lambda_2,\lambda_3)$ are $(3,3,9)$ and $(7,7,1)$.

For the first case, $z_2=z_3=0$ and $\sqrt{\lambda_1\lambda_2\lambda_3}=9$, so the Cayley condition \eq{volsu2eq} becomes:
$$9 = 9(|z_1|^2-|z_4|^2)^2+36\Re(z_1^2z_4^2).$$
Since $|z_1|^2+|z_4|^2=1$, we see that 
 \eq{volsu2eq} is equivalent to $\Im(z_1z_4)=0$.  Using the 
 action \eq{su2action2eq} we can ensure that both $z_1$ and $z_4$ are real and non-negative, and that $z_1\geq z_4$.  Since $|z_1|^2+|z_4|^2=1$, the result for $(\lambda_1,\lambda_2,\lambda_3)=(3,3,9)$ follows. 

For the second case, $z_1=z_4=0$ and $\sqrt{\lambda_1\lambda_2\lambda_3}=7$, so  we see from \eq{volsu2eq} and \eq{tausu2eq1} that
$$7=(|z_2|^2-|z_3|^2)^2+16|z_2|^2|z_3|^2-12\Re(z_2^2z_3^2)\quad\text{and}\quad \Im(z_2^2z_3^2)=0.$$
Therefore $\Re(z_2z_3)=0$ and $\Im(z_2z_3)=\pm\frac{1}{2}$.  Since $|z_2|^2+|z_3|^2=1$ we deduce that $|z_2|=|z_3|=\frac{1}{\sqrt{2}}$ and $z_3=\pm i\bar{z}_2$.  The result follows by using the $\SU(2)$ action in \eq{su2action2eq}.
\end{proof}

From this lemma we have the following examples of associative 3-folds in $\mathcal{S}^7$.

\begin{ex}\label{su2ex1}
Let $A_{2}(\theta)\subseteq\mathcal{S}^7$ be the orbit of the $\SU(2)$ action \eq{su2action2eq} through $z_\theta=(\cos\theta,0,0,\sin\theta)$, for  
$\theta\in[0,\frac{\pi}{4}]$.  Then $A_2({\theta})$ is associative and has an orthonormal coframe 
$\{\omega_1,\omega_2,\omega_3\}$ such that the induced metric on $A_2(\theta)$ is $3\omega_1^2+3\omega_2^2+9\omega_3^2$.   One sees from 
the action \eq{su2action2eq} that $z_\theta$ has $\Z_3$-stabilizer in $\SU(2)$.  Therefore 
$A_2({\theta})\cong\SU(2)/\Z_3$.  

Furthermore, by Lemma \ref{sporbitslem}, 
$A_2(\theta)$ is the link of a complex cone if and only if $\theta=0$ and it is minimal Legendrian if and only if $\theta=\frac{\pi}{4}$. However, rotation in the $(z_1,z_4)$-plane commutes with the $\SU(2)$ action in \eq{su2action2eq}, so all the $A_2(\theta)$ are congruent up to rigid motion to $A_2=A_2(0)$ which is $\U(2)$-invariant.  
\end{ex}

\begin{remarks}
The fact that $A_2(0)$ and $A_2(\frac{\pi}{4})$ are congruent up to rigid motion is a special case of the main result in \cite{Blair}.
 The minimal Legendrian $A_2(\frac{\pi}{4})$ was first found in \cite[$\S$3.4]{Marshall}.
Clearly $A_2(0)$ is the Hopf lift of the Veronese curve in $\C\P^3$, which is the constant curvature, degree 3 $\C\P^1$ in $\C\P^3$ given explicitly by
$$\{(x^3,\sqrt{3}x^2y,\sqrt{3}xy^2,y^3)\in\C\P^3\,:\,(x,y)\in\C\P^1\}.$$
 The minimal Legendrian $A_2(\frac{\pi}{4})$ is fibred by oriented geodesic circles over the Bor\r{u}vka sphere 
  in $\mathcal{S}^4$, which is a 2-sphere with constant curvature $\frac{1}{3}$. 
\end{remarks}

\begin{ex}\label{su2ex2}
Let $A_3\subseteq\mathcal{S}^7$ be the orbit of the $\SU(2)$ action \eq{su2action2eq} through
 $z=(0,\frac{1}{\sqrt{2}},\frac{i}{\sqrt{2}},0)$.  Then $A_3$ is associative and has an orthonormal coframe $\{\omega_1,\omega_2,\omega_3\}$ such that
 the metric on $A_{3}$ is $7\omega_1^2+7\omega_2^2+\omega_3^2$.  Since the point $z$ has trivial stabilizer under the action 
\eq{su2action2eq}, $A_3\cong\SU(2)$ and the induced metric on $\SU(2)$ is a Berger metric.

Furthermore, by Lemma \ref{sporbitslem} and the classification of homogeneous  links of complex, coassociative and special Lagrangian cones, we see that $A_3$ does not arise from the complex, Lagrangian or minimal Legendrian geometries.
\end{ex}

We now conclude with the following result.

\begin{prop}\label{su2action2prop}
Let $A$ be a connected, non-totally geodesic, associative 3-fold in $\mathcal{S}^7$ which is an orbit of the $\SU(2)$ action given in \eq{su2action2eq}.  Up to rigid motion, either $A=A_2$ 
  as given in Example \ref{su2ex1}, or $A=A_3$ as given in Example \ref{su2ex2}. 
\end{prop}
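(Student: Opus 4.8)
The plan is to assemble the proposition directly from Lemma \ref{lambdalem} together with Examples \ref{su2ex1} and \ref{su2ex2}, since essentially all of the genuine work has already been carried out in the classification of the metric parameters. The proof is therefore a bookkeeping argument that matches each orbit to one of the two named examples up to an ambient isometry.

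First I would invoke Lemma \ref{lambdalem}. Since $A$ is assumed connected, non-simple, and an orbit of the action \eqref{su2action2eq}, the lemma tells us that the unordered triple $(\lambda_1,\lambda_2,\lambda_3)$ defined by \eqref{lambdaeq1}--\eqref{lambdaeq3} is either $(3,3,9)$ or $(7,7,1)$, and moreover it pins down a representative base point through which the orbit passes in each case. This immediately reduces the proof to two cases, which I would treat in turn.

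In the case $(\lambda_1,\lambda_2,\lambda_3)=(3,3,9)$, Lemma \ref{lambdalem} says that $A$ is the orbit through a point of the form $(\cos\theta,0,0,\sin\theta)$ for some $\theta\in[0,\frac{\pi}{4}]$; that is, $A=A_2(\theta)$ in the notation of Example \ref{su2ex1}. I would then appeal to the observation recorded in Example \ref{su2ex1} that rotation in the $(z_1,z_4)$-plane commutes with the $\SU(2)$ action \eqref{su2action2eq}, so that this rotation is a rigid motion of $\mathcal{S}^7$ carrying $A_2(\theta)$ to $A_2(0)=A_2$. Hence $A=A_2$ up to rigid motion. In the case $(\lambda_1,\lambda_2,\lambda_3)=(7,7,1)$, Lemma \ref{lambdalem} says that $A$ is the orbit through $(0,\frac{1}{\sqrt{2}},\frac{i}{\sqrt{2}},0)$, which is exactly the orbit $A_3$ of Example \ref{su2ex2}, so $A=A_3$. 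Since these two cases exhaust the possibilities, the proposition follows.

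The step requiring the most care is the congruence of the family $A_2(\theta)$: one must check that the one-parameter deformation in $\theta$ is realised by an isometry of the ambient $\mathcal{S}^7$ rather than merely an abstract Riemannian isometry of $A$. This is precisely the commuting-rotation argument already established in Example \ref{su2ex1}, so no new estimate or computation is needed here, and the main obstacle has in effect been dispatched in advance. I would therefore keep the written proof short, citing Lemma \ref{lambdalem} for the dichotomy and the two examples for the identifications.
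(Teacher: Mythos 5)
Your proof is correct and follows exactly the route the paper intends: the proposition is stated without a separate proof precisely because it is the combination of Lemma \ref{lambdalem} (the dichotomy $(3,3,9)$ or $(7,7,1)$ and the identification of base points) with Examples \ref{su2ex1} and \ref{su2ex2}, including the observation that rotation in the $(z_1,z_4)$-plane commutes with the $\SU(2)$ action and hence carries each $A_2(\theta)$ to $A_2(0)$ by a rigid motion. Nothing is missing.
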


\noindent Proposition \ref{su2action2prop} 
gives an explicit example of an associative 3-fold in $\mathcal{S}^7$ 
which does not arise from other geometries and hence a new example of a Cayley cone.  

\begin{note}
Combining Propositions \ref{u1prop}, \ref{su2action1prop} and \ref{su2action2prop} leads to Theorem \ref{fullthm}.
\end{note}

\section[Ruled associative 3-folds]{Ruled associative 3-folds}\label{ruledsection}

In this section we review the material we require from \cite{Fox}, which provides the classification of associative 3-folds in
 $\mathcal{S}^7$ that are \emph{ruled} by oriented geodesic circles.  The first key idea will be the relationship 
between ruled associative 3-folds and surfaces in the space of oriented geodesic circles in $\mathcal{S}^7$ which are 
\emph{pseudoholomorphic curves} with respect to a certain almost complex structure.  The other major point is 
that there is a natural connection between these pseudoholomorphic curves and \emph{minimal surfaces} in $\mathcal{S}^6$.

We begin by defining our objects of interest.

\begin{dfn}\label{ruleddfn}
We say that a 3-dimensional submanifold $A$ of $\mathcal{S}^7$ is \emph{ruled} if there exists a smooth fibration $\pi:A
\rightarrow\Gamma$ for some 2-manifold $\Gamma$ whose fibres are oriented geodesic circles in $\mathcal{S}^7$.  
\end{dfn}

\begin{remark}
The associative 3-folds given by Propositions \ref{assocS6prop}(a) and \ref{holocurveprop} provide clear examples 
of ruled associative 3-folds.
\end{remark}

\begin{dfn}\label{Cdfn}
Let $\mathcal{C}$ denote the space of oriented geodesic circles in $\mathcal{S}^7$.  Notice that $\mathcal{C}$ is naturally
 isomorphic to the Grassmannian of oriented 2-planes in $\R^8$.

By \cite[Proposition 2.1]{Fox},  
$\mathcal{C}\cong\Spin(7)/\U(3)$ and the $\U(3)$ structure defines a $\Spin(7)$-invariant (non-integrable) almost 
complex structure $J_{\mathcal{C}}$ on $\mathcal{C}$.  
We call a surface $\Gamma$ in $\mathcal{C}$ a 
\emph{pseudoholomorphic curve} if $J_{\mathcal{C}}(T_p\Gamma)=T_p\Gamma$ for all $p\in\Gamma$.

We can also view $\mathcal{C}$ as the standard 6-quadric in $\C\P^7$ by sending an oriented basis $(\mathbf{a},\mathbf{b})$ for a
 geodesic circle in $\mathcal{S}^7$ to $[\frac{1}{2}(\mathbf{a}-i\mathbf{b})]\in\C\P^7$.  This identification induces the usual 
integrable complex structure $I_{\mathcal{C}}$ on $\mathcal{C}$. 
\end{dfn}

Let ${\bf c}:\Gamma\rightarrow\mathcal{C}$ be a surface in $\mathcal{C}$.  Then we may write
\begin{equation}\label{ceq1}
{\bf c}(p)=\big({\bf a}(p),{\bf b}(p)\big)
\end{equation}
where ${\bf a},{\bf b}:\Gamma\rightarrow\mathcal{S}^7$ are everywhere orthogonal, and so define an oriented basis for a 2-plane 
in $\R^8$ for each $p\in\Gamma$.  We may therefore associate to $\Gamma$ a map 
$\bfx:\Gamma\times[0,2\pi)\rightarrow\mathcal{S}^7$ given by
\begin{equation}\label{ceq2}
\bfx(p,q)={\bf a}(p)\cos q+{\bf b}(p)\sin q.
\end{equation}
Clearly, the image of $\bfx$ is a ruled 3-dimensional submanifold of $\mathcal{S}^7$.  
We can now characterise the associative condition for the image of $\bfx$ in terms of the geometry of $\mathcal{C}$ by 
\cite[Proposition 2.1]{Fox}.

\begin{prop}\label{ruledprop}  Use the notation of Definition \ref{Cdfn}.
\begin{itemize}
\item[\emph{(a)}] A pseudoholomorphic curve ${\bf c}:\Gamma\rightarrow\mathcal{C}$ as given by \eq{ceq1} defines an associative 3-fold in $\mathcal{S}^7$ via $\bfx:\Gamma\times[0,2\pi)\rightarrow\mathcal{S}^7$ as in \eq{ceq2}.
\item[\emph{(b)}] Let $A$ be a ruled associative 3-fold in $\mathcal{S}^7$ as in Definition \ref{ruleddfn}.  Then 
${\mathbf c}:\Gamma\rightarrow\mathcal{C}$ given by ${\bf c}(p)=\pi^{-1}(p)$ is a pseudoholomorphic curve.
\end{itemize}
\end{prop}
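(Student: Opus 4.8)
The plan is to reduce both directions to a single piece of linear algebra by passing to the cone. By Lemma~\ref{Cayleyconelem}, $A$ is associative if and only if the cone $CA = \R^+\times A\subseteq\R^8$ is Cayley, and under the cone correspondence the ruling of $A$ by geodesic circles becomes a ruling of $CA$ by the oriented $2$-planes $\Pi_p = \mathrm{span}_{\R}(\mathbf{a}(p),\mathbf{b}(p))$. Since $\Pi_p\cap\mathcal S^7 = \mathbf{c}(p)$, the map $\mathbf{c}\colon\Gamma\to\mathcal C\cong\widetilde{\Gr}_2(\R^8)$ is precisely the Gauss-type map $p\mapsto\Pi_p$. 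Thus everything hinges on describing when the tangent $4$-planes of $CA$ are Cayley.

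First I would compute those tangent planes. Differentiating \eq{ceq2}, the radial and circle directions at $r\bfx(p,q)$ together fill out $\Pi_p$, while the two $p$-derivatives of $\bfx$ supply the remaining directions. Writing $\bfx_q = \bfx(p,q)\in\Pi_p$ and using that the derivative of the plane is represented by $d\mathbf{c}(\xi)\in\mathrm{Hom}(\Pi_p,\Pi_p^\perp) = T_{\Pi_p}\mathcal C$, one checks that the $\Pi_p^\perp$-component of $\partial_\xi\bfx_q$ equals $d\mathbf{c}(\xi)(\bfx_q)$. Hence the normal $2$-plane is $W_{p,q} = \{\,d\mathbf{c}(\xi)(\bfx_q)\,:\,\xi\in T_p\Gamma\,\}$ and $T(CA) = \Pi_p\oplus W_{p,q}$.

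The crux is the algebraic fact that, for a fixed oriented $2$-plane $\Pi$, an orthogonal sum $\Pi\oplus W$ with $W\subseteq\Pi^\perp$ is Cayley exactly when $W$ is a complex line for the complex structure $J_\perp$ that the $\U(3)$-stabiliser of $\Pi$ induces on $\Pi^\perp\cong\C^3$, and that this $J_\perp$, acting by post-composition on $\mathrm{Hom}(\Pi,\Pi^\perp) = T_\Pi\mathcal C$, is exactly the $\Spin(7)$-invariant structure $J_{\mathcal C}$ of Definition~\ref{Cdfn}. I would prove this in a $\GG_2$-adapted frame $(\bfx,\bfe,\bff)$ with $\Pi = \langle\bfx,\bfe_3\rangle$, using the complex frame $\bfv_0 = \tfrac12(\bfx - i\bfe_3)$, $\bfv_1 = \tfrac12(\bfe_1 - i\bfe_2)$, $\bfv_2 = \tfrac12(\bff_6 - i\bff_5)$, $\bfv_3 = \tfrac12(\bff_7 - i\bff_4)$ of $\S\ref{struct}$, under which $J_\perp$ is multiplication by $i$ on $\langle\bfv_1,\bfv_2,\bfv_3\rangle$; the Cayley condition on $\Pi\oplus W$ then unwinds, via the vanishing of the components $\tau_j$ of Definition~\ref{taudfn}, to the $J_\perp$-invariance of $W$.

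Granting this lemma, both parts drop out. If $\mathbf{c}$ is pseudoholomorphic then $T_p\Gamma$ is $J_{\mathcal C}$-invariant, so we may choose $\xi_1,\xi_2$ spanning $T_p\Gamma$ with $d\mathbf{c}(\xi_2) = J_\perp\circ d\mathbf{c}(\xi_1)$; then $W_{p,q} = \langle\,d\mathbf{c}(\xi_1)(\bfx_q),\,J_\perp d\mathbf{c}(\xi_1)(\bfx_q)\,\rangle$ is a $J_\perp$-complex line for every $q$, so each tangent plane of $CA$ is Cayley and $A$ is associative. Conversely, if $A$ is associative then $W_{p,q}$ is $J_\perp$-invariant for all $q$; since $J_\perp$ acts on the $2$-plane $W_{p,q}$ as rotation through a right angle, this forces $d\mathbf{c}(\xi_2)(\bfx_q) = \pm J_\perp d\mathbf{c}(\xi_1)(\bfx_q)$ for all $q$, and as $\{\bfx_q:q\in[0,2\pi)\}$ spans $\Pi_p$ we conclude $d\mathbf{c}(\xi_2) = \pm J_\perp\circ d\mathbf{c}(\xi_1)$, i.e.\ $T_p\Gamma$ is $J_{\mathcal C}$-invariant and $\mathbf{c}$ is pseudoholomorphic. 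The main obstacle is the algebraic lemma of the third paragraph: correctly identifying the $\U(3)$-complex structure on $\Pi^\perp$ with $J_{\mathcal C}$ and verifying that $\Pi\oplus W$ is Cayley precisely for complex lines $W$, which demands careful bookkeeping between the real frame, its complexification, and the vanishing of the $\tau_j$.
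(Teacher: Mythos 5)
The paper does not prove this proposition at all: it is stated as an immediate consequence of \cite[Proposition 2.1]{Fox}, so your from-scratch argument is necessarily a different route. Your part (a) is essentially correct and is a clean way to see the result: passing to the cone via Lemma \ref{Cayleyconelem}, identifying $T(CA)=\Pi_p\oplus W_{p,q}$ with $W_{p,q}=\{d\mathbf{c}(\xi)(\bfx_q)\}$, and reducing to the linear-algebra fact that $\Pi\oplus W$ is Cayley iff $W$ is a $J_\perp$-complex line. That fact is easily checked in coordinates (for $\Pi=\langle e_0,e_1\rangle$ one finds $\Phi_0(e_0,e_1,w_1,w_2)=(\d x_{23}+\d x_{45}+\d x_{67})(w_1,w_2)$ on $\Pi^\perp$, and Wirtinger does the rest), and by $\Spin(7)$-transitivity on oriented $2$-planes it holds for every $\Pi$. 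The only caveat on this side is that the paper never pins down $J_{\mathcal C}$ beyond ``the invariant almost complex structure from the $\U(3)$ structure''; there are several such structures on $\mathrm{Hom}(\Pi,\Pi^\perp)$ (post-composition with $J_\perp$, pre-composition with $J_\Pi$, and their negatives), so your identification of $J_{\mathcal C}$ with post-composition by $J_\perp$ is really a choice of normalisation that must be checked against Fox's convention rather than derived from the paper.

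The genuine gap is in your converse. From ``$W_{p,q}$ is $J_\perp$-invariant for every $q$'' you assert $d\mathbf{c}(\xi_2)(\bfx_q)=\pm J_\perp d\mathbf{c}(\xi_1)(\bfx_q)$; this does not follow. Invariance of the $2$-plane $W_{p,q}=\mathrm{span}(\phi_1(\bfx_q),\phi_2(\bfx_q))$ only gives $\phi_2(\bfx_q)=\lambda(q)\phi_1(\bfx_q)+\mu(q)J_\perp\phi_1(\bfx_q)$ with $\mu(q)>0$ (the sign coming from the orientation in the Cayley condition), and the real content of the converse is showing that $\lambda$ and $\mu$ are independent of $q$, so that $J_\perp\circ\phi_1=\mu^{-1}(\phi_2-\lambda\phi_1)$ lies in $d\mathbf{c}(T_p\Gamma)$. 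When $\phi_1(\mathbf a),\phi_1(\mathbf b)$ are linearly independent over $\C$ this follows by comparing the expansions at $\bfx_q=\mathbf a$, $\mathbf b$ and $\mathbf a+\mathbf b$; but the degenerate case in which $\phi_1(\Pi_p)$ lies in a single $J_\perp$-complex line $\ell$ needs a separate argument (there $W_{p,q}=\ell$ for all $q$ and the complex-line condition alone imposes no constraint on $\phi_2$; one must use positivity of $\omega_\perp(\phi_1(\bfx_q),\phi_2(\bfx_q))$ for \emph{all} $q$ to force $\phi_2\in\mathrm{span}(\phi_1,J_\perp\circ\phi_1)$). As written, the step ``$=\pm J_\perp d\mathbf{c}(\xi_1)(\bfx_q)$'' would fail for a generic pair $\phi_1,\phi_2$ spanning a $J_\perp$-invariant plane, so this part needs to be redone along the lines above before the proof is complete.
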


 We conclude from Proposition \ref{ruledprop} 
that ruled associative 3-folds are in one-to-one correspondence with 
pseudoholomorphic curves in $\mathcal{C}$.  Hence, they depend locally on 10 functions of 1 variable.  

The next key observation is that we have a \emph{fibration} of $\mathcal{C}$ over the 6-sphere.

\begin{dfn}\label{fibrationdfn}
Let $\mathcal{C}$ be as in Definition \ref{Cdfn} and let ${\bf c}\in \mathcal{C}$.  Since $\mathcal{C}\cong\Spin(7)/\U(3)$, 
${\bf c}$ may be identified with a coset $\gg\U(3)$ for some $\gg\in\Spin(7)$, where $\U(3)$ is the stabilizer of ${\bf c}$ in 
$\Spin(7)$.  As there is a unique $\SU(4)\subseteq\Spin(7)$ containing the $\U(3)$ stabilizer, ${\bf c}$ defines a coset 
$\gg\SU(4)$ in $\Spin(7)/\SU(4)$.  However, $\SU(4)\cong\Spin(6)$, so $\Spin(7)/\SU(4)\cong\mathcal{S}^6$.  Thus, we have a 
$\C\P^3$ fibration $\tau:\mathcal{C}\rightarrow\mathcal{S}^6$.  Explicitly, if ${\bf c}$ has $({\bf a},{\bf b})$ as an 
oriented orthonormal basis, 
then $\tau({\bf c})=
{\bf a}\times {\bf b}$, where $\times$ is the cross product on $\O\cong\R^8$.
\end{dfn}

\noindent Definition \ref{fibrationdfn} describes a ``twistor fibration'' of $\mathcal{C}$ over $\mathcal{S}^6$ in the sense of 
\cite{Salamon2}.

The fibration $\tau:\mathcal{C}\rightarrow\mathcal{S}^6$ allows us to relate pseudoholomorphic curves in 
$\mathcal{C}$ to more familiar surfaces by \cite[Theorem 3.5]{Salamon2}. 

\begin{prop}\label{minprop}
Use the notation of Definitions \ref{Cdfn} and \ref{fibrationdfn}.  Let $\Gamma$ be a pseudoholomorphic curve in $\mathcal{C}$.
\begin{itemize}
\item[\emph{(a)}] If $\tau(\Gamma)=p\in\mathcal{S}^6$, then $\Gamma$ is a holomorphic curve in $\tau^{-1}(p)\cong\C\P^3$.
\item[\emph{(b)}] If $\Gamma$ does not lie in any fibre of $\tau$, then
 $\tau(\Gamma)$ is a  minimal surface in $\mathcal{S}^6$.
\end{itemize}
\end{prop}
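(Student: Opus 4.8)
The plan is to recognise $\tau:\mathcal{C}\rightarrow\mathcal{S}^6$ as a \emph{twistor fibration} in the sense of \cite{Salamon2} and then to deduce both parts from the general theory of pseudoholomorphic curves in twistor spaces. Concretely, the chain $\U(3)\subseteq\SU(4)\subseteq\Spin(7)$ used in Definition \ref{fibrationdfn} exhibits the fibre of $\tau$ as $\SU(4)/\U(3)\cong\C\P^3$, and $\mathcal{C}$ carries two $\Spin(7)$-invariant almost complex structures: the integrable $I_{\mathcal{C}}$ inherited from the quadric in $\C\P^7$ and the non-integrable $J_{\mathcal{C}}$ coming from the $\U(3)$ structure. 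These agree on the vertical distribution $\ker\d\tau$, where both restrict to the standard complex structure of the fibre $\C\P^3$, but differ in sign on the horizontal distribution. The key structural fact, which is \cite[Theorem 3.5]{Salamon2}, is that $J_{\mathcal{C}}$ is precisely the Eells--Salamon almost complex structure for $\tau$: on a horizontal vector it acts as the tautological complex structure $J_{\mathbf{c}}$ that the point $\mathbf{c}\in\tau^{-1}(p)$ determines on $T_p\mathcal{S}^6\cong\d\tau(H_{\mathbf{c}})$, with the sign convention that renders it non-integrable.

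For \emph{(a)}, suppose $\tau(\Gamma)=p$. Then $\Gamma$ lies entirely in $\tau^{-1}(p)\cong\C\P^3$, so $T\Gamma$ is vertical at every point. Since $J_{\mathcal{C}}$ restricts on the vertical distribution to the integrable complex structure of the fibre, the pseudoholomorphicity condition $J_{\mathcal{C}}(T\Gamma)=T\Gamma$ of Definition \ref{Cdfn} is exactly the statement that $\Gamma$ is a holomorphic curve in $\C\P^3$, which is the claim.

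For \emph{(b)}, assume $\Gamma$ does not lie in a single fibre, so $\psi=\tau\circ\Gamma:\Gamma\rightarrow\mathcal{S}^6$ is non-constant. Because $J_{\mathcal{C}}$ preserves the horizontal/vertical splitting, the $J_{\mathcal{C}}$-invariance of $T\Gamma$ forces its horizontal part to be $J_{\mathcal{C}}$-invariant, and there $\d\tau$ intertwines $J_{\mathcal{C}}$ with the tautological structure $J_{\mathbf{c}}$ on $T_{\psi}\mathcal{S}^6$; that is, $\d\psi\circ j_\Gamma=J_{\mathbf{c}}\circ\d\psi$. Since $J_{\mathbf{c}}$ is an orthogonal complex structure, this identity already makes $\psi$ weakly conformal, and it is exactly the hypothesis under which the Eells--Salamon theorem guarantees that $\psi$ is harmonic. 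A weakly conformal harmonic map of a Riemann surface is a (possibly branched) minimal immersion, so $\tau(\Gamma)$ is a minimal surface in $\mathcal{S}^6$.

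The substantive step, and the main obstacle, is the identification of $J_{\mathcal{C}}$ with the Eells--Salamon structure of the fibration: one must verify the horizontal compatibility $\d\tau\circ J_{\mathcal{C}}=J_{\mathbf{c}}\circ\d\tau$ together with the correct non-integrable sign. This is where the explicit geometry enters, since one has to track how the $\U(3)$-structure on $T_{\mathbf{c}}\mathcal{C}$ decomposes under $\U(3)\subseteq\SU(4)$ and match it with the cross-product formula $\tau(\mathbf{c})=\mathbf{a}\times\mathbf{b}$ of Definition \ref{fibrationdfn}. Once this is in hand, (a) and (b) follow immediately. As an alternative that avoids citing \cite{Salamon2} wholesale, one could argue entirely by moving frames using $\d\hh=\hh\psi$ from the Notes following Proposition \ref{structprop2}: pull $\psi$ back to $\Gamma$, write the pseudoholomorphic condition as the vanishing of the appropriate $(0,1)$-components of $\psi$, and compute the second fundamental form of $\tau(\Gamma)=\bfx\times\bfe_3$ directly from $\d\bfx$ and $\d\bfe_3$ to check that its mean curvature vanishes.
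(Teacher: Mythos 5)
Your approach is essentially the paper's: Proposition \ref{minprop} is stated there as a direct consequence of \cite[Theorem 3.5]{Salamon2} with no further argument, and your proposal simply unpacks that citation into the standard twistor-fibration reasoning (vertical restriction of $J_{\mathcal{C}}$ gives the fibre's complex structure for (a); the Eells--Salamon correspondence gives conformal harmonicity, hence branched minimality, of the projection for (b)). One detail is backwards, though: you assert that $I_{\mathcal{C}}$ and $J_{\mathcal{C}}$ agree on the vertical distribution and differ in sign on the horizontal, whereas the paper (following \cite[Lemma 5.2 \& Corollary 5.3]{Fox}, and consistent with the usual Eells--Salamon convention of reversing the sign on the fibres) has them agreeing on the horizontal distribution $\mathcal{H}$ and differing on $\mathcal{V}$ --- a fact the paper relies on in the proof of Theorem \ref{minhorthm}. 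This slip does not break either part of your argument, since the fibre equipped with $J_{\mathcal{C}}|_{\mathcal{V}}$ is still biholomorphic to $\C\P^3$ and the harmonicity statement depends only on $J_{\mathcal{C}}$ being the non-integrable twistor structure, but since you correctly identify the sign verification as the substantive step, you should get it the right way around.
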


\begin{remark}
The minimal surface $\tau(\Gamma)$ will have a finite number of branch points.
\end{remark}
 
Minimal surfaces in $\mathcal{S}^6$ depend locally on 8 functions of 1 variable, whereas 
pseudoholomorphic curves in $\mathcal{C}$ depend on 10 functions of 1 variable locally.  Therefore, we would expect every minimal 
surface in $\mathcal{S}^6$ to be the projection of a pseudoholomorphic curve in $\mathcal{C}$.   Moreover, a general minimal surface should 
lift to a family of pseudoholomorphic curves in $\mathcal{C}$ parameterised by 2 functions of 1 variable locally.  
By \cite[Lemma 7.5]{Fox} one has even more: namely, 
one can construct a family of pseudoholomorphic curves in $\mathcal{C}$ from a minimal surface in $\mathcal{S}^6$ and 
\emph{holomorphic data}.  We now briefly describe the construction.

\begin{ex}\textbf{(Ruled examples)}\label{ruledex}
Let $\bfu:\Sigma\rightarrow\mathcal{S}^6$ be a minimal surface.  
The normal bundle $N\Sigma$ of $\Sigma$ in $\mathcal{S}^6$ has a
 spin structure, so let $\Spin(N\Sigma)$ be the principal spin bundle given by the spin structure.  The associated spinor bundle 
$\mathbb{S}=\Spin(N\Sigma)\times_{\Spin(4)}V$ over $\Sigma$ (where $V$ is the spin representation of $\Spin(4)$) decomposes into positive and 
negative spinor bundles, which we denote by $\mathbb{S}^+$ and $\mathbb{S}^-$.  Since $\Sigma$ has a conformal
 structure, and the positive and negative spinors are interchanged by changing the orientation on $\Sigma$, we may focus on the positive spinors and define the bundle $W^+=\mathbb{S}^+\otimes T^{(1,0)}\Sigma$ over $\Sigma$.  
 Let $\mathcal{X}(\Sigma)
 =\P(W^+)$, which is a holomorphic $\C\P^1$ bundle over $\Sigma$.    
 
 Recall the notation of Definitions \ref{Cdfn} and \ref{fibrationdfn}.
 It is observed in \cite[$\S$7]{Fox} that $\mathcal{X}(\Sigma)$ is contained in $\bfu^*(\mathcal{C})$.  Thus a surface $\Gamma$ in $\mathcal{X}(\Sigma)$ defines a lift of $\Sigma$ to $\mathcal{C}$; i.e.~a smooth map $c_\Gamma:\Sigma\rightarrow\mathcal{C}$ such that $\tau\circ c_{\Gamma}=\bfu$.  By \cite[Lemma 7.5]{Fox}, 
$\Gamma$ is a holomorphic curve in $\mathcal{X}(\Sigma)$ if and only if the lift $c_\Gamma:\Sigma\rightarrow\mathcal{C}$ is a pseudoholomorphic curve.  
This pseudoholomorphic curve defines a ruled associative 3-fold 
in $\mathcal{S}^7$ by Proposition \ref{ruledprop}(a), which we denote by $A(\Sigma,\bfu,\Gamma)$.
\end{ex}

Thus, given a minimal surface $\Sigma$ in $\mathcal{S}^6$, we have a family of ruled associative submanifolds of $\mathcal{S}^7$ locally parameterised by a 
holomorphic function from $\Sigma$ to $\C\P^1$, as we expected from our earlier parameter count.  

By \cite[Theorem 7.8]{Fox}, we may classify the ruled associative 3-folds in $\mathcal{S}^7$.

\begin{thm}\label{ruledthm}
Let $A$ be a ruled associative 3-fold in $\mathcal{S}^7$ as in Definition \ref{ruleddfn}.  Then either
\begin{itemize}
\item[\emph{(a)}] $A$ is given by Proposition \ref{holocurveprop}, or
\item[\emph{(b)}]  $A=A(\Sigma,\bfu,\Gamma)$ 
for some minimal surface $\bfu:\Sigma\rightarrow\mathcal{S}^6$ and holomorphic curve $\Gamma$ in $\mathcal{X}(\Sigma)$ 
as in Example \ref{ruledex}.
\end{itemize}
\end{thm}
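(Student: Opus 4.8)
The plan is to read off the classification from the twistor fibration $\tau:\mathcal{C}\to\mathcal{S}^6$ of Definition \ref{fibrationdfn}, applied to the pseudoholomorphic curve that encodes $A$. First I would invoke Proposition \ref{ruledprop}(b): since $A$ is ruled by oriented geodesic circles, the assignment $\mathbf{c}(p)=\pi^{-1}(p)$ defines a pseudoholomorphic curve $\mathbf{c}:\Gamma\to\mathcal{C}$, and by Proposition \ref{ruledprop}(a) the original $A$ is recovered from $\mathbf{c}$ through the formula \eq{ceq2}; thus it suffices to classify pseudoholomorphic curves in $\mathcal{C}$. The dichotomy is whether $\mathbf{c}(\Gamma)$ lies in a single fibre of $\tau$. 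Since $A$ is real analytic by Corollary \ref{minrealanalcor} and $\Gamma$ is connected, the composite $\tau\circ\mathbf{c}$ is either constant or non-constant with the curve lying in no fibre, so exactly one of Proposition \ref{minprop}(a) or (b) applies.

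In Case (a), if $\tau\circ\mathbf{c}\equiv p\in\mathcal{S}^6$, then Proposition \ref{minprop}(a) gives that $\mathbf{c}(\Gamma)$ is a holomorphic curve in the fibre $\tau^{-1}(p)\cong\C\P^3$. I would then identify this twistor fibre with the base of the Hopf fibration $\mathcal{S}^7\to\C\P^3$: the condition $\mathbf{a}\times\mathbf{b}=p$ forces every ruling $2$-plane to be a complex line for the complex structure on $\R^8\cong\C^4$ determined by $p$, so the geodesic circles are precisely the Hopf circles and the ruled $3$-fold assembled via \eq{ceq2} is the Hopf lift of the holomorphic curve $\mathbf{c}(\Gamma)$, i.e.\ exactly the output of Proposition \ref{holocurveprop}.

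In Case (b), if $\mathbf{c}(\Gamma)$ lies in no fibre, Proposition \ref{minprop}(b) shows that $\Sigma:=\tau(\mathbf{c}(\Gamma))$ is a minimal surface in $\mathcal{S}^6$, immersed by some $\bfu$. The curve $\mathbf{c}$ is then a lift $c_\Gamma:\Sigma\to\mathcal{C}$ with $\tau\circ c_\Gamma=\bfu$, which, following the discussion in Example \ref{ruledex} and \cite[$\S$7]{Fox}, takes values in the sub-bundle $\mathcal{X}(\Sigma)\subseteq\bfu^*(\mathcal{C})$. Invoking \cite[Lemma 7.5]{Fox}, the pseudoholomorphicity of $c_\Gamma$ is equivalent to $\Gamma$ being a holomorphic curve in $\mathcal{X}(\Sigma)$; hence $\mathbf{c}$ determines such a $\Gamma$, and since $A$ and $A(\Sigma,\bfu,\Gamma)$ arise from the same pseudoholomorphic curve, the one-to-one correspondence of Proposition \ref{ruledprop} yields $A=A(\Sigma,\bfu,\Gamma)$.

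The main obstacle is the geometric identification in Case (a): verifying that a holomorphic curve in the twistor fibre $\tau^{-1}(p)$ yields exactly a Hopf lift. This requires matching two a priori different copies of $\C\P^3$ — the twistor fibre of $\tau$ and the base of the Hopf fibration — and checking that the circles produced by \eq{ceq2} really are the Hopf circles for the $p$-induced complex structure on $\R^8$. The remaining subtlety in Case (b), namely that $c_\Gamma$ lands in $\mathcal{X}(\Sigma)$ rather than ranging over the whole $\C\P^3$-fibre, is precisely what \cite[Lemma 7.5]{Fox} encapsulates, so I would lean on that result rather than reprove it.
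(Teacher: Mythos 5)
Your proposal is correct and follows essentially the same route as the paper, which assembles exactly these ingredients (Proposition \ref{ruledprop} for the bijection with pseudoholomorphic curves in $\mathcal{C}$, the twistor fibration dichotomy of Proposition \ref{minprop}, and Example \ref{ruledex}) and then defers the two identifications you flag — the fibre case as Hopf lifts, and the fact that a non-vertical pseudoholomorphic lift lands in $\mathcal{X}(\Sigma)$ — to \cite[Lemma 7.5 \& Theorem 7.8]{Fox}. Your reconstruction of the argument is faithful to that citation and contains no gap beyond what the paper itself outsources to Fox.
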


\begin{note}
Theorem \ref{ruledthm} is the natural extension of the main result in \cite{LotayLag} for ruled Lagrangian submanifolds of $\mathcal{S}^6$.
\end{note}

To conclude this section we make some further observations concerning ruled associative 3-folds in $\mathcal{S}^7$.

\begin{prop}\label{varpiprop}
 By Proposition \ref{ruledprop} there is an isomorphism $\imath$ between the set of ruled associative 3-folds in $\mathcal{S}^7$ 
and the set of pseudoholomorphic curves in $\mathcal{C}$, as given in Definition \ref{Cdfn}.  Let $\varpi=\tau\circ\imath$, 
where $\tau$ is given in Definition \ref{fibrationdfn}, and let $A$ be a ruled associative 3-fold in $\mathcal{S}^7$. 
\begin{itemize}
\item[\emph{(a)}] $\varpi(A)$ is a point if $A$ is given by Proposition \ref{holocurveprop}.
\item[\emph{(b)}] $\varpi(A)$ is a minimal surface in a totally geodesic $\mathcal{S}^5$ in $\mathcal{S}^6$ if 
$A$ is minimal Legendrian in $\mathcal{S}^7$.
\item[\emph{(c)}] $\varpi(A)$ is a pseudoholomorphic curve or a point in $\mathcal{S}^6$ if 
 $A$ is given by Proposition \ref{assocS6prop}.
\end{itemize}
\end{prop}

\begin{proof}
Recall the notation of Propositions \ref{structprop1}-\ref{structprop2}, giving $\bfx:A\rightarrow\mathcal{S}^7$, $\bfe$ and $\bff$
 which define a moving $\GG_2$ frame for $T\mathcal{S}^7|_A$, and matrices of 1-forms $\alpha$, $\beta$, $\gamma$ 
on the adapted frame bundle of $A$. 
Further, let $\bfe_3$ define the ruling direction.   
If $\times$ is the cross product on $\mathcal{S}^7$ 
given in Definition \ref{crossprodsdfn} we let
\begin{gather*}
\bfu=\bfx\times\bfe_3,\quad\bft_1=\bfx\times\bfe_1,\quad\bft_2=\bfx\times\bfe_2,\\
\bfn_1=\bfx\times\bff_6,\quad\bfn_2=\bfx\times\bff_5,\quad\bfb_1=\bfx\times\bff_7,\quad\bfb_2=\bfx\times\bff_4.
\end{gather*}
From Definition \ref{fibrationdfn}, we see that $\bfu$ defines the immersion of 
$\varpi(A)$ in $\mathcal{S}^6$.  Moreover, using \eq{poseq}-\eq{normeq}, the following structure equations
hold: 
\begin{align}
\d\bfu &=-\bft_1(\alpha_2-\omega_2)+\bft_2(\alpha_1-\omega_1)+\bfn_1\beta^6_3+\bfn_2\beta^5_3+\bfb_1\beta^7_3+\bfb_2\beta^4_3;
\label{surfeq1}\\
\d\bft_1 &= \bfu(\alpha_2-\omega_2)-\bft_2(\alpha_3-\omega_3)+\bfn_1\beta^6_1+\bfn_2\beta^5_1+\bfb_1\beta^7_1+\bfb_2\beta^4_1;
\label{surfeq2}\\
\d\bft_2 &= -\bfu(\alpha_1-\omega_1)+\bft_1(\alpha_3-\omega_3)+\bfn_1\beta^6_2+\bfn_2\beta^5_2+\bfb_1\beta^7_2+\bfb_2\beta^4_2;
\label{surfeq3}\displaybreak[0]\\
\d\bfn_1 &= -\bfu\beta^6_3-\bft_1\beta^6_1-\bft_2\beta^6_2+\textstyle\frac{1}{2}\bfn_2(\alpha_3+\omega_3+\gamma_3)\nonumber\\
&\qquad\qquad+\textstyle\frac{1}{2}\bfb_1(\alpha_1+\omega_1-\gamma_1)-\textstyle\frac{1}{2}\bfb_2(\alpha_2+\omega_2+\gamma_2); 
\label{surfeq4}\\
\d\bfn_2 &= -\bfu\beta^5_3-\bft_1\beta^5_1-\bft_2\beta^5_2-\textstyle\frac{1}{2}\bfn_1(\alpha_3+\omega_3+\gamma_3)\nonumber\\
&\qquad\qquad-\textstyle\frac{1}{2}\bfb_1(\alpha_2+\omega_2-\gamma_2)-\textstyle\frac{1}{2}\bfb_2(\alpha_1+\omega_1+\gamma_1);
\label{surfeq5}\displaybreak[0]\\
\d\bfb_1 &= -\bfu\beta^7_3 -\bft_1\beta^7_1-\bft_2\beta^7_2-\textstyle\frac{1}{2}\bfn_1(\alpha_1+\omega_1-\gamma_1)
+\textstyle\frac{1}{2}\bfn_2(\alpha_2+\omega_2-\gamma_2)\nonumber\\
&\qquad\qquad+\textstyle\frac{1}{2}\bfb_2(\alpha_3+\omega_3-\gamma_3);\label{surfeq6}\displaybreak[0]\\
\d\bfb_2 &= -\bfu\beta^4_3-\bft_1\beta^4_1-\bft_2\beta^4_2+\textstyle\frac{1}{2}\bfn_1(\alpha_2+\omega_2+\gamma_2)+\textstyle\frac{1}{2}\bfn_2(\alpha_1+\omega_1+\gamma_1) \nonumber\\
&\qquad\qquad-\textstyle\frac{1}{2}\bfb_1(\alpha_3+\omega_3-\gamma_3).\label{surfeq7}
\end{align}

We immediately deduce from \eq{surfeq1}-\eq{surfeq7} that the image of $\bfu$ is a point in $\mathcal{S}^6$ 
if and only if $\alpha_1=\omega_1$, $\alpha_2=\omega_2$ 
 and $\beta^4_3=\beta^5_3=\beta^6_3=\beta^7_3=0$.  From Example \ref{holocurveex} we see that these conditions are satisfied by the Hopf lift of 
 a holomorphic curve in $\C\P^3$, from which (a) follows immediately.   

Hence,  \eq{surfeq1}-\eq{surfeq7} are the structure equations for a 
 surface in $\mathcal{S}^6$, where $\bft_1,\bft_2$ are orthogonal unit tangent vectors and $\bfn_1,\bfn_2,\bfb_1,\bfb_2$ are 
orthogonal unit normal vectors.  
Furthermore, $\Omega_1=-\alpha_2+\omega_2$ and $\Omega_2=\alpha_1-\omega_1$ define an orthonormal coframe for $\varpi(A)$ 
and it is straightforward to calculate the second fundamental form of $\varpi(A)$ and verify that $\varpi(A)$ is minimal.  

We observe that $\d\bfb_1=0$, so $\varpi(A)$ is contained in a totally geodesic $\mathcal{S}^5$, if and only if
 $\beta^7_1=\beta^7_2=\beta^7_3=0$ and $\gamma=\alpha+\omega$.   Part (b) follows from the observations in Example \ref{minLegsex}.

For part (c), first notice that if we take a pseudoholomorphic
 curve $\Sigma$ in $\mathcal{S}^6$ and construct an associative 3-fold $A$ from it as in Proposition \ref{assocS6prop}(a), 
then $\varpi(A)\cong\Sigma$.  By Proposition \ref{assocS6prop}(b), an associative 3-fold $A$ lying in a totally geodesic
 $\mathcal{S}^6$ is Lagrangian, so it is ruled over a pseudoholomorphic curve in $\mathcal{S}^6$ or is the Hopf lift of 
a holomorphic curve in $\C\P^2$ by \cite[Theorem 7.5]{LotayLag}.  Thus $\varpi(A)$ is either a pseudoholomorphic curve or a point 
as claimed.
\end{proof}

\begin{remarks}
\begin{itemize}\item[]
\item[(a)] Though $A_1$ and $A_3$ given in Examples \ref{u1ex} and \ref{su2ex2} are fibered by circles over a surface, they are 
not ruled as the circles are not geodesics in $\mathcal{S}^7$.  
\item[(b)] The associative 3-folds $A_2(\theta)$ given in Example \ref{su2ex1} are ruled.  Moreover, $\varpi\big(A_2(0)\big)$ is 
a point and $\varpi\big(A_2(\frac{\pi}{4})\big)$ is the Bor\r{u}vka sphere $\mathcal{S}^2(\frac{1}{3})$ in a
 totally geodesic $\mathcal{S}^4$ in $\mathcal{S}^6$.
\end{itemize}
\end{remarks}

\section[Chen's equality]{Chen's equality}\label{chens}
 
In the study of submanifolds of manifolds with constant curvature, it has been fruitful to analyse those 
submanifolds satisfying the curvature condition known as \emph{Chen's equality}, particularly in the case when the submanifold 
is minimal.  In this section we classify the associative 3-folds in $\mathcal{S}^7$ satisfying Chen's equality.  

We begin by defining the curvature condition we wish to study.

\begin{dfn}\label{Chendfn}
Let $A$ be an associative 3-fold in $\mathcal{S}^7$.  Let $S_A$ be the scalar curvature of $A$ and let $K_A(p)$ be the minimum of the sectional curvatures of $A$ at $p$.  Since $A$ is minimal by 
Corollary \ref{minrealanalcor}, it follows from \cite[Lemma 3.2]{Chen} that
$$\delta_A=\textstyle\frac{1}{2}S_A-K_A\leq 2.$$ 
The curvature condition $\delta_A=2$, introduced in \cite{Chen}, is known as \emph{Chen's equality}.  
By \cite[Lemma 3.2]{Chen}, Chen's equality is equivalent to the existence of a non-zero tangent vector $\bfv$ on 
$A$ at each point such that $\II_A(\bfv,.)=0$, where $\II_A$ is the second fundamental form of $A$.
\end{dfn}

The Lagrangian submanifolds of $\mathcal{S}^6$ which satisfy Chen's equality are classified in \cite{Dillen2}.  
  These submanifolds are necessarily associative when embedded in $\mathcal{S}^7$ by Proposition \ref{assocS6prop}(b).  
 We now give some further examples of associative 3-folds in $\mathcal{S}^7$ which satisfy Chen's equality.

\begin{prop}\label{Chenprop2}
Associative 3-folds given by Propositions \ref{assocS6prop}(a) and 
\ref{holocurveprop} satisfy Chen's equality. 
\end{prop}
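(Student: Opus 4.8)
The plan is to use the geometric characterisation of Chen's equality recorded in Definition \ref{Chendfn}: for a minimal associative 3-fold, $\delta_A=2$ holds if and only if, at each point, there exists a non-zero tangent vector $\bfv$ with $\II_A(\bfv,\cdot)=0$. Thus the entire task reduces to exhibiting such a distinguished tangent direction for each of the two families, and this information has essentially already been extracted when the structure equations were adapted to these geometries.

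First I would recall the reductions of the structure equations computed in Examples \ref{prodsex} and \ref{holocurveex}. For an associative 3-fold $A$ arising from a pseudoholomorphic curve $\Sigma$ in $\mathcal{S}^6$ via Proposition \ref{assocS6prop}(a), the frame may be chosen so that $\beta^4_3=\beta^5_3=\beta^6_3=\beta^7_3=0$, where $\bfe_3$ is the product direction orthogonal to $\Sigma$. For the Hopf lift $A$ of a holomorphic curve in $\C\P^3$ given by Proposition \ref{holocurveprop}, the same vanishing $\beta^4_3=\beta^5_3=\beta^6_3=\beta^7_3=0$ holds in a suitable frame, with $\bfe_3$ now the direction of the circle fibres.

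Next I would translate these conditions on $\beta$ into the second fundamental form. By \eq{secondfunformeq} we have $\beta=h\omega$, so $\beta^a_3=h^a_{3k}\omega_k$; hence $\beta^a_3\equiv 0$ for all $a=4,5,6,7$ forces $h^a_{3k}=0$ for all $a$ and $k$. In the notation of Definition \ref{sffdfn} this is precisely the statement $\II_A(\bfe_3,\cdot)=0$, a fact already noted in both examples.

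Finally I would conclude by taking $\bfv=\bfe_3$: this is a non-zero tangent vector field along $A$ that annihilates the second fundamental form at every point, so Definition \ref{Chendfn} gives $\delta_A=2$, i.e.\ $A$ satisfies Chen's equality. I do not anticipate any genuine obstacle here, since the required vanishing was built into the adapted frames in Section \ref{struct}; the only point needing care is making the equivalence between $\beta^a_3\equiv0$ and $\II_A(\bfe_3,\cdot)=0$ explicit via \eq{secondfunformeq}, and observing that this matches the characterisation of Chen's equality rather than re-deriving any curvature identities.
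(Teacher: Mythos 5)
Your argument is correct and follows exactly the paper's route: the paper's proof likewise cites the observations in Examples \ref{prodsex} and \ref{holocurveex} (where $\II_A(\bfe_3,\cdot)=0$ is already noted) together with the characterisation of Chen's equality in Definition \ref{Chendfn}. You have merely made explicit the translation from $\beta^a_3\equiv 0$ to $h^a_{3k}=0$ via \eq{secondfunformeq}, which is a harmless and accurate elaboration.
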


\begin{proof}
  The result follows immediately from the observations in Examples \ref{prodsex} and \ref{holocurveex} and 
Definition \ref{Chendfn}.
\end{proof}

The examples of associative 3-folds in $\mathcal{S}^7$ satisfying Chen's equality given by Proposition \ref{Chenprop2} 
depend locally on 4 functions of 1 variable.  However, it is a straightforward calculation using exterior differential systems to 
see that associative 3-folds satisfying Chen's equality depend on 6 functions of 1 variable.  
We begin our analysis of these general examples with the following key result.  

\begin{lem}\label{ruledlem}
An associative 3-fold in $\mathcal{S}^7$ which satisfies Chen's equality is ruled.
\end{lem}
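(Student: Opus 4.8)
The plan is to exploit the reformulation of Chen's equality from Definition \ref{Chendfn}: at each point of $A$ there is a nonzero tangent vector $\bfv$ with $\II_A(\bfv,\cdot)=0$. I would first study the \emph{relative nullity} distribution $\Delta$, whose fibre at $p$ is $\{\bfv\in T_pA:\II_A(\bfv,\cdot)=0\}$. Chen's equality forces $\dim\Delta\ge 1$ everywhere, and since $A$ is minimal (Corollary \ref{minrealanalcor}) the trace-free condition $\sum_j\II_A(\bfe_j,\bfe_j)=0$ rules out $\dim\Delta=2$: if two orthonormal tangent vectors lay in $\Delta$, then the third would too. Hence $\dim\Delta\in\{1,3\}$ pointwise, with $\dim\Delta=3$ precisely at totally geodesic points. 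If $A$ is totally geodesic it is a great $\mathcal{S}^3$, trivially ruled by great circles; otherwise, by the real analyticity of $A$ (Corollary \ref{minrealanalcor}), $\{\II_A=0\}$ is a proper analytic subset, so $\dim\Delta=1$ on a dense open set, where $\Delta$ is a smooth (analytic) line field.

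On this open set I would adapt the moving frame of Proposition \ref{structprop1} so that $\bfe_3$ spans $\Delta$, completing $\{\bfe_1,\bfe_2,\bfe_3,\bff_4,\ldots,\bff_7\}$ to an oriented associative frame. By construction $\II_A(\bfe_3,\cdot)=0$, equivalently $\beta^a_3=0$ for $a=4,5,6,7$ by \eqref{secondfunformeq}. The heart of the matter is then to prove that the integral curves of $\bfe_3$ are geodesics of $A$, i.e.\ $\nabla^A_{\bfe_3}\bfe_3=0$; since $[\alpha]$ is the Levi--Civita connection of $A$, this is the condition $\alpha_1(\bfe_3)=\alpha_2(\bfe_3)=0$. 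I would extract this from the Codazzi equation \eqref{Codazzieq}: taking its $j=3$ column and using $\beta^a_3=0$ collapses it to $\beta^a_1\w\alpha_2-\beta^a_2\w\alpha_1=0$ for every $a$. Writing $\beta^a_j=h^a_{jk}\omega_k$ (so $\beta^a_1,\beta^a_2$ involve only $\omega_1,\omega_2$) and $\alpha_i=a_{ik}\omega_k$, and reading off the coefficients of $\omega_1\w\omega_3$ and $\omega_2\w\omega_3$ gives, for all $a$,
\begin{equation*}
a_{23}h^a_{11}-a_{13}h^a_{12}=0,\qquad a_{23}h^a_{12}-a_{13}h^a_{22}=0.
\end{equation*}
These say that $(a_{23},-a_{13})$ lies in the common kernel of the upper-left $2\times 2$ blocks of the shape operators. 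Were $(a_{13},a_{23})\neq 0$, this common null vector together with $\beta^a_3=0$ would produce a second, independent element of $\Delta$, contradicting $\dim\Delta=1$. Hence $\alpha_1(\bfe_3)=\alpha_2(\bfe_3)=0$.

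Finally I would convert this into the ruling. From $\d\bfx=\bfe\omega$ in \eqref{poseq} and from \eqref{tgteq}, using $\beta^a_3=0$ and the form of $[\alpha]$ in \eqref{sqbrkteq}, one obtains
\begin{equation*}
\d\bfe_3=-\bfx\,\omega_3-\bfe_1\,\alpha_2+\bfe_2\,\alpha_1.
\end{equation*}
Restricting to an integral curve of $\bfe_3$, where $\omega_1=\omega_2=0$ and $\omega_3=\d s$, and inserting $\alpha_1(\bfe_3)=\alpha_2(\bfe_3)=0$ gives $\tfrac{d\bfx}{ds}=\bfe_3$ and $\tfrac{d^2\bfx}{ds^2}=-\bfx$, which is exactly the equation of a unit-speed great circle in $\mathcal{S}^7\subseteq\R^8$. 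Thus every leaf of $\Delta$ is an oriented geodesic circle contained in $A$, and these circles foliate the dense open set $\{\II_A\neq 0\}$; by real analyticity this exhibits $A$ as ruled in the sense of Definition \ref{ruleddfn}. The main obstacle is the Codazzi step — isolating the correct components of \eqref{Codazzieq} and arguing that a nonzero $(a_{13},a_{23})$ would violate one-dimensionality of the nullity; the dimension count and the great-circle ODE are by comparison routine.
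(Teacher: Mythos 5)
Your proposal is correct and follows essentially the same route as the paper: Chen's equality supplies the nullity direction $\bfe_3$ (so $\beta^a_3=0$), the $j=3$ column of the Codazzi equation \eq{Codazzieq} forces $\alpha_1$ and $\alpha_2$ to be combinations of $\omega_1$ and $\omega_2$, and the first structure equations then show the integral curves of $\bfe_3$ are great circles. The only difference is cosmetic: you derive $\alpha_1(\bfe_3)=\alpha_2(\bfe_3)=0$ by a nullity-dimension argument and treat the totally geodesic locus explicitly, where the paper reads the same conclusion off the normal form of $\II_A$.
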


\begin{proof}  Let $\II_A$ be the second fundamental form of an associative 3-fold $A\subseteq\mathcal{S}^7$ satisfying Chen's 
equality and recall the notation of Propositions \ref{structprop1}-\ref{structprop2}.  
By Definition \ref{Chendfn} there exists a local unit tangent vector field $\bfe_3$ on $A$ such that $\II_A(\bfe_3,.)=0$.  
We thus have that $\II_A$ must locally be of the form:
\begin{align}
\II_A&=\bff_4\otimes(s(\omega_1^2-\omega_2^2)+2t\omega_1\omega_2)+\bff_5\otimes(v(\omega_1^2-\omega_2^2)+2u\omega_1\omega_2) 
\nonumber\\
&+\bff_6\otimes(u(\omega_1^2-\omega_2^2)-2v\omega_1\omega_2)+\bff_7\otimes(t(\omega_1^2-\omega_2^2)-2s\omega_1\omega_2).\label{Chensffeq}
\end{align}
From \eq{Codazzieq} we quickly deduce that the connection forms $\alpha_1$ and $\alpha_2$ on $A$ must be linear combinations of 
$\omega_1$ and $\omega_2$.  Thus, it follows from the first structure equations \eq{poseq} and \eq{tgteq} given in 
Proposition \ref{structprop1} that
$$\d\bfx=\bfe_3\omega_3,\quad\d\bfe_1=0,\quad\d\bfe_2=0\quad\text{and}\quad\d\bfe_3=-\bfx\omega_3\quad\text{modulo $\omega_1,\omega_2$}.$$
These equations define circles in $A$ which are geodesics in $\mathcal{S}^7$.  Thus, $A$ is ruled and $\bfe_3$ defines the 
direction of the ruling.
\end{proof}

Recall that the ruled associative 3-folds depend locally on 10 functions of 1 variable.  Thus, associative
3-folds satisfying Chen's equality form a distinguished subfamily of the ruled family.  By Theorem \ref{ruledthm}, to identify this
 subfamily we must understand Chen's equality as a condition on minimal surfaces in $\mathcal{S}^6$ and their pseudoholomorphic lifts
 to the space of oriented geodesic circles in  $\mathcal{S}^7$. 
 
We begin by identifying the distinguished minimal surfaces in $\mathcal{S}^6$ we seek. 

\begin{dfn}\label{ellipsedfn}
Let $\Sigma$ be a surface in $\mathcal{S}^6$ and let $\II_\Sigma$ be its second fundamental 
form.  The \emph{first ellipse of curvature} at $p\in\Sigma$ is given by
$$
\{\II_\Sigma(\bfv,\bfv)\,:\,\bfv\in T_p\Sigma,\,|\bfv|=1\}\subseteq N_p\Sigma.$$
This is an ellipse in the normal space $N_p\Sigma$ whenever it is non-degenerate.  

Following \cite{ONeill}, we say that $\Sigma$ is \emph{isotropic} if its first ellipse of curvature is a (possibly degenerate) 
circle at each point.    
\end{dfn}

\begin{remarks}
  If $\Sigma$ is a surface in a Riemannian manifold $M$ we may define the third fundamental form  $\III_\Sigma\in
C^{\infty}(S^3T^*\Sigma;N\Sigma)$ by setting $\III_\Sigma(X,Y,Z)$ to be the component of $\nabla_X\II_\Sigma(Y,Z)$ which is 
orthogonal to $T\Sigma$ and the image of $\II_\Sigma$, where $\nabla$ is the Levi-Civita connection on $M$.
The second ellipse of curvature at $p\in\Sigma$ is given by
$
\{\III_\Sigma(\bfv,\bfv)\,:\,\bfv\in T_p\Sigma,\,|\bfv|=1\}\subseteq N_p\Sigma$.  

A minimal surface in $\mathcal{S}^6$ whose first and second ellipses of curvature are all circles or points is called \emph{superminimal}.  We should point out that 
 ``isotropic'' is over-used 
and that a number of authors use it to mean ``superminimal''.
 \end{remarks}

\begin{prop}\label{Chenvarpiprop}
Let $A$ be an associative 3-fold in $\mathcal{S}^7$ which satisfies Chen's equality and use the notation of Proposition 
\ref{varpiprop} and Definition \ref{ellipsedfn}.  Then $\varpi(A)\subseteq\mathcal{S}^6$ is either a point or an isotropic 
 minimal surface. 
\end{prop}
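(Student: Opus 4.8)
The plan is to reduce the statement to a nullity condition on the complexified second fundamental form of $\Sigma:=\varpi(A)$ and then verify it using the shape of $\II_A$ forced by Chen's equality. By Lemma~\ref{ruledlem} $A$ is ruled, so $\varpi(A)$ is defined as in Definition~\ref{ruledprojdfn}. If $\varpi(A)$ is a point we are done, so assume it is not; then by Proposition~\ref{varpiprop}(b) (equivalently Theorem~\ref{ruledthm}(b)) $\Sigma$ is a minimal surface in $\mathcal{S}^6$. I would adopt the notation from the proof of Proposition~\ref{varpiprop}: the tangent frame $\bft_1,\bft_2$ and normal frame $\bfn_1,\bfn_2,\bfb_1,\bfb_2$ for $\Sigma$, the orthonormal coframe $\Omega_1=\omega_2-\alpha_2$, $\Omega_2=\alpha_1-\omega_1$, and the fact that under Chen's equality $\II_A$ takes the form \eqref{Chensffeq} with $r=w=0$, so that only $s,t,u,v$ survive.

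Next I would phrase isotropy in complex form. Since $\Sigma$ is minimal, $\II_\Sigma(\bft_1,\bft_1)+\II_\Sigma(\bft_2,\bft_2)=0$, so the first ellipse of curvature is centred at the origin and parametrised by $\theta\mapsto\cos 2\theta\,\II_\Sigma(\bft_1,\bft_1)+\sin 2\theta\,\II_\Sigma(\bft_1,\bft_2)$. This is a (possibly degenerate) circle exactly when $\II_\Sigma(\bft_1,\bft_1)$ and $\II_\Sigma(\bft_1,\bft_2)$ are orthogonal and of equal length; equivalently, writing $\mathbf{Z}=\II_\Sigma(\bft_1,\bft_1)-i\,\II_\Sigma(\bft_1,\bft_2)\in\C\otimes N\Sigma$, when $\langle\mathbf{Z},\mathbf{Z}\rangle_{\C}=0$ for the complex-bilinear extension of the metric. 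Thus it suffices to show that $\mathbf{Z}$ is null.

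The key device is the pair of complex null normals $\mathbf{N}=\bfn_1-i\bfn_2$ and $\mathbf{B}=\bfb_1-i\bfb_2$: since $\{\bfn_1,\bfn_2,\bfb_1,\bfb_2\}$ is orthonormal, $\mathrm{span}_{\C}\{\mathbf{N},\mathbf{B}\}$ is a totally null $2$-plane in $\C\otimes N\Sigma$. I would read off $\II_\Sigma$ from the normal parts of $\d\bft_1,\d\bft_2$ in \eqref{surfeq2}--\eqref{surfeq3}: the coefficients of $\bfn_1,\bfn_2$ are $\beta^6_1,\beta^5_1$ and those of $\bfb_1,\bfb_2$ are $\beta^7_1,\beta^4_1$. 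In the frame $\omega_1,\omega_2$ these are $(u,-v),(v,u)$ and $(t,-s),(s,t)$ respectively, so within each pair the two forms differ by the rotation $R$ through $\pi/2$. A short calculation then shows $\mathbf{Z}=(P-iQ)\mathbf{N}+(P'-iQ')\mathbf{B}\in\mathrm{span}_{\C}\{\mathbf{N},\mathbf{B}\}$, whence $\mathbf{Z}$ is null and $\Sigma$ is isotropic --- provided the change of coframe from $\{\omega_1,\omega_2\}$ to $\{\Omega_1,\Omega_2\}$ commutes with $R$, i.e.\ is conformal.

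The main obstacle is precisely this last point: establishing that the coframe change is conformal, equivalently that $\alpha_1+i\alpha_2$ is a multiple of $\omega_1+i\omega_2$ (the $\pi/2$-rotation relation between the normal coefficients is otherwise destroyed). Geometrically this is the assertion that $\bfu$ is a conformal immersion for the conformal structure $[\omega_1+i\omega_2]$ carried by the base pseudoholomorphic curve, which is built into Salamon's twistor correspondence of Proposition~\ref{minprop} identifying $\tau(\Gamma)=\Sigma$ as a conformal minimal surface; alternatively it can be extracted directly from the Codazzi equation \eqref{Codazzieq}, sharpening the argument of Lemma~\ref{ruledlem} that already places $\alpha_1,\alpha_2$ in $\mathrm{span}\{\omega_1,\omega_2\}$. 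Once conformality is in hand, the rotation relation survives the coframe change, $\mathbf{Z}$ lands in the null plane $\mathrm{span}_{\C}\{\mathbf{N},\mathbf{B}\}$, and the isotropy of $\Sigma$ follows.
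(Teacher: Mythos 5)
Your proposal is essentially the paper's proof: both reduce to reading off the second fundamental form of $\Sigma=\varpi(A)$ from the normal components of $\d\bft_1,\d\bft_2$ in \eq{surfeq2}--\eq{surfeq3} using the restricted form \eq{Chensffeq} of $\II_A$, and both conclude that the first ellipse of curvature is a (possibly degenerate) circle. Your packaging via the null vector $\mathbf{Z}$ and the totally null plane spanned by $\bfn_1-i\bfn_2$ and $\bfb_1-i\bfb_2$ is a clean equivalent of the paper's explicit formula $\II_\Sigma=\bfn_1\otimes(U(\Omega_1^2-\Omega_2^2)-2V\Omega_1\Omega_2)+\bfn_2\otimes(V(\Omega_1^2-\Omega_2^2)+2U\Omega_1\Omega_2)$; the paper additionally normalises the frame so that $s=t=v=0$, which you avoid.

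The step you flag but do not discharge --- conformality of the coframe change from $\{\omega_1,\omega_2\}$ to $\{\Omega_1,\Omega_2\}$ --- is genuinely needed (the paper uses it silently when asserting that $U,V$ are multiples of $u$ determined by $\alpha_1,\alpha_2$), and it does hold; the cleanest route is neither of the two you suggest but simply Cartan's lemma. Under Chen's equality $\beta^a_3=0$, so $\d\bfu=\bft_1\Omega_1+\bft_2\Omega_2$ exactly, and $\d^2\bfu=0$ forces $\beta^a_1\w\Omega_1+\beta^a_2\w\Omega_2=0$ for $a=4,5,6,7$ (equivalently, $\II_\Sigma$ is symmetric). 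Writing $\omega_i=c_{ij}\Omega_j$ and substituting $\beta^6_1=u\omega_1-v\omega_2$, $\beta^6_2=-v\omega_1-u\omega_2$, $\beta^5_1=v\omega_1+u\omega_2$, $\beta^5_2=u\omega_1-v\omega_2$, these conditions read $u(c_{12}+c_{21})=-v(c_{11}-c_{22})$ and $v(c_{12}+c_{21})=u(c_{11}-c_{22})$, whence $(u^2+v^2)(c_{12}+c_{21})=(u^2+v^2)(c_{11}-c_{22})=0$, and likewise with $(s,t)$ in place of $(u,v)$. So wherever $\II_A\neq0$ the matrix $(c_{ij})$ is a rotation--scaling and your $\pi/2$-rotation relation survives the coframe change; where $\II_A=0$ the ellipse of curvature is a point and isotropy is trivial. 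With that filled in, your argument is complete.
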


\begin{proof}
  Since $A$ satisfies Chen's equality, its second fundamental form can be written locally as in \eq{Chensffeq}.  
  However, we still have freedom to transform the orthonormal frame for $T\mathcal{S}^7|_A$ to set $s=t=v=0$.  Thus, locally,
\begin{equation}\label{Chensffeq2}
\II_A=2u\bff_5\otimes\omega_1\omega_2+u\bff_6\otimes(\omega_1^2-\omega_2^2).
\end{equation}
We need only consider the case where $\varpi(A)=\Sigma$ is a surface by Propositions \ref{varpiprop}(a) and \ref{Chenprop2}. Recall the proof of Proposition \ref{varpiprop}.  
From the Codazzi equation \eq{Codazzieq} and 
the structure equations \eq{surfeq2}-\eq{surfeq3} for $\Sigma$, 
we calculate the second fundamental form of $\Sigma$ to be given locally by:
\begin{equation*}
\II_\Sigma=\bfn_1\otimes\big(U(\Omega_1^2-\Omega_2^2)-2V\Omega_1\Omega_2\big)+\bfn_2\otimes\big(V(\Omega_1^2-\Omega_2^2)
+2U\Omega_1\Omega_2\big),
\end{equation*}
where $\Omega_1=-\alpha_2+\omega_2$, $\Omega_2=\alpha_1-\omega_1$ and $U,V$ are multiples of $u$ determined by $\alpha_1$ and 
$\alpha_2$.  Thus the first ellipse of curvature of $\Sigma$ at a point $p$ is given by:
$$ \big\{\cos q\big(U(p)\bfn_1(p)+V(p)\bfn_2(p)\big)+\sin q\big(U(p)\bfn_2(p)-V(p)\bfn_1(p)\big)\,:\,q\in[0,2\pi)\}.$$
Since this is clearly a circle or a point for each $p$ the result follows.
\end{proof}

\begin{remark}
The associative 3-fold $A_2$ in Example \ref{su2ex1} satisfies Chen's equality.  
\end{remark}

From Propositions \ref{minprop} and \ref{Chenvarpiprop} we see that the pseudoholomorphic curves in $\mathcal{C}$ corresponding to the 
associative 3-folds in $\mathcal{S}^7$ satisfying Chen's equality must either lie in a fibre of the fibration $\tau:\mathcal{C}
\rightarrow\mathcal{S}^6$ given in Definition \ref{fibrationdfn} or
project to an isotropic minimal surface under $\tau$.  
  In fact, we can describe these pseudoholomorphic curves using $\tau$, but first we must study the structure equations 
  for a pseudoholomorphic curve in $\mathcal{C}$.

Given a pseudoholomorphic curve $\mathbf{c}:\Gamma\rightarrow\mathcal{C}$, we can choose orthogonal
 $\mathbf{a},\mathbf{b}:\Gamma\rightarrow\mathcal{S}^7$ such that $\mathbf{c}(p)$ has $\big(\mathbf{a}(p),\mathbf{b}(p)\big)$ as an oriented orthonormal basis for each $p\in\Gamma$.   
 Define $\bfv_0=\frac{1}{2}(\mathbf{a}-i\mathbf{b})$ and let $\bfv=(\bfv_1\;\bfv_2\;\bfv_3)$ be a unitary frame for
 $T^{(1,0)}\mathcal{C}|_\Gamma$ adapted such that $\bfv_1$ spans $T^{(1,0)}\Gamma$, so that $\bfv_2,\bfv_3$ are normal to
 $\Gamma$ in $\mathcal{C}$.  
 Finally, we can form  
$\hh=(\bfv_0\;\bfv_1\;\bfv_2\;\bfv_3\;\bar{\bfv}_0\;\bar{\bfv}_1\;\bar{\bfv}_2\;\bar{\bfv}_3)$.

Since we can view $\Spin(7)$ as a $\U(3)$ bundle over $\mathcal{C}$, 
$\hh^{-1}\d\hh=\psi$ takes values in $\spin7$ as given in Proposition \ref{spin7prop} and may be written as in \eq{psieq},
using 1-forms $\mathfrak{h}$, $\theta$, $\kappa$ and $\rho$ on $\Spin(7)$ taking values in the appropriate spaces of matrices as 
described in Proposition \ref{spin7prop}.   
Thus, $\d\hh=\hh\psi$ and $\d\psi+\psi\w\psi=0$ are the structure equations for the adapted frame bundle over $\Gamma$.
Moreover, the complex-valued 1-forms given by $\mathfrak{h}$ and $\theta$ define the almost complex structure $J_{\mathcal{C}}$ on $\mathcal{C}$ 
by declaring a 1-form on $\mathcal{C}$ to be a $(1,0)$-form if its pullback to $\Spin(7)$ lies in the subbundle 
defined by the $\mathfrak{h}_i$ and $\theta_j$.  From these considerations we can immediately deduce the following result.

\begin{prop}\label{pholostructprop}
Recall Definition \ref{Cdfn} and let $\Gamma$ be a pseudoholomorphic curve in $\mathcal{C}$. 
Using the notation above, the first structure equations for 
the adapted frame bundle over $\Gamma$ are given by:   
\begin{align}
\d\bfv_0&=i\bfv_0\rho+\bfv\mathfrak{h}+\bar{\bfv}\bar{\theta};\label{pholostructeq1}\\
\d\bfv&=-\bfv_0\bar{\mathfrak{h}}^{\rm T}+\bfv\kappa-\bar{\bfv}_0\bar{\theta}^{\rm T}+\bar{\bfv}[\theta],\label{pholostructeq2}
\end{align}
where $[\,]$ is defined in \eq{sqbrkteq}.  
The second structure equations for the adapted frame bundle over $\Gamma$ are given by:
\begin{align}
\d\mathfrak{h}&=-(\kappa-i\rho\Id)\w\mathfrak{h};\label{pholostructeq3}\\
\d\theta&=-(\kappa+i\rho\Id)\w\theta;\label{pholostructeq4}\\
\d\kappa&=-\kappa\w\kappa+\mathfrak{h}\w\bar{\mathfrak{h}}^{\rm T}+\theta\w\bar{\theta}^{\rm T}-[\bar{\theta}]\w[\theta],
\label{pholostructeq5}
\end{align}
where $\Id$ is the $3\times 3$ identity matrix.
\end{prop}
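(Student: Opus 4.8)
The plan is to treat \eq{pholostructeq1}--\eq{pholostructeq5} as nothing more than the first and second structure equations of $\Spin(7)$ written in the adapted unitary coframe, in direct analogy with Propositions \ref{structprop1}--\ref{structprop2}: the first pair comes from expanding $\d\hh=\hh\psi$ column by column, and the second triple from reading off blocks of the Maurer--Cartan equation $\d\psi+\psi\w\psi=0$. The only geometric input, used at the very end, is that $\Gamma$ is a pseudoholomorphic curve with the frame adapted so that $\bfv_1$ spans $T^{(1,0)}\Gamma$.

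First I would derive \eq{pholostructeq1} and \eq{pholostructeq2}. Grouping the columns of $\hh$ into the blocks $\bfv_0$, $\bfv=(\bfv_1\;\bfv_2\;\bfv_3)$, $\bar\bfv_0$, $\bar\bfv$ and multiplying by $\psi$ as displayed in Proposition \ref{spin7prop}, the $\bfv_0$-column of $\d\hh$ is weighted by the first block-column $(i\rho,\mathfrak{h},0,\bar\theta)^{\rm T}$ of $\psi$, giving $\d\bfv_0=i\bfv_0\rho+\bfv\mathfrak{h}+\bar\bfv\bar\theta$; the three $\bfv$-columns are weighted by the second block-column $(-\bar{\mathfrak{h}}^{\rm T},\kappa,-\bar\theta^{\rm T},[\theta])^{\rm T}$, giving $\d\bfv=-\bfv_0\bar{\mathfrak{h}}^{\rm T}+\bfv\kappa-\bar\bfv_0\bar\theta^{\rm T}+\bar\bfv[\theta]$. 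This is pure matrix bookkeeping and holds on all of $\Spin(7)$.

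Next I would extract the three blocks of $-\psi\w\psi$ corresponding to $\mathfrak{h}$, $\theta$ and $\kappa$. The $\kappa$-block is immediate: the products along it give $\kappa\w\kappa-\mathfrak{h}\w\bar{\mathfrak{h}}^{\rm T}-\theta\w\bar\theta^{\rm T}+[\bar\theta]\w[\theta]$, and negating reproduces \eq{pholostructeq5} with no further work. The $\mathfrak{h}$- and $\theta$-blocks, however, come out as $\d\mathfrak{h}=-(\kappa-i\rho\Id)\w\mathfrak{h}-[\bar\theta]\w\bar\theta$ and $\d\theta=-(\kappa+i\rho\Id)\w\theta-[\bar\theta]\w\bar{\mathfrak{h}}$, each carrying one unwanted term. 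To kill these I would invoke the adaptation: since $\Gamma$ is a pseudoholomorphic curve of complex dimension one with $\bfv_1$ tangent and $\bfv_2,\bfv_3$ normal, the semibasic components transverse to $\Gamma$ must vanish, so that $\mathfrak{h}_2=\mathfrak{h}_3=\theta_2=\theta_3=0$ on the adapted frame bundle and only $\mathfrak{h}_1,\theta_1$ and their conjugates survive. Substituting $\bar\theta=(\bar\theta_1\;0\;0)^{\rm T}$ and $\bar{\mathfrak{h}}=(\bar{\mathfrak{h}}_1\;0\;0)^{\rm T}$ into the bracket \eq{sqbrkteq}, a one-line computation gives $[\bar\theta]\w\bar\theta=0$ and $[\bar\theta]\w\bar{\mathfrak{h}}=0$, leaving exactly \eq{pholostructeq3} and \eq{pholostructeq4}.

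The step I expect to be the genuine obstacle is this last vanishing, because it is the only place the hypothesis is really used and it is easy to get wrong. Note that $[\bar\theta]\w\bar\theta$ is \emph{not} a trivial ``$v\times v=0$'': the anticommutativity of $\w$ makes it equal to a nonzero multiple of $(\bar\theta_3\w\bar\theta_2,\ \bar\theta_1\w\bar\theta_3,\ \bar\theta_2\w\bar\theta_1)^{\rm T}$ on $\Spin(7)$, so its vanishing hinges precisely on $\theta$ having a single nonzero component along $\Gamma$. The same adaptation must, simultaneously, leave $\theta_1$ (and hence $\theta\w\bar\theta^{\rm T}$ and $[\bar\theta]\w[\theta]$) alive, since those terms genuinely appear in $\d\kappa$; so the care is in checking that the frame reduction annihilates $\theta_2,\theta_3$ and $\mathfrak{h}_2,\mathfrak{h}_3$ but nothing more.
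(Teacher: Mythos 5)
Your reduction of everything to $\d\hh=\hh\psi$ and $\d\psi+\psi\w\psi=0$ is exactly the paper's route (the paper presents the proposition as an immediate consequence of these), and your derivation of \eq{pholostructeq1}, \eq{pholostructeq2} and \eq{pholostructeq5}, together with your identification of the extra terms $-[\bar{\theta}]\w\bar{\theta}$ and $-[\bar{\theta}]\w\bar{\mathfrak{h}}$ in the $\mathfrak{h}$- and $\theta$-blocks of the Maurer--Cartan equation, is correct. The gap is in how you kill those extra terms. The adaptation does \emph{not} force $\mathfrak{h}_2=\mathfrak{h}_3=\theta_2=\theta_3=0$ on the adapted frame bundle over a general pseudoholomorphic curve $\Gamma$: by \eq{subseq1}--\eq{subseq2} these components encode second fundamental form data of the associated ruled associative 3-fold (e.g.\ $\mathfrak{h}_2=\frac{1}{2}(\beta^5_3-i\beta^6_3)$ when $\eta=0$) and are generically nonzero. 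Indeed, if your claim held, then $\lambda=\mathfrak{h}\times\theta$ would vanish on every pseudoholomorphic curve, so every such curve would be linear in the sense of Definition \ref{lineardfn}; this contradicts the paper's parameter counts (linear curves depend on 6 functions of 1 variable, general pseudoholomorphic curves on 10) and, via Proposition \ref{Chenlinearprop}, would force every ruled associative 3-fold to satisfy Chen's equality. Note that in the proof of Proposition \ref{Chenlinearprop} the paper only ever arranges $\mathfrak{h}_j+\theta_j=0$ for $j=2,3$, and obtains $\mathfrak{h}_j=\theta_j=0$ only under the additional hypothesis of linearity.

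The correct reason the unwanted terms vanish is a type argument, and it is the place where pseudoholomorphicity of $\Gamma$ actually enters: the forms $\mathfrak{h}_j$ and $\theta_j$ are semibasic for $\Spin(7)\rightarrow\mathcal{C}$ and span the $(1,0)$-forms for $J_{\mathcal{C}}$, so on the adapted frame bundle over the pseudoholomorphic curve $\Gamma$ they all pull back to multiples of a single $(1,0)$-form $\phi$ on $\Gamma$. Hence every component of $[\bar{\theta}]\w\bar{\theta}$ and of $[\bar{\theta}]\w\bar{\mathfrak{h}}$ is a multiple of $\bar{\phi}\w\bar{\phi}=0$, while the $(1,1)$-type terms $\mathfrak{h}\w\bar{\mathfrak{h}}^{\rm T}$, $\theta\w\bar{\theta}^{\rm T}$ and $[\bar{\theta}]\w[\theta]$ in \eq{pholostructeq5} survive, exactly as required. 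With this substitution for your vanishing argument, the rest of your proof goes through.
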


We see from \eq{pholostructeq3}-\eq{pholostructeq4} that the vectors of forms $\mathfrak{h}$ and $\theta$ push down to 
the pseudoholomorphic curve $\Gamma$.
It is observed in \cite[$\S$5]{Fox} that $\mathfrak{h}$ and $\theta$ are related to the fibration $\tau$ given 
in Definition \ref{fibrationdfn} in the following manner.

\begin{dfn}\label{horverdfn}
Use the notation of Definitions \ref{Cdfn} and \ref{fibrationdfn} and Proposition \ref{pholostructprop} and 
consider $\mathcal{C}$ with its almost complex structure $J_{\mathcal{C}}$.  
Let $\mathcal{H}$ and $\mathcal{V}$ be the subbundles of $T^{(1,0)}\mathcal{C}$ spanned by the vectors on which 
  $\mathfrak{h}$ and $\theta$ vanish respectively.  Then $T^{(1,0)}\mathcal{C}=\mathcal{H}\oplus\mathcal{V}$ and, 
 by \cite[Lemma 5.1]{Fox}, $\tau_*:\mathcal{H}\rightarrow T^{(1,0)}\mathcal{S}^6$ is an isomorphism, where we consider $\mathcal{S}^6$ with the almost complex structure described in Definition \ref{S6nearlyKdfn}.  Thus, 
 $\mathcal{H}$ and $\mathcal{V}$ can be viewed as horizontal and vertical distributions with respect to the fibration 
 $\tau:\mathcal{C}\rightarrow\mathcal{S}^6$.
 
A pseudoholomorphic curve $\Gamma$ in $\mathcal{C}$ is \emph{vertical} if $\theta|_\Gamma=0$.  
  By \cite[Lemma 5.1 \& Corollary 5.3]{Fox}, 
$\Gamma$ is vertical if and only if it lies in a single fibre of $\tau$, and may be viewed as a 
  holomorphic curve in $\C\P^3$.

A pseudoholomorphic curve $\Gamma$ in $\mathcal{C}$ is \emph{horizontal} if $\mathfrak{h}|_\Gamma=0$.  
  By \cite[Corollary 5.3]{Fox}, a horizontal pseudoholomorphic curve is a holomorphic 
curve with respect to the complex structure $I_{\mathcal{C}}$ on $\mathcal{C}$, and is algebraic in 
the 6-quadric in $\C\P^7$. 
\end{dfn}

We now define a new family of pseudoholomorphic curves in $\mathcal{C}$.

\begin{dfn}\label{lineardfn}
Recall the notation of Definition \ref{Cdfn} and Proposition \ref{pholostructprop}.  Let 
$\lambda=\mathfrak{h}\times\theta$; i.e.
\begin{equation*}
\lambda_1=\mathfrak{h}_2\circ\theta_3-\mathfrak{h}_3\circ\theta_2,\quad
\lambda_2=\mathfrak{h}_3\circ\theta_1-\mathfrak{h}_1\circ\theta_3,\quad
\lambda_3=\mathfrak{h}_1\circ\theta_2-\mathfrak{h}_2\circ\theta_1.
\end{equation*}
By \eq{pholostructeq3}-\eq{pholostructeq4}, $\d\lambda=-2\kappa\w\lambda$, so $\lambda$ pushes down to pseudoholomorphic curves in $\mathcal{C}$.
We say that a pseudoholomorphic curve $\Gamma$ in $\mathcal{C}$ is \emph{linear} if $\lambda|_\Gamma=0$.  
\end{dfn}

Clearly, horizontal and vertical pseudoholomorphic curves in $\mathcal{C}$ are linear.
Using exterior differential systems, one sees that the linear pseudoholomorphic curves depend locally on 6 functions of 1 variable,
whereas the horizontal and vertical curves depend locally on 4 functions of 1 variable.  

The utility of Definition \ref{lineardfn} becomes apparent in our next result.

\begin{prop}\label{Chenlinearprop}
Recall the notation of Definitions \ref{Cdfn} and \ref{lineardfn}.  
Associative 3-folds in $\mathcal{S}^7$ satisfying Chen's equality are in one-to-one correspondence with 
linear pseudoholomorphic curves in $\mathcal{C}$.
\end{prop}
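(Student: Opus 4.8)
The plan is to reduce the statement to the ruled setting, where both sides of the claimed correspondence have already been set up, and then to carry out a direct computation identifying the vanishing of the second fundamental form in the ruling direction with the vanishing of the vector $\lambda$ of Definition \ref{lineardfn}. By Lemma \ref{ruledlem} an associative 3-fold $A$ satisfying Chen's equality is ruled, and by Proposition \ref{ruledprop} the map $\imath$ is a bijection between ruled associative 3-folds and pseudoholomorphic curves in $\mathcal{C}$; moreover, by the proof of Lemma \ref{ruledlem} the ruling direction $\bfe_3$ is exactly the Chen direction, i.e.\ $\II_A(\bfe_3,\cdot)=0$. Thus it suffices to show that, under $\imath$, the condition $\II_A(\bfe_3,\cdot)=0$ on a ruled 3-fold is equivalent to $\lambda|_\Gamma=0$. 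For a ruled 3-fold the function $r$ of Proposition \ref{secondfunprop} vanishes, so $\II_A$ has the form \eq{sffruledeq}; since $w$ multiplies precisely the terms of \eq{sffruledeq} containing $\omega_3$, Chen's equality (Definition \ref{Chendfn}) is equivalent to the single scalar condition $w\equiv 0$.

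Next I would match the two adapted frames using the Notes following Proposition \ref{structprop2}: identifying the geodesic circle fibre with the oriented pair $(\bfx,\bfe_3)$ gives $\bfv_0=\tfrac12(\bfx-i\bfe_3)$, $\bfv_1=\tfrac12(\bfe_1-i\bfe_2)$, $\bfv_2=\tfrac12(\bff_6-i\bff_5)$ and $\bfv_3=\tfrac12(\bff_7-i\bff_4)$, so that $\mathfrak{h}$ and $\theta$ are recovered from $\omega,\alpha,\beta,\eta$ by inverting \eq{subseq1}--\eq{subseq2}. On $A$ we have $\eta=0$, and reading the components of \eq{sffruledeq} off the ruled normal form gives $\beta^4_3=\beta^7_3=0$, $\beta^5_3=2w\omega_1$ and $\beta^6_3=-2w\omega_2$.

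Substituting these into \eq{subseq1}--\eq{subseq2} yields $\mathfrak{h}_3=\theta_3=0$, together with $\mathfrak{h}_2=w(\omega_1+i\omega_2)$ and $\theta_2=-w(\omega_1+i\omega_2)$, while $\mathfrak{h}_1=\tfrac12(\omega_1+\alpha_1+i(\omega_2+\alpha_2))$ and $\theta_1=\tfrac12(\omega_1-\alpha_1+i(\omega_2-\alpha_2))$, where $\alpha_1,\alpha_2$ are combinations of $\omega_1,\omega_2$ by the proof of Lemma \ref{ruledlem}. Writing $\zeta=\omega_1+i\omega_2$, the relations $\mathfrak{h}_3=\theta_3=0$ force $\lambda_1=\lambda_2=0$ immediately, and since $\mathfrak{h}_1+\theta_1=\zeta$ the definition of $\lambda$ gives $\lambda_3=\mathfrak{h}_1\circ\theta_2-\mathfrak{h}_2\circ\theta_1=-w(\mathfrak{h}_1+\theta_1)\circ\zeta=-w\,\zeta^2$. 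As $\zeta$ is a nowhere-vanishing $(1,0)$-form on $\Gamma$, its symmetric square $\zeta^2$ is nonzero, so $\lambda|_\Gamma=0$ if and only if $w\equiv 0$.

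Combining the three paragraphs, a ruled associative 3-fold satisfies Chen's equality exactly when its associated pseudoholomorphic curve $\Gamma=\imath(A)$ is linear; the bijection of Proposition \ref{ruledprop} therefore restricts to a bijection between associative 3-folds satisfying Chen's equality and linear pseudoholomorphic curves in $\mathcal{C}$, which is the assertion. I expect the only genuine care to lie in the bookkeeping between the two frame conventions and the resulting translation of the $\beta^a_3$ into $\mathfrak{h},\theta$; once that dictionary is in place the computation of $\lambda$ is routine, and the factor $-w\,\zeta^2$ makes the equivalence transparent.
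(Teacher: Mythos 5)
Your argument is correct and follows essentially the same route as the paper: reduce to the ruled case via Lemma \ref{ruledlem} and Proposition \ref{ruledprop}, translate the second fundamental form into $\mathfrak{h}$ and $\theta$ through \eq{subseq1}--\eq{subseq2} with $\eta|_A=0$, and exploit the nonvanishing of $\mathfrak{h}_1+\theta_1=\omega_1+i\omega_2$. The only difference is cosmetic: you package both directions into the single identity $\lambda=-w\,\zeta^2$ in the ruled normal form, whereas the paper treats the two implications separately (the forward one by fully adapting frames to \eq{Chensffeq2}, the backward one via $\lambda_j=\pm\mathfrak{h}_j\circ(\mathfrak{h}_1+\theta_1)$), but the underlying computation is identical.
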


\begin{proof}
 Recall the notation of Propositions \ref{structprop1}-\ref{structprop2} and \ref{pholostructprop}.  
 
Let $A$ be an associative submanifold of $\mathcal{S}^7$ satisfying Chen's equality.  One can 
adapt frames over $A$ so that the second fundamental form $\II_A$ of $A$ has the local form as in \eq{Chensffeq2}.  
  Since $A$ is ruled by Lemma \ref{ruledlem}, $A$ defines a pseudoholomorphic curve $\Gamma$ in $\mathcal{C}$ as in 
Proposition \ref{ruledprop}(b).  
From \eq{subseq1}-\eq{subseq2} and \eq{Chensffeq2} we see that the vectors of 1-forms 
$\mathfrak{h}$ and $\theta$ appearing in the structure equations \eq{pholostructeq1}-\eq{pholostructeq5}
in Proposition \ref{pholostructprop} satisfy $\mathfrak{h}=(\mathfrak{h}_1,0,0)$ and $\theta=(\theta_1,0,0)$.  
Thus $\Gamma$ is linear.

Suppose now that $\Gamma$ is a linear pseudoholomorphic curve in $\mathcal{C}$ and let $A$ be the ruled associative 
3-fold constructed as in Proposition \ref{ruledprop}(a).  We see from \eq{subseq1}-\eq{subseq2}, since $\eta|_A=0$, that we have the freedom to adapt frames on $\Gamma$ such that
 $\mathfrak{h}_2+\theta_2=\mathfrak{h}_3+\theta_3=0$.  
Thus the condition $\lambda|_\Gamma=0$ implies that 
\begin{align*}
\mathfrak{h}_j\circ\theta_1-\theta_j\circ\mathfrak{h}_1=\mathfrak{h}_j\circ(\mathfrak{h}_1+\theta_1)=0
\end{align*}
on $\Gamma$ for $j=2,3$.  From \eq{subseq1}-\eq{subseq2} we see that $\mathfrak{h}_1+\theta_1=\omega_1+i\omega_2$ never vanishes, so 
$\mathfrak{h}_2=\theta_2=\mathfrak{h}_3=\theta_3=0$.  We deduce that $A$ satisfies Chen's equality from \eq{subseq1}-\eq{subseq2} and
 Definition \ref{Chendfn}.
\end{proof}

From Theorem \ref{ruledthm} and Propositions \ref{Chenvarpiprop} and \ref{Chenlinearprop} we have the following. 

\begin{cor}\label{Chenlinearcor}
Use the notation of Definitions \ref{Cdfn}, \ref{fibrationdfn}, \ref{ellipsedfn} and \ref{lineardfn}.
Every isotropic minimal surface $\bfu:\Sigma\rightarrow\mathcal{S}^6$ defines a linear pseudoholomorphic curve 
 $\mathbf{c}:\Sigma\rightarrow\mathcal{C}$ such that $\tau\circ\mathbf{c}=\bfu$.  Moreover, every non-vertical linear pseudoholomorphic 
 curve in $\mathcal{C}$ is the lift of an isotropic minimal surface in $\mathcal{S}^6$.
\end{cor}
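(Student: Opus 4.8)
The plan is to obtain both assertions by combining the one-to-one correspondence of Proposition \ref{Chenlinearprop} with the projection $\varpi=\tau\circ\imath$ of Definition \ref{ruledprojdfn} and the ellipse-of-curvature analysis of Proposition \ref{Chenvarpiprop}. The two statements are really the two directions of a single bijection $\Gamma\mapsto\tau(\Gamma)$ between linear pseudoholomorphic curves in $\mathcal{C}$ and isotropic minimal surfaces in $\mathcal{S}^6$, so I would prove each direction separately.

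For the second (`moreover') statement I would start with a linear pseudoholomorphic curve $\Gamma$ in $\mathcal{C}$ and let $A=\imath^{-1}(\Gamma)$ be the associative 3-fold it corresponds to under Proposition \ref{Chenlinearprop}; by that proposition $A$ satisfies Chen's equality. By Definition \ref{ruledprojdfn} we have $\tau(\Gamma)=\varpi(A)$, and Proposition \ref{Chenvarpiprop} tells us that $\varpi(A)$ is either a point or an isotropic minimal surface in $\mathcal{S}^6$. In the latter case $\tau\circ\Gamma=\varpi(A)$ holds by construction of $\varpi$, so $\Gamma$ is exactly the lift of the isotropic minimal surface $\bfu=\tau(\Gamma)$; in the former case $\Gamma$ is vertical, i.e.~a holomorphic curve in a single fibre $\C\P^3$ of $\tau$ (Definition \ref{horverdfn}), which is the degenerate case corresponding under Proposition \ref{holocurveprop} to a Hopf lift.

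For the first statement I would run this reasoning in reverse. Given an isotropic minimal surface $\bfu:\Sigma\rightarrow\mathcal{S}^6$, I would adapt a moving frame along $\Sigma$ so that its second fundamental form takes the isotropic normal form displayed in the proof of Proposition \ref{Chenvarpiprop}, which is precisely the condition that the first ellipse of curvature be a circle. Feeding this frame into the structure equations \eq{surfeq1}-\eq{surfeq7} reconstructs a ruled associative 3-fold $A$ over $\Sigma$, as in Theorem \ref{ruledthm}(b) and Example \ref{ruledex}, whose second fundamental form has the Chen form \eq{Chensffeq2}; hence $A$ satisfies Chen's equality, and by Proposition \ref{Chenlinearprop} its lift $\mathbf{c}=\imath(A)$ is a linear pseudoholomorphic curve with $\tau\circ\mathbf{c}=\varpi(A)=\bfu$.

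The main obstacle is this first direction: I must check that the isotropy of $\Sigma$ is exactly the integrability condition needed for the reconstructed lift to be \emph{linear} (equivalently, for the associated 3-fold to satisfy Chen's equality), translating the circle condition on the first ellipse into the vanishing $\lambda|_\Gamma=\mathfrak{h}\times\theta|_\Gamma=0$ through the substitutions \eq{subseq1}-\eq{subseq2}. A parameter count gives confidence that this closes up cleanly: linear pseudoholomorphic curves depend locally on 6 functions of 1 variable, matching the isotropic minimal surfaces, so the linear lift of a given isotropic minimal surface should be unique once it exists, and the only remaining care is to account for the degenerate point/vertical case noted above.
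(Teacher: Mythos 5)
Your proposal is correct and follows essentially the same route as the paper, which derives this corollary directly by combining Theorem \ref{ruledthm} with Propositions \ref{Chenvarpiprop} and \ref{Chenlinearprop} and offers no further explicit argument. Your handling of the degenerate vertical/point case and your identification of the isotropy condition as the content of the forward direction match the intended reading of those cited results.
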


\begin{note} Combining Theorem \ref{ruledthm}, Proposition \ref{Chenlinearprop} and Corollary \ref{Chenlinearcor} we deduce 
Theorem \ref{mainChenthm}.
\end{note}

The horizontal and vertical pseudoholomorphic curves in $\mathcal{C}$, as 
remarked in Definition \ref{horverdfn}, can be regarded as holomorphic curves and thus admit \emph{Weierstrass representations}; 
i.e.~they can be described using holomorphic data.  In contrast, isotropic minimal 
surfaces in $\mathcal{S}^6$ include the pseudoholomorphic curves in $\mathcal{S}^6$ which do \emph{not} admit a Weierstrass
 representation. 
Therefore linear pseudoholomorphic curves 
in $\mathcal{C}$, and thus associative submanifolds in $\mathcal{S}^7$ satisfying Chen's equality, cannot be described purely 
using holomorphic data.  

 We also observe that not every lift of an isotropic minimal  surface in $\mathcal{S}^6$ is necessarily linear since, naively, the pseudoholomorphic lifts to $\mathcal{C}$ of isotropic minimal surfaces depend on 8 functions of 1 variable locally, whereas the linear pseudoholomorphic curves only depend on 6 functions of 1 variable.

\medskip
 
We conclude this section with the following observation concerning \emph{austere} submanifolds in $\mathcal{S}^7$, which we 
 first define. 

\begin{dfn}
Let $A$ be a 3-dimensional submanifold of $\mathcal{S}^{n}$ and let $\II_A$ be its second fundamental form.  We  
call $A$ \emph{austere} if, for all unit vectors $\bff\in N_pA$, the quadratic form $\bff\cdot\II_A$ 
 on $T_pA$ has eigenvalues $\{0,\pm\mu\}$ for some $\mu$.  
\end{dfn}

By \cite[Theorem 3.17]{HarLaw}, an austere 3-fold $A\subseteq\mathcal{S}^{n}$ 
 has ``twisted normal cone'' 
$$\{(sp,t\bff(p))\in\R^{n+1}\oplus\R^{n+1}\,:\,\,s,t\in\R^+,\,p\in A,\,\bff(p)\in N_pA,\,|\bff(p)|=1\}$$
which is absolutely volume-minimizing.
Moreover, it follows from \cite[Theorem 3.1]{KarigiannisMinoo} that $A\subseteq\mathcal{S}^{n}$ is austere if and only if its conormal bundle in $T^*\mathcal{S}^{n}$ is special Lagrangian with respect to the Stenzel metric.
\begin{prop}
Associative 3-folds in $\mathcal{S}^7$ satisfying Chen's equality are austere.
\end{prop}

\begin{proof}
This is an elementary calculation given the local formula \eq{Chensffeq2} for the second fundamental form of an associative 3-fold satisfying 
Chen's equality.
\end{proof}

\begin{remarks}
In \cite[Proposition 6.18]{LotayLag} the author showed that a Lagrangian in $\mathcal{S}^6$ is 
austere if and only if it satisfies Chen's equality.  It is not yet clear whether or not the same result will hold 
 for austere associative 3-folds in $\mathcal{S}^7$.
\end{remarks}
 
\section[Isometric deformations]{Isometric deformations}\label{isom}

In this section we consider the following natural question: if there exists an immersion  
$\iota$ of a connected Riemannian 3-manifold $(A,g_A)$ in $(\mathcal{S}^7,g)$ as an associative 3-fold such that $\iota^*(g)=g_A$, is 
$\iota$ necessarily unique up to rigid motion?  

Necessary conditions for $\iota$ to exist are given by the Gauss, Codazzi and Ricci equations 
\eq{Gausseq}-\eq{Riccieq}. These conditions are sufficient in the case where $A$ is simply connected and, moreover, there exists a unique isometric immersion $\iota$ up to rigid motion with a given second fundamental form and normal connection.  We can see this through a now standard application of Cartan's lemma on maps into Lie groups as follows.  
 
The metric $g_A$ determines an $\SO(3)$-bundle $E$ over $A$. We can view the data given by the metric, second fundamental form and
 normal connection on $A$ locally as the matrices of 1-forms we denoted $\alpha$, $\beta$ and $\gamma$ defined on $E$.  
Using $\alpha$, $\beta$, $\gamma$ and a local orthonormal coframe given by $\omega$, we may thus define a form $\phi$ on $E$ by 
\eq{phispin7eq}, with $\eta=0$.  The form $\phi$ takes values in 
$\spin7$  
and the structure equations
 on $E$ are equivalent to the condition that $\d\phi+\phi\w\phi=0$.  Applying Cartan's result, for each $q\in E$ there exists an open
 set $V\ni q$ and an $\SO(3)$-equivariant immersion $f:V\rightarrow\Spin(7)$ so that $f^*(\Phi)=\phi$, where $\Phi$ is the
 Maurer--Cartan form on $\Spin(7)$.  Moreover, $f$ is unique up to $\Spin(7)$ transformations.  Projecting $f$ 
to $\mathcal{S}^7\cong\Spin(7)/\GG_2$ and using the $\SO(3)$-equivariance, we deduce that for each $p\in A$ there exists an open
 set $U$ and an immersion of $U$  in $\mathcal{S}^7$, with the given metric, second fundamental form and normal connection,
 which is then uniquely determined up to $\Spin(7)$ motions.  One may extend this local construction to a global one, i.e. we can 
take $V=E$ and $U=A$, in the case where $A$ is simply connected.

The equations \eq{Gausseq}-\eq{Riccieq} impose strong restrictions on $g_A$, the second fundamental form and normal connection, so we would 
expect that just asking for $\iota$ to be an isometric immersion (not specifying the second fundamental form or normal connection) would determine it up to rigid motion.    
However,  we find that if  the 
associative submanifold is \emph{ruled}, then $\iota$ need not be unique.  We therefore restrict our attention to this situation.

\medskip

Recall that ruled associative 3-folds in $\mathcal{S}^7$ are in one-to-one correspondence with pseudoholomorphic curves in 
the space $\mathcal{C}$ of oriented geodesic circles in $\mathcal{S}^7$ by Proposition \ref{ruledprop}.  We characterised the adapted frame bundle $E$ of a pseudoholomorphic curve $\Gamma$ in $\mathcal{C}$ in 
 Proposition \ref{pholostructprop} locally using vectors of  1-forms 
$\mathfrak{h}=(\mathfrak{h}_1,\mathfrak{h}_2,\mathfrak{h}_2)^{\rm T}$ and $\theta=(\theta_1,\theta_2,\theta_2)^{\rm T}$, 
a skew-hermitian matrix of 1-forms $\kappa=(\kappa_{jk})$ and a real 1-form $\rho=i\Tr\kappa$ satisfying the structure equations. We see from \eq{subseq1}-\eq{subseq9} that if $A$ is the ruled associative 3-fold in $\mathcal{S}^7$ corresponding to $\Gamma$, then we can relate $\mathfrak{h}$, $\theta$,  $\kappa$ and $\rho$ to the matrices of 1-forms $\omega$,  $\alpha$, $\beta$, $\gamma$ defined locally on the adapted frame bundle of $A$. 

We deduce that the forms $\mathfrak{h}_1$, $\theta_1$, $\rho$, $\kappa_{11}$, $\kappa_{22}$ and
 $\kappa_{33}$  are determined by $\alpha$ and $\omega$, and thus by the Levi-Civita connection of $A$.  In contrast, $\kappa_{32}$ depends only on $\gamma$, and thus is determined by the components of the normal connection of $A$  which are generically independent of the Levi-Civita connection.
 Finally, we observe that
$\mathfrak{h}_2$, $\theta_2$, $\mathfrak{h}_3$, $\theta_3$ $\kappa_{21}$ and $\kappa_{31}$ depend solely on $\beta$, and thus are determined by the second fundamental form of $A$.

The structure equations \eq{pholostructeq3}-\eq{pholostructeq5} on the frame bundle $E$ 
over a surface $\Gamma$ are necessary conditions for a
pseudoholomorphic immersion of $\Gamma$ in $\mathcal{C}$, 
satisfying \eq{pholostructeq1}-\eq{pholostructeq2}, to exist.  
Thus, these equations \eq{pholostructeq3}-\eq{pholostructeq5} may be interpreted as necessary conditions for a ruled associative immersion to exist 
satisfying the structure equations \eq{poseq}-\eq{normeq}.  Since $\mathcal{C}\cong\Spin(7)/\U(3)$ and we view $\Spin(7)$ as the
 $\U(3)$-frame bundle over $\mathcal{C}$, we can apply Cartan's lemma on maps into Lie groups as explained above to deduce that  
\eq{pholostructeq3}-\eq{pholostructeq5} become sufficient conditions for the existence of 
pseudoholomorphic immersions of open neighbourhoods of each point of $\Gamma$ into
 $\mathcal{C}$ satisfying \eq{pholostructeq1}-\eq{pholostructeq2}.  
  Moreover, $\mathfrak{h}$, $\theta$ and $\kappa$ determine these immersions uniquely up to $\Spin(7)$
 transformations on $\mathcal{C}$.  Again, as for the above discussion with associative 3-folds, these local constructions 
 can be extended to global ones if $\Gamma$ is connected and simply connected.

\medskip

Now suppose that $\Gamma$ is a connected pseudoholomorphic curve in $\mathcal{C}$ and $E$ is the frame bundle over $\Gamma$. 
 We can always adapt frames on $\Gamma$ so that $\mathfrak{h}_3=\theta_3=\kappa_{31}=0$ on $E$, since in choosing our unitary frame for
 $T^{(1,0)}\mathcal{C}|_\Gamma$ we still have the freedom to rotate the frame field normal to $T^{(1,0)}\Gamma$.  
(This adaptation simply makes our calculations easier and is not necessary for the construction we will describe.)
Observe that \eq{pholostructeq3}-\eq{pholostructeq5} are invariant under the 
transformation
\begin{align}
(\mathfrak{h}_2,\theta_2,\kappa_{21})&\mapsto e^{ia}
(\mathfrak{h}_2,\theta_2,\kappa_{21})\label{deformeq1}
\end{align}   
for a real constant $a$.  
Moreover, \eq{pholostructeq3}-\eq{pholostructeq5} are also invariant under
\begin{align}
\kappa_{32}&\mapsto e^{ib}\kappa_{32}\label{deformeq2}
\end{align}
for some real constant $b$.  

\begin{remarks}
 If the projection $\tau(\Gamma)$ in $\mathcal{S}^6$, given in Definition \ref{fibrationdfn}, is a surface then we may essentially identify $\kappa_{21}$ and $\kappa_{32}$ with the second and third fundamental forms of $\tau(\Gamma)$.  
We may therefore regard
 $\kappa_{21}$ and $\kappa_{32}$, in some sense, as the second and third fundamental forms of $\Gamma$.  
With this point of view, \eq{deformeq1}-\eq{deformeq2}  basically correspond to rotations of these fundamental forms.  The first deformation should be familiar from the theory of minimal surfaces in space forms, where one rotates the eigendirections of the second fundamental form along the 1-parameter family of isometric deformations. This can be seen geometrically 
when one transforms from the catenoid to the helicoid, for example.
\end{remarks}

Given a solution to \eq{pholostructeq1}-\eq{pholostructeq5} with $\mathfrak{h}_3=\theta_3=\kappa_{31}=0$ (i.e.~a pseudoholomorphic curve $\Gamma$ in $\mathcal{C}$ with adapted frame bundle $E$ determined by the local data $\mathfrak{h}$, $\theta$ and $\kappa$), 
we can construct solutions to \eq{pholostructeq3}-\eq{pholostructeq5} by 
\begin{gather}
\mathfrak{h}_{(a,b)}=(\mathfrak{h}_1,e^{ia}\mathfrak{h}_2,0),\qquad\theta_{(a,b)}=(\theta_1,e^{ia}\theta_2,0),\label{ldata1}\\[4pt]
\kappa_{(a,b)}=\left(\begin{array}{ccc} \kappa_{11} & -e^{-ia}\overline{\kappa}_{21} & 0\\e^{ia}\kappa_{21} & \kappa_{22} & -e^{-ib}\bar{\kappa}_{32}\\
0 & e^{ib}\kappa_{32} & \kappa_{33} \end{array}\right)\label{ldata2}
\end{gather}
for $a,b\in\R$.   
The parameter $a$ in \eq{deformeq1} gives a non-trivial family of solutions unless $\mathfrak{h}_2=\theta_2=
\kappa_{21}=0$, which corresponds to the ruled associative 3-fold $A$ being totally geodesic.  The parameter $b$ in \eq{deformeq2} 
will also give a non-trivial family of solutions for non-zero $\kappa_{32}$.  
 
 However, it may not be possible to vary the parameters $a$ and $b$ independently.  Since $\kappa_{32}$ is determined by the 1-forms
 $\gamma_1$ and $\gamma_2$ on the adapted frame bundle of $A$, $\kappa_{32}$ is generically independent from the forms given by
 $\omega$, $\alpha$ and $\beta$, and thus from $\mathfrak{h}$, $\theta$ and the other forms in $\kappa$.  Therefore, for generic
 choices of pseudoholomorphic curve in $\mathcal{C}$ (or, equivalently, ruled associative 3-fold in $\mathcal{S}^7$), we have a
 2-parameter family of solutions indexed by $(a,b)\in\R^2$ (or $(e^{ia},e^{ib})\in\mathcal{S}^1\times\mathcal{S}^1=T^2$).  However, it can certainly occur that $\kappa_{32}$ is determined by the
 other forms: for example, if $A$ is Lagrangian then $\kappa_{32}=\theta_1$, and if $A$ is minimal Legendrian then
 $\kappa_{32}=-\mathfrak{h}_1$.  In this situation we will only have at most a 1-parameter family of solutions.

By our earlier discussion, for each $p\in\Gamma$, the solutions to \eq{pholostructeq3}-\eq{pholostructeq5} we have constructed 
using \eq{ldata1}-\eq{ldata2} define a family $\iota_{(a,b)}$ of pseudoholomorphic immersions in $\mathcal{C}$ of an open set 
$U\ni p$ with local data given by $\mathfrak{h}_{(a,b)}$, $\theta_{(a,b)}$ and $\kappa_{(a,b)}$.
   From \eq{ldata1}-\eq{ldata2}, we see that the $\iota_{(a,b)}$ induce the same metric.  Since varying the parameters rotates the 
eigendirections of the second and third fundamental forms, the immersions $\iota_{(a,b)}$ will be non-congruent 
 for $a,b\in[0,2\pi)$ as long as $\kappa_{21}$ and $\kappa_{32}$ are non-constant, since this exceptional case is exactly when 
all directions are eigendirections. 
We conclude that for all $p\in A$ there exist a family $\iota_{(a,b)}$ of isometric associative immersions of 
an open set $U\ni p$, which will be non-congruent in generic situations (again since we are transforming the second 
fundamental form of $A$).

We deduce the following result.  
\begin{thm}\label{localprop}
A generic ruled associative 3-fold in $\mathcal{S}^7$ has a $T^2$-family of local non-congruent 
isometric deformations.  
\end{thm}

\medskip

The big problem in extending the local deformations in Theorem \ref{localprop} to global isometric deformations 
is that the parameters $a,b$ in \eq{deformeq1}-\eq{deformeq2} are only guaranteed to be globally defined on the 
simply connected cover of a connected surface $\Gamma$, as we saw in our earlier discussion.  
The question of when the parameters are well-defined on surfaces of positive genus is very difficult to answer 
in general : when $\Gamma\cong T^2$ this is known as the ``period problem''.  However, we can of course overcome 
this difficulty by considering $\Gamma\cong\mathcal{S}^2$.  In this case, as we have already seen, the structure equations 
\eq{pholostructeq3}-\eq{pholostructeq5} become necessary and sufficient conditions for the existence of a pseudoholomorphic 
immersion of $\Gamma$, and thus of a ruled associative immersion of $(A,g_A)$.  We therefore potentially have a $T^2$-family
 of isometric ruled associative immersions of $(A,g_A)$ in $\mathcal{S}^7$.  

Using the well-developed theory of minimal (or \emph{harmonic}) 2-spheres in $\mathcal{S}^6$ we have the following theorem.

\begin{thm}\label{minhorthm}
Every non-totally geodesic minimal $\mathcal{S}^2$ in $\mathcal{S}^6$ has a horizontal pseudoholomorphic lift to $\mathcal{C}$, in the
 sense of Definition \ref{horverdfn}.
\end{thm}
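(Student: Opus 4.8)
The plan is to exploit the fact that $\mathcal{S}^2$ carries no nonzero holomorphic differential of positive degree, which forces a minimal immersion into $\mathcal{S}^6$ to be maximally isotropic, and then to recognise this isotropy as exactly the horizontality condition of Definition \ref{horverdfn}.

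First I would fix a local complex coordinate $z$ on the minimal $\mathcal{S}^2$ and form the successive complex derivatives $\partial\bfu,\partial^2\bfu,\partial^3\bfu$ of the immersion $\bfu$ in $\C\otimes\R^7$; these span the complexified tangent space and the first and second normal (osculating) spaces, whose real geometry is encoded in the first and second ellipses of curvature of Definition \ref{ellipsedfn}. The minimal surface equations show that the complex-bilinear pairings of these derivatives, which measure the eccentricities of the two ellipses of curvature and are the higher analogues of the Hopf differential, transform as holomorphic differentials of positive weight. Since $\mathcal{S}^2\cong\C\P^1$ admits no such differential, every one of them vanishes, so both ellipses of curvature are circles: $\bfu$ is superminimal. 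Being non-totally-geodesic, $\bfu$ is then linearly full in some totally geodesic even-dimensional subsphere, as in the classical Calabi rigidity theorem for minimal two-spheres.

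Next I would promote this to a lift. Recall from Definition \ref{Cdfn} that a point of $\mathcal{C}$ is the null line $[\tfrac{1}{2}(\mathbf{a}-i\mathbf{b})]\subseteq\C\P^7$ spanned by an oriented orthonormal pair, and from Definition \ref{fibrationdfn} that $\tau(\mathbf{c})=\mathbf{a}\times\mathbf{b}$. Using the superminimal osculating frame I would define $\mathbf{c}:\mathcal{S}^2\to\mathcal{C}$ by sending each point to the oriented $2$-plane picked out by the osculating data of $\bfu$, arranged so that $\tau\circ\mathbf{c}=\bfu$. That $\mathbf{c}$ is pseudoholomorphic follows from the harmonicity of $\bfu$ exactly as in Proposition \ref{minprop}. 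To see that $\mathbf{c}$ is horizontal I would match this frame to the $\U(3)$-frame on $\mathcal{C}$ through the substitutions \eq{subseq1}-\eq{subseq9} and the projection equations \eq{surfeq1}-\eq{surfeq7}, and read off the forms $\mathfrak{h}_1,\mathfrak{h}_2,\mathfrak{h}_3$ of Proposition \ref{pholostructprop}: the vanishing of the higher Hopf differentials established above is precisely what kills $\mathfrak{h}|_{\mathbf{c}}$. By Definition \ref{horverdfn} a horizontal pseudoholomorphic curve is automatically $I_{\mathcal{C}}$-holomorphic and algebraic in the quadric, so $\mathbf{c}$ is the desired lift.

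The main obstacle is this last translation step: expressing the coordinate-free superminimality of $\bfu$ as the pointwise vanishing of the specific components $\mathfrak{h}_j$ in the frame adapted to $\mathcal{C}$, and checking that the osculating assignment $\mathbf{c}$ is well-defined and smooth over all of $\mathcal{S}^2$ rather than only on a coordinate chart. The genus-zero hypothesis is what removes any global monodromy obstruction to this construction, so that the local horizontal lift closes up to a genuine pseudoholomorphic $\mathcal{S}^2$ in $\mathcal{C}$; the non-totally-geodesic hypothesis is needed to ensure the osculating spaces are nondegenerate and that the lift is a genuine curve rather than a single point.
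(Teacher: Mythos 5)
Your overall strategy coincides with the paper's: both arguments rest on the fact that a minimal $\mathcal{S}^2$ in $\mathcal{S}^6$ is superminimal (isotropic), and both then produce the horizontal lift from the osculating data of $\bfu$. Where you differ is in execution. The paper does not reprove superminimality via the vanishing of holomorphic differentials on $\C\P^1$ (it cites Calabi for this), and, more importantly, it does not attempt the direct frame computation you propose. Instead it passes through Chern's \emph{directrix curve}: the top osculating data of $\bfu$ is packaged as a totally isotropic holomorphic curve $\Xi$ in $\C\P^{2m}$, which determines a holomorphic curve in the space $H_m\cong\SO(2m+1)/\U(m)$ of maximal isotropic subspaces; Barbosa's theorem says this curve is horizontal for the twistor fibration over $\mathcal{S}^{2m}$, and it is then transported into $\mathcal{C}\cong\Spin(7)/\U(3)$. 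Finally, pseudoholomorphicity with respect to $J_{\mathcal{C}}$ is deduced from Fox's observation that $I_{\mathcal{C}}$ and $J_{\mathcal{C}}$ agree on the horizontal distribution $\mathcal{H}$, so a horizontal $I_{\mathcal{C}}$-holomorphic curve is automatically $J_{\mathcal{C}}$-pseudoholomorphic. Your plan is a ``from scratch'' version of the same mathematics, and it would buy a self-contained proof at the cost of a nontrivial computation with \eq{subseq1}--\eq{subseq9}.

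Two specific points in your sketch need repair. First, you cannot get pseudoholomorphicity of $\mathbf{c}$ ``from the harmonicity of $\bfu$ exactly as in Proposition \ref{minprop}'': that proposition runs in the opposite direction (from pseudoholomorphic curves in $\mathcal{C}$ down to minimal surfaces), and harmonicity of $\bfu$ alone cannot single out your lift as pseudoholomorphic, since a generic minimal surface admits a multi-parameter family of pseudoholomorphic lifts. The correct logic is the paper's: establish horizontality first, then use the agreement of $I_{\mathcal{C}}$ and $J_{\mathcal{C}}$ on $\mathcal{H}$ together with the $I_{\mathcal{C}}$-holomorphicity of the osculating lift. Second, the obstacle you flag at the end is real but your proposed resolution is not quite right: the difficulty in defining $\mathbf{c}$ globally is not genus-zero monodromy but the isolated points where the derivatives $\partial\bfu,\partial^2\bfu,\partial^3\bfu$ degenerate (and, when $m=2$, the fact that $\bfu$ is not linearly full in $\mathcal{S}^6$, so the osculating data does not by itself fill out a maximal isotropic subspace of $\C^7$). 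The directrix curve is precisely the device that fixes this: being holomorphic, it extends across the degenerate points by removability of singularities, which is how the paper obtains a lift defined on all of $\mathcal{S}^2$.
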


\begin{proof}  Let $\Sigma$ be a non-totally geodesic minimal 2-sphere in $\mathcal{S}^6$.  By the work in \cite{Calabi1}, 
$\Sigma$ is linearly full in a totally geodesic $\mathcal{S}^{2m}$ for $m=2$ or $3$ and 
$\Sigma$ is superminimal, hence isotropic in the sense of Definition \ref{ellipsedfn}.  An idea due 
to Chern \cite{Chern} is to associate to $\Sigma$ its so-called directrix curve $\Xi$, which is a holomorphic curve in
 $\C\P^{2m}$.  In fact, $\Xi$ is a totally isotropic curve in $\C\P^{2m}$; i.e.~if $\xi:\Xi\rightarrow\C\P^{2m}$ then 
$(\xi,\xi)=(\xi^{\prime},\xi^{\prime})=\ldots=(\xi^{m-1},\xi^{m-1})=0$, where $(\,,\,)$ denotes the complex inner product on
$\C\P^{2m}$.  Thus $\Xi$ defines a curve $\Gamma$ in the space 
$H_m$ of totally isotropic $m$-dimensional subspaces of $\C^{2m+1}$ by sending each point in $\Xi$ to the subspace spanned by 
$\xi$ and its first $(m-1)$ derivatives at that point.  Clearly, $H_m$ is contained in a complex Grassmannian and thus in a complex 
projective space.  It follows from the work in \cite{Barbosa} that $H_m\cong\SO(2m+1)/\U(m)$ and that 
 $\Gamma$ is a holomorphic curve in $H_m$ which is horizontal with respect to the fibration of $\SO(2m+1)/\U(m)$ over 
$\mathcal{S}^{2m}$.  

Thus there is a well-defined horizontal lift, which we also call $\Gamma$, of $\Sigma$ to $\mathcal{C}\cong\Spin(7)/\U(3)$ which 
is \emph{holomorphic} with respect to the integrable complex structure $I_{\mathcal{C}}$ 
on $\mathcal{C}$ introduced in Definition \ref{Cdfn}.  
 However, by \cite[Lemma 5.2 \& Corollary 5.3]{Fox}, the complex structures $J_{\mathcal{C}}$ and $I_{\mathcal{C}}$ on
$\mathcal{C}$ agree on the 
horizontal distribution $\mathcal{H}$ given in Definition \ref{horverdfn}, and so $\Gamma$ is pseudoholomorphic with respect to $J_{\mathcal{C}}$. 
\end{proof}

\noindent We can think of the horizontal pseudoholomorphic lift to $\mathcal{C}$ of a minimal 2-sphere in $\mathcal{S}^6$ as a ``twistor lift''.

\begin{remark}
The proof of Theorem \ref{minhorthm} can clearly be generalised to show that any \emph{superminimal} surface in $\mathcal{S}^6$ has a horizontal pseudoholomorphic lift to $\mathcal{C}$.
\end{remark}

From Theorem \ref{minhorthm}, we have a horizontal pseudoholomorphic immersion of $\Gamma\cong\mathcal{S}^2$ in $\mathcal{C}$ which 
then defines local data $\mathfrak{h}$, $\theta$ and $\kappa$ with $\mathfrak{h}=0$.  Since $\Gamma$ is non-totally geodesic, connected and simply
 connected, we have that $\kappa_{21}\neq 0$ and so the parameter $a$ in \eq{deformeq1} defines an $\mathcal{S}^1$-family 
of immersions of $\Gamma$ which all induce the same metric.  Moreover, these immersions will be non-congruent if 
$\Gamma$ does not have constant curvature.  We deduce our main result.

\begin{thm}\label{isommainthm}
Let $\bfu:\mathcal{S}^2\rightarrow\mathcal{S}^6$ be non-totally geodesic and minimal, and recall Example \ref{ruledex} and 
Definition \ref{horverdfn}.  Let $\mathbf{c}_\Gamma:\mathcal{S}^2\rightarrow\mathcal{C}$ be a horizontal pseudoholomorphic lift
 defined by a holomorphic curve $\Gamma$ in $\mathcal{X}(\mathcal{S}^2)$ and let $A=A(\mathcal{S}^2,\bfu,\Gamma)$.  

There is an $\mathcal{S}^1$-family of isometric associative deformations of $A$ in $\mathcal{S}^7$.  Moreover, the family consists of
 non-congruent associative 3-folds if $\bfu:\mathcal{S}^2\rightarrow\mathcal{S}^6$ does not have constant curvature.
\end{thm}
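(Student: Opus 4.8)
The plan is to realise the claimed $\mathcal{S}^1$-family as the integral of the one-parameter deformation \eq{deformeq1} of the structure equations, exploiting the simple-connectedness of $\mathcal{S}^2$ to guarantee that the deformed data actually integrates to genuine embeddings.

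I would begin by applying Theorem \ref{minhorthm} to produce the horizontal pseudoholomorphic lift $\mathbf{c}_\Gamma:\mathcal{S}^2\to\mathcal{C}$ of $\bfu$. Since horizontal pseudoholomorphic curves are linear (Definition \ref{lineardfn}), Proposition \ref{Chenlinearprop} already shows that $A=A(\mathcal{S}^2,\bfu,\Gamma)$ satisfies Chen's equality; this disposes of the assertion about $A$ itself and, as $A$ is non-totally-geodesic, tells us that the second-fundamental-form data $\kappa_{21},\kappa_{31}$ do not both vanish. I would then fix the adapted framing of Proposition \ref{pholostructprop} with $\mathfrak{h}_3=\theta_3=0$ and, by horizontality, $\mathfrak{h}|_\Gamma=0$.

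The central construction is to apply the deformation \eq{deformeq1} for each $a\in\mathbb{R}$. By the invariance noted in $\S$\ref{isom}, the deformed forms still solve the second structure equations \eq{pholostructeq3}--\eq{pholostructeq5}, they leave $\theta_1,\mathfrak{h}_1,\rho,\kappa_{11},\kappa_{22},\kappa_{33}$ untouched, and a short check ($\lambda_3\mapsto e^{ia}\lambda_3$) shows they preserve the linearity condition $\lambda|_\Gamma=0$. The key step is to integrate this deformed data back to a genuine pseudoholomorphic embedding $\mathbf{c}_a:\mathcal{S}^2\to\mathcal{C}$: because $\pi_1(\mathcal{S}^2)=0$, the Maurer--Cartan equations \eq{pholostructeq3}--\eq{pholostructeq5} are not merely necessary but sufficient, so by the fundamental theorem of moving frames in $\mathcal{C}\cong\Spin(7)/\U(3)$ there is such an embedding, unique up to rigid motion. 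This is exactly the point at which the period problem obstructing the positive-genus case evaporates. Each $\mathbf{c}_a$ then yields a ruled associative 3-fold $A_a$ by Proposition \ref{ruledprop}(a) which is linear, hence satisfies Chen's equality by Proposition \ref{Chenlinearprop}; as $e^{ia}$ is $2\pi$-periodic the family is an $\mathcal{S}^1$-family containing $A=A_0$. That the $A_a$ are mutually isometric is then immediate, since by \eq{subseq1}--\eq{subseq9} the induced metric and Levi--Civita connection are encoded entirely by the forms $\theta_1,\mathfrak{h}_1,\rho,\kappa_{11},\kappa_{22},\kappa_{33}$ that the deformation fixes.

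The hard part, which I expect to be the main obstacle, is the non-congruence when $\bfu$ has non-constant curvature. The deformation \eq{deformeq1} rotates the forms $\kappa_{21},\kappa_{31}$ governing the eigendirections of $\II_{A_a}$ in the normal bundle, while fixing the induced metric. If $A_a$ and $A_{a'}$ were related by a rigid motion, I would argue that this motion must carry one second fundamental form to the other while preserving the common induced metric, which — after accounting for the residual gauge freedom in the moving-frame setup — would produce a nontrivial continuous symmetry of $(A,g_A)$ and hence, via the projection $\varpi$ of Definition \ref{ruledprojdfn}, of the minimal surface $\bfu(\mathcal{S}^2)\subseteq\mathcal{S}^6$. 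Since $\bfu$ has non-constant curvature it cannot be homogeneous, so no such symmetry exists and the $A_a$ are pairwise non-congruent for distinct $a$ modulo $2\pi$. Making the passage from the rotated second fundamental form to an honest symmetry of $\bfu$ fully rigorous — disentangling the frame ambiguity of the moving-frame construction from genuine congruences of submanifolds — is the delicate point demanding the most care.
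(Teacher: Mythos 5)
Your proposal is correct and follows essentially the same route as the paper: the paper's proof of this theorem is precisely Theorem \ref{minhorthm} combined with the discussion preceding it in $\S$\ref{isom}, namely the rotation \eq{deformeq1} of $(\mathfrak{h}_2,\theta_2,\kappa_{21},\kappa_{31})$, the observation that simple-connectedness of $\mathcal{S}^2$ makes the structure equations \eq{pholostructeq3}--\eq{pholostructeq5} sufficient as well as necessary (killing the period problem), isometry because the forms encoding the induced metric are fixed, Chen's equality via linearity, and non-congruence from the rotated eigendirections of the second fundamental form when the surface is not homogeneous. Your slightly more elaborated non-congruence argument via a symmetry of $\varpi(A)$ is a reasonable expansion of the paper's terser assertion rather than a different method.
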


\noindent We cannot use the parameter $b$ in this case to give a second $\mathcal{S}^1$-family of deformations of $A$
because one may calculate that $\kappa_{32}=0$ for the horizontal pseudoholomorphic lifts of minimal 2-spheres.

\begin{note}
From Proposition \ref{Chenlinearprop} and Theorems \ref{minhorthm} and \ref{isommainthm} we deduce Theorem \ref{isomthm}.
\end{note}

By \cite[Corollary 4.14]{Barbosa}, the area of a minimal 2-sphere $\Sigma$ in $\mathcal{S}^6$ is always an integer multiple of 
$4\pi$.  Thus, we can define the \emph{degree} $d_\Sigma$ of $\Sigma$ by Area$(\Sigma)=4\pi d_\Sigma$.  This is the same 
as the degree of the associated twistor lift: the horizontal pseudoholmorphic curve in $\mathcal{C}$ viewed 
as a holomorphic curve in a complex projective space as in \cite{Barbosa}.  As already observed, by the work of 
Calabi \cite{Calabi1}, a minimal 2-sphere in $\mathcal{S}^n$ must be linearly full in an even-dimensional 
totally geodesic hypersphere.   
The main results of \cite{Fernandez} and \cite{Loo} show that the moduli space of minimal 2-spheres of degree $d$ which are 
full in $\mathcal{S}^{2m}$ for $m=2$ or $3$ has dimension $2d+m^2$.  Moreover, by \cite[Theorem 6.19]{Barbosa}, every 
integer $d\geq 6$ is the degree of some minimal 2-sphere in $\mathcal{S}^6$, so there is an arbitrarily large moduli space of 
 minimal 2-spheres with the same area.  These spheres will give rise to 1-parameter families of isometric
 associative immersions by Theorem \ref{isommainthm}.
 
We now study Theorem \ref{isommainthm} in some special cases.

\begin{cor}\label{minLegcor}
Given any non-constant curvature minimal $\bfu:\mathcal{S}^2\rightarrow\mathcal{S}^4$, there exists an $\mathcal{S}^1$-family of
 non-congruent isometric minimal Legendrian submanifolds in $\mathcal{S}^7$ satisfying Chen's inequality which are ruled over
 $\mathcal{S}^2$. 
\end{cor}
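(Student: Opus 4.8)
The plan is to deduce the result from Theorem~\ref{isommainthm} together with the characterisation of minimal Legendrian 3-folds furnished by Proposition~\ref{varpiprop}(b). First I would observe that a totally geodesic $\mathcal{S}^4\subseteq\mathcal{S}^6$ is contained in a totally geodesic $\mathcal{S}^5$, so the given minimal immersion $\bfu:\mathcal{S}^2\rightarrow\mathcal{S}^4$ may be regarded as a non-totally-geodesic minimal 2-sphere in $\mathcal{S}^6$. Since every minimal 2-sphere in a round sphere is superminimal, $\bfu$ is in particular isotropic in the sense of Definition~\ref{ellipsedfn}, and by hypothesis it has non-constant curvature. Theorem~\ref{isommainthm}, applied via a horizontal pseudoholomorphic lift $\mathbf{c}_\Gamma$ as in Theorem~\ref{minhorthm}, then produces an $\mathcal{S}^1$-family of isometric associative embeddings of $A=A(\mathcal{S}^2,\bfu,\Gamma)$ into $\mathcal{S}^7$, each satisfying Chen's equality, pairwise non-congruent because $\bfu$ has non-constant curvature, and ruled over $\mathcal{S}^2$ by construction (Example~\ref{ruledex}); in particular Chen's inequality $\delta_A\le 2$ holds with equality.

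It remains to show that every member of this family is minimal Legendrian. The crucial observation is that for each such $A$ one has $\varpi(A)=\bfu(\mathcal{S}^2)\subseteq\mathcal{S}^4\subseteq\mathcal{S}^5$, so it suffices to establish the converse of Proposition~\ref{varpiprop}(b): if $\varpi(A)$ lies in a totally geodesic $\mathcal{S}^5$ then $A$ is minimal Legendrian. This is exactly what the proof of Proposition~\ref{varpiprop}(b) delivers, since there one shows that $\bfb_1$ is constant --- equivalently that $\varpi(A)$ is contained in a totally geodesic $\mathcal{S}^5$ --- if and only if $\beta^7_1=\beta^7_2=\beta^7_3=0$ and $\gamma=\alpha+\omega$, which are precisely the minimal Legendrian conditions of Example~\ref{minLegsex}. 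In the frame adapted to Chen's equality the second fundamental form \eq{Chensffeq2} already forces $\beta^4=\beta^7=0$, so $\bff_4$ and $\bff_7$ play no role in $\II_A$; since $\bfu(\mathcal{S}^2)\subseteq\mathcal{S}^4$ possesses two everywhere-constant normal directions inside $\mathcal{S}^6$, I would align $\bff_7$, and hence $\bfb_1=\bfx\times\bff_7$, with one of them, so that $\bfb_1$ is constant and the remaining condition $\gamma=\alpha+\omega$ follows.

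Finally I would verify that this holds uniformly across the $\mathcal{S}^1$-family rather than merely for the initial embedding. The family is generated by the rotation \eq{deformeq1}, which rescales $(\mathfrak{h}_2,\theta_2,\kappa_{21},\kappa_{31})$ by $e^{ia}$ while fixing $\mathfrak{h}_1$, $\theta_1$ and the remaining Levi--Civita data. Because in the Chen's equality case all the components $\beta^a_3$ vanish, the structure equation \eq{surfeq1} reduces to $\d\bfu=-\bft_1(\alpha_2-\omega_2)+\bft_2(\alpha_1-\omega_1)$, so the projected surface $\varpi(A)$ is governed solely by the forms $\alpha_j-\omega_j$, equivalently by $\theta_1$ through \eq{subseq2}, which is left untouched by \eq{deformeq1}. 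Hence $\varpi$ is constant along the family, every member projects to $\bfu(\mathcal{S}^2)\subseteq\mathcal{S}^5$, and every member is therefore minimal Legendrian; consistently, for minimal Legendrian $A$ one has $\kappa_{32}=-\mathfrak{h}_1$, so the second deformation \eq{deformeq2} is trivial and the family is genuinely an $\mathcal{S}^1$-family. I expect the main obstacle to be the frame-compatibility step of the second paragraph: checking that the normal framing forced by Chen's equality can be simultaneously adapted so that the constant normal direction of $\bfu(\mathcal{S}^2)\subseteq\mathcal{S}^4$ coincides with $\bfb_1$, thereby upgrading the automatic vanishing $\beta^7_j=0$ to the full minimal Legendrian pair $\beta^7_j=0$, $\gamma=\alpha+\omega$.
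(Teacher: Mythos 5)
Your overall architecture matches the paper's: apply Theorem \ref{isommainthm} to the horizontal lift supplied by Theorem \ref{minhorthm}, and then argue that every member of the resulting $\mathcal{S}^1$-family is minimal Legendrian. The first part of your argument (existence of the family, non-congruence, Chen's equality, ruledness) is fine. The problem is the mechanism you use for the second part. The paper's proof is direct: the minimal Legendrian conditions of Example \ref{minLegsex} translate, via \eq{subseq1}--\eq{subseq9}, into conditions on $\kappa_{32}$, $\mathfrak{h}_1$, $\kappa_{31}$ and the diagonal of $\kappa$, all of which are left fixed (or preserved, in the case $\kappa_{31}=0$) by the transformation \eq{deformeq1}; hence once the initial horizontal lift is minimal Legendrian, so is every member of the family. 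You instead route the argument through $\varpi$ and a \emph{converse} of Proposition \ref{varpiprop}(b): that $\varpi(A)\subseteq\mathcal{S}^5$ forces $A$ to be minimal Legendrian. That converse is not what Proposition \ref{varpiprop}(b) or its proof establishes. The proof there shows $\d\bfb_1=0$ if and only if $\beta^7_1=\beta^7_2=\beta^7_3=0$ and $\gamma=\alpha+\omega$; but $\varpi(A)\subseteq\mathcal{S}^5$ only says that \emph{some} unit normal field along $\varpi(A)$ is constant, not that $\bfb_1$ is. To close the implication you must show (i) that the constant normal is orthogonal to the first normal space of $\varpi(A)$ (which is spanned by $\bfn_1,\bfn_2$ in the Chen-adapted frame), so that it lies in the plane spanned by $\bfb_1,\bfb_2$, and (ii) that the residual frame freedom --- a rotation of the $(\bff_4,\bff_7)$-plane, which through the $\GG_2$ relations $\bff_5=\bfe_1\times\bff_4$, $\bff_6=\bff_7\times\bfe_1$ drags the $(\bff_5,\bff_6)$-plane with it --- can be used pointwise to set $\bfb_1$ equal to that constant normal while keeping $\II_A$ in the form \eq{Chensffeq}. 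You explicitly flag (ii) as "the main obstacle" and do not resolve it, and (i) is not addressed at all; as written, the step from "$\varpi(A)$ lies in a totally geodesic $\mathcal{S}^5$" to "$A$ is minimal Legendrian" is a genuine gap.

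There is a secondary soft spot: your claim that $\varpi$ is "constant along the family" because $\d\bfu$ is governed only by $\alpha_j-\omega_j$ ignores that the frame vectors $\bft_j,\bfn_j,\bfb_j$ in \eq{surfeq1}--\eq{surfeq7} also evolve under equations whose coefficients ($\beta^5_j,\beta^6_j$, i.e.\ $\kappa_{21}$) are rotated by \eq{deformeq1}; what one actually gets is that the projected surfaces are congruent, which suffices, but the assertion as stated is not justified. Both difficulties evaporate if you adopt the paper's route: check once that \eq{deformeq1} fixes $\mathfrak{h}_1$, $\theta_1$, $\kappa_{32}$ and preserves $\kappa_{31}=0$, so the conditions of Example \ref{minLegsex} propagate from the initial horizontal lift to the whole family.
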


\begin{proof}
By Proposition \ref{varpiprop}(b) and its proof, we see that a horizontal
 lift of a minimal $\mathcal{S}^2$ in $\mathcal{S}^4\subseteq\mathcal{S}^6$
 will define a minimal Legendrian submanifold $A$ of $\mathcal{S}^7$ ruled over
 $\mathcal{S}^2$.  By Theorem \ref{isommainthm} we have an $\mathcal{S}^1$-family of isometric associative 3-folds containing $A$.  However, since the transformation \eq{deformeq1} leaves the conditions given in Example \ref{minLegsex} invariant, this family will consist of minimal Legendrian submanifolds.
\end{proof}

In a similar manner, using Example \ref{Lagsex}, Proposition \ref{varpiprop}(c) and Theorem \ref{isommainthm} we have the following result.

\begin{cor}
Given any non-constant curvature pseudoholomorphic $\bfu:\mathcal{S}^2\rightarrow\mathcal{S}^6$,
there exists an $\mathcal{S}^1$-family of non-congruent isometric Lagrangian submanifolds in $\mathcal{S}^6$ satisfying Chen's equality which are ruled over $\mathcal{S}^2$.
\end{cor}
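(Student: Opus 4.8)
The plan is to run the proof of Corollary \ref{minLegcor} with Lagrangian geometry in place of minimal Legendrian geometry, substituting Example \ref{Lagsex} for Example \ref{minLegsex} and Proposition \ref{varpiprop}(c) for Proposition \ref{varpiprop}(b). First I would record that a pseudoholomorphic curve in the nearly K\"ahler $\mathcal{S}^6$ is automatically minimal, and that non-constant curvature rules out its being totally geodesic; hence $\bfu:\mathcal{S}^2\rightarrow\mathcal{S}^6$ meets the hypotheses of Theorem \ref{minhorthm} and admits a horizontal pseudoholomorphic lift $\mathbf{c}_\Gamma:\mathcal{S}^2\rightarrow\mathcal{C}$, determined by a holomorphic curve $\Gamma$ in $\mathcal{X}(\mathcal{S}^2)$ as in Example \ref{ruledex}. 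This produces a ruled associative $3$-fold $A=A(\mathcal{S}^2,\bfu,\Gamma)$ with $\varpi(A)=\bfu(\mathcal{S}^2)$.

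The crucial step is to show that, because $\varpi(A)=\bfu(\mathcal{S}^2)$ is pseudoholomorphic, $A$ lies in a totally geodesic $\mathcal{S}^6$ and is therefore a Lagrangian submanifold ruled over $\mathcal{S}^2$; this is the converse of the implication recorded in the proof of Proposition \ref{varpiprop}(c). I would verify it directly from the first structure equation \eq{normeq}: the $3$-fold $A$ lies in a totally geodesic $\mathcal{S}^6$ exactly when the normal vector $\bff_7$ is a fixed element of $\R^8$, and reading $\d\bff_7=0$ off \eq{normeq} should force $\beta^7_1=\beta^7_2=\beta^7_3=0$ together with $\gamma=\alpha-\omega$, which are precisely the Lagrangian conditions of Example \ref{Lagsex}. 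This is the exact analogue of the way $\gamma=\alpha+\omega$ was deduced from $\d\bfb_1=0$ for the minimal Legendrian case in the proof of Proposition \ref{varpiprop}(b).

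Granting this, $A$ is a ruled, non-simple Lagrangian submanifold, so Theorem \ref{isommainthm} at once furnishes an $\mathcal{S}^1$-family of isometric associative $3$-folds containing $A$, all satisfying Chen's equality and non-congruent because $\bfu$ has non-constant curvature. To see that every member is again Lagrangian I would repeat the invariance argument from the proof of Corollary \ref{minLegcor}: the deformation \eq{deformeq1} generating the family rescales only $(\mathfrak{h}_2,\theta_2,\kappa_{21},\kappa_{31})$ by a unit-modulus constant, fixing $\mathfrak{h}_1,\theta_1,\rho,\kappa_{11},\kappa_{22},\kappa_{33}$, and in the Lagrangian frame one has $\kappa_{32}=\theta_1$; thus the conditions $\gamma=\alpha-\omega$ and $\beta^7_1=\beta^7_2=\beta^7_3=0$ of Example \ref{Lagsex} are preserved along the whole family, exactly as \eq{deformeq1} preserved the corresponding conditions of Example \ref{minLegsex}.

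The main obstacle is the crucial middle step. Horizontality of the lift does not by itself separate Lagrangian from minimal Legendrian geometry, since both arise as horizontal lifts, so the argument must genuinely exploit the pseudoholomorphicity of $\bfu(\mathcal{S}^2)$ to fix the adapted frame and select the sign $\gamma=\alpha-\omega$ rather than $\gamma=\alpha+\omega$. One must also confirm that this is the normalisation in which $\kappa_{32}=\theta_1$ is a Levi--Civita quantity, so that \eq{deformeq1}, and not \eq{deformeq2}, is the relevant deformation; this is what guarantees that the invariance argument of the preceding paragraph applies verbatim and that the deformed submanifolds remain Lagrangian rather than merely associative.
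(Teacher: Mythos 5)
Your proposal is correct and takes essentially the same route as the paper: the paper's proof of this corollary is precisely the one-line observation that the argument of Corollary \ref{minLegcor} carries over with Example \ref{Lagsex} in place of Example \ref{minLegsex}, Proposition \ref{varpiprop}(c) in place of Proposition \ref{varpiprop}(b), and the identity $\kappa_{32}=\theta_1$ guaranteeing that the deformation \eq{deformeq1} preserves the Lagrangian conditions $\beta^7_1=\beta^7_2=\beta^7_3=0$, $\gamma=\alpha-\omega$ (equivalently $\d\bff_7=0$ read off from \eq{normeq}). The ``crucial middle step'' you flag is treated in the paper at exactly the level of detail you supply — via the structure equations \eq{surfeq1}--\eq{surfeq7} and the equivalence $\d\bff_7=0$ if and only if the conditions of Example \ref{Lagsex} hold, with both horizontality and pseudoholomorphicity of $\bfu$ needed, as you correctly note — so your account matches the intended argument.
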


\begin{remark}
One can see this result directly from the structure equations for Lagrangians satisfying Chen's equality given in \cite[$\S$6.4]{LotayLag}.  The Lagrangians are tubes of radius $\frac{\pi}{2}$ in the second normal bundle of 
the pseudoholomorphic $\mathcal{S}^2$.
\end{remark}

As a final comment, it would be interesting to know whether a minimal 2-sphere could be lifted to a pseudoholomorphic curve 
so that the parameter $b$ in \eq{deformeq2} defines a second independent $\mathcal{S}^1$-family of deformations. 
Necessarily such a 2-sphere would have to be linearly full in
 $\mathcal{S}^6$ but not pseudoholomorphic.  Given this minimal $\mathcal{S}^2$ and its pseudoholomorphic lift, we would then be able
 to define a 2-torus family of isometric associative embeddings using \eq{deformeq1}-\eq{deformeq2}.

\begin{ack}
The author is indebted to Robert Bryant for enlightening discussions, insight and helpful advice.  
He would also like to thank Daniel Fox and Mark Haskins for useful conversations and comments, and University College, Oxford and Imperial College, London for hospitality during the course of this project.  

The author is supported by an EPSRC Career Acceleration Fellowship.  
\end{ack}




\begin{thebibliography}{99}


\bibitem{Barbosa} J.~L.~M.~Barbosa, {\it On Minimal Immersions of $\mathcal{S}^2$ in $\mathcal{S}^{2m}$}, Trans. Amer. Math. Soc. 
{\bf 210} (1975), 75--106.

\bibitem{Blair} D.~E.~Blair, F.~Dillen, L.~Verstraelen and L.~Vrancken, {\it Calabi Curves as Holomorphic
Legendre Curves and Chen's Inequality}, Kyungpook Math. J. {\bf 35} (1996),
 407--416.



\bibitem{BryantOct} R.~L.~Bryant, {\it Submanifolds and Special Structures on the Octonians}, J. Differential Geom. 
{\bf 17} (1982), 185--232.

\bibitem{Bryant4sphere} R.~L.~Bryant, {\it Conformal and Minimal Immersions of Compact Surfaces 
into the 4-Sphere}, J. Differential Geom. {\bf 17} (1982), 455--473.


\bibitem{Bryantaustere} R.~L.~Bryant, {\it Some Remarks on the Geometry of Austere Manifolds}, Bol. Soc. Brasil. Mat. (N.S.) {\bf 21} (1991), 133–-157.

 
\bibitem{BryantSL} R.~L.~Bryant, {\it Second Order Families of Special Lagrangian 3-folds in $\C^3$}, 
in Perspectives in Riemannian Geometry, CRM Proceedings and Lecture Notes, edited by Vestislav Apostolov, Andrew Dancer, Nigel 
Hitchin, and McKenzie Wang, {\bf 40} (2006), American Mathematical Society. 

 
\bibitem{BCG3} R.~L.~Bryant, S.-S.~Chern, R.~B.~Gardner, H.~L.~Goldschmidt and P.~A.~Griffiths, {\it Exterior Differential Systems},  
MSRI Publications {\bf 18}, Springer-Verlag, New York, 1991.

\bibitem{BryantHarvey} R.~L.~Bryant and R.~Harvey, {\it Submanifolds in Hyper-K\"ahler Geometry}, J. Amer. Math. Soc. {\bf 2} (1989), 1--31.

 
\bibitem{Calabi1} E.~Calabi, {\it Minimal Immersions of Surfaces in Euclidean Spheres}, J. Differential Geom. {\bf 1} (1967), 111--125.

 
 \bibitem{Chen} B.~Y.~Chen, {\it Some Pinching and Classification Theorems for Minimal Submanifolds}, Arch. Math. (Basel) {\bf 60}
 (1993), 568--578.
 
\bibitem{Chern} S.~S.~Chern, {\it On the Minimal Immersions of the Two Sphere in a Space of Constant Curvature}, in Problems in Analysis:
A Symposium in Honor of Salomon Bochner, pp.~27--40, edited by R.~C.~Gunning, Princeton University Press, Princeton, NJ, 1970.



\bibitem{Dillen2} F.~Dillen and L.~Vrancken, {\it Totally Real Submanifolds in $\mathcal{S}^6(1)$ Satisfying Chen's Equality}, 
Trans. Amer. Math. Soc. {\bf 348} (1996), 1633--1646.




\bibitem{Fernandez} L.~Fern\'andez, {\it The Dimension of the Space of Harmonic 2-Spheres in the 6-Sphere}, Bull. London 
Math. Soc. {\bf 38} (2006), 156--162.


\bibitem{Foxthesis} D. Fox, {\it Second Order Families of Coassociative 4-folds}, Doctoral Dissertation, Duke University, 2005.

\bibitem{Fox} D.~Fox, {\it Cayley Cones Ruled by 2-Planes: Desingularization and Implications of the Twistor Fibration}, 
 Comm. Anal. Geom. {\bf 16} (2008), 937--968.

\bibitem{Nearlypar} Th.~Friedrich, I.~Kath, A.~Moroianu and U.~Semmelmann, {\it On Nearly
Parallel $\GG_2$-Structures}, J. Geom. Phys. {\bf 23} (1997), 259--286.



\bibitem{Gray} A.~Gray, {\it Vector Cross Products on Manifolds}, Trans. Amer. Math. Soc. {\bf 141} (1969), 465--504.

\bibitem{Grayweak} A.~Gray, {\it Weak Holonomy Groups}, Math. Z. {\bf 123} (1971), 290--300.


\bibitem{GuPries} W.~Gu and C.~Pries, {\it Examples of Cayley Manifolds in $\R^8$}, Houston J. Math. {\bf 30} (2004), 55--87.

\bibitem{HarLaw} R.~Harvey and H.~B.~Lawson, {\it Calibrated Geometries},
Acta Math. {\bf 148} (1982), 47--152.

\bibitem{Haskins} M.~Haskins, {\it Special Lagrangian Cones},  Amer. J. Math. {\bf 126} (2004), 845--871.

\bibitem{HaskinsKap} M.~Haskins and N.~Kapouleas, {\it Gluing Constructions of Special Lagrangian Cones}, in
Handbook of Geometric Analysis, No.~1, pp.~77--145, Adv. Lect. Math. (ALM) {\bf 7}, Int.~Press, Somerville, MA, 2008.

\bibitem{HaskinsKap2} M.~Haskins and N.~Kapouleas, {\it Closed Twisted Products and $\SO(p)\times \SO(q)$-invariant 
Special Lagrangian Cones}, Comm. Anal. Geom. {\bf 20} (2012), 95--162.

\bibitem{Ikawa} O. Ikawa, T. Sakai and H. Tasaki, {\it Weakly Reflective Submanifolds and Austere Submanifolds}, 
J. Math. Soc. Japan {\bf 61} (2009), 437--481.

\bibitem{IonelSL} M. Ionel, {\it Second Order Families of Special Lagrangian Submanifolds in $\C^4$}, J. Differential Geom. {\bf 65} (2003), 211--272. 
\bibitem{JoyceRiem} D.~D.~Joyce, {\it Riemannian Holonomy Groups and Calibrated Geometry}, 
Oxford Graduate Texts in Mathematics {\bf 12}, OUP, Oxford, 2007. 

\bibitem{KarigiannisMinoo} S.~Karigiannis and M.~Min-Oo, {\it Calibrated Subbundles in Noncompact Manifolds of Special Holonomy}, Ann. Global Anal. Geom. {\bf 28} (2005), 371–-394.



\bibitem{Loo} B.~Loo, {\it The Space of Harmonic Maps of $\mathcal{S}^2$ into $\mathcal{S}^4$}, Trans. Amer. Math. Soc. {\bf 313} 
(1989), 81--102.

\bibitem{Lotaysym} J.~D.~Lotay, \textit{Calibrated Submanifolds of $\R^7$
 and $\R^8$ with Symmetries},\\ 
Q.~J.~Math.~{\bf 58} (2007), 53--70.

\bibitem{LotayLag} J.~D.~Lotay, \textit{Ruled Lagrangian Submanifolds of the 6-Sphere}, Trans. Amer. Math. Soc.
{\bf 363} (2011), 2305--2339.
 

\bibitem{Marshall} S.~P.~Marshall, \textit{Some Special Lagrangian Submanifolds of $\C^m$}, dissertation, University of Oxford, 1999.

\bibitem{Mashimo} K.~Mashimo, {\it Homogeneous Totally Real Submanifolds of $\mathcal{S}^6$}, Tsukuba J. Math. {\bf 9} (1985), 185--202.


\bibitem{ONeill} B.~O'Neill, {\it Isotropic and K\"ahler Immersions}, Canad. J. Math. {\bf 17} (1965), 907--915.

\bibitem{RoblesSalur} C.~Robles and S.~Salur, {\it Calibrated Associative and Cayley Embeddings}, Asian J. Math. {\bf 13} (2009), 287–-305.

\bibitem{Salamon2} S.~M.~Salamon, {\it Harmonic and Holomorphic Maps}, in Geometry Seminar `Luigi Bianchi' II - 1984, pp.~161--224, 
Lecture Notes in Mathematics {\bf 1164}, Springer Berlin, Heidelberg, 1985.


\end{thebibliography}
\end{document}